\newcommand{\SL}{\mathrm{SL}}
\newcommand{\GL}{\mathrm{GL}}
\newtheorem{thm}{Theorem}[section]
\newtheorem{lem}[thm]{Lemma}
\newtheorem{prop}[thm]{Proposition}
\newtheorem{coro}[thm]{Corollary}
\theoremstyle{remark}
\newtheorem{rem}[thm]{Remark}
\theoremstyle{definition}
\newtheorem{defn}[thm]{Definition}
\numberwithin{equation}{section}
\def\iddots{\mathinner{\mkern1mu\raise\p@
	\hbox{.}\mkern2mu\raise4\p@\hbox{.}\mkern2mu
	\raise7\p@\vbox{\kern7\p@\hbox{.}}\mkern1mu}}
\def\adots{\mathinner{\mkern2mu\raise\p@\hbox{.}
 \mkern2mu\raise4\p@\hbox{.}\mkern1mu
 \raise7\p@\vbox{\kern7\p@\hbox{.}}\mkern1mu}}
\title[Mirabolic subgroup]{The action of a mirabolic subgroup on a symmetric variety}
\author{Hengfei LU}
\address{Faculty of Mathematics, University of Vienna, Oskar-Morgenstern-Platz 1, Wien 1090, Austria}
\email{hengfei.lu@univie.ac.at}
\begin{document}

\begin{abstract} Let $F$ be a  local field of characteristic zero. Let $E$ be a quadratic field extension of $F$.
 We show that any $P_E$-invariant 
linear functional on a $\GL_n(E)$-distinguished irreducible smooth admissible representation of $\GL_{2n}(F)$ is also $\GL_n(E)$-invariant, where $P_E$ is the standard mirabolic subgroup of $\GL_{n}(E)$.
\end{abstract}
	\keywords  {distinction problems, D-modules, invariant tempered generalized functions, Weil representation} 
\subjclass[2010]{22E50}
\maketitle
\tableofcontents

\section{Introduction}
Let $F$ be a local field of characteristic zero. Let $E=F[\delta]$ be a quadratic field extension of $F$ with $\delta^2\in F^\times\setminus(F^\times)^2$.
Let $Mat_{n,n}(F)$ (resp. $Mat_{n,n}(E)$) denote the set of all $n\times n$ matrices over $F$ (resp. $E$). 
Let $\GL_n(F)$ act on $Mat_{n,n}(F)$
by inner conjugation. Let $P_F$ be the mirabolic subgroup of $\GL_n(F)$ consisting of matrices with last row vector $(0,\cdots,0,1)$. Bernstein \cite{bernstein1984} proved that any $P_F$-invariant distribution on $Mat_{n,n}(F)$ must be $\GL_n(F)$-invariant when $F$ is non-archimedean. Baruch \cite{baruch2003annals} proved that any $P_F$-invariant eigendistribution (with respect to the center of the the universal enveloping algebra of $\mathfrak{gl}_n(F)$) is $\GL_{n}(F)$-invariant when $F$ is archimedean. It has been proved in \cite{AG2009selecta,sunzhu2012annals} that any $P_F$-invariant distribution is also  $\GL_{n}(F)$-invariant.
 It is expected that there is a more general phenomenon related to the mirabolic subgroup $P_F$. Let $H_{p,n-p}=\GL_p(F)\times \GL_{n-p}(F)$. Gurevich \cite{maxmirabolic} investigeted the role of the mirabolic subgroup $P_F$ of $\GL_n(F)$ on the symmetric variety $\GL_n(F)/H_{p,n-p}$ when $F$ is non-archimedean. Then Gurevich proved that any $H_{1,n-1}\cap P_F$-invariant linear functional on an $H_{1,n-1}$-distinguished irreducible smooth representation of $\GL_n(F)$ is also $H_{1,n-1}$-invariant (see \cite[Theorem 1.1]{maxmirabolic}). It is expected that it holds for all $H_{p,n-p}$. 
The case when $n-p=p+1$ has been verified in \cite{lu2020} if $F$ is non-archimedean (see \cite[Theorem 6.3]{lu2020}). 
Let $P_E$ denote the mirabolic subgroup of $\GL_n(E)$.
Then $P_E\cap\GL_n(F)=P_F$. 
Offen  and Kemarsky  proved that any $P_E\cap \GL_n(F)$-invariant linear functional on a $\GL_n(F)$-distinguished irreducible smooth representation of $\GL_n(E)$ is also $\GL_n(F)$-invariant. (See \cite[Theorem 3.1]{offen2011} for the p-adic case and \cite[Theorem 1.1]{kemarsky} for the archimedean case.)
This paper studies the role of the mirabolic subgroup $P$ of $\GL_{2n}(F)$ on the symmetric variety $\GL_{2n}(F)/\GL_n(E)$ and $P\cap \GL_{n}(E)=P_E$.

There is a group embedding $\GL_{2n}(F)\hookrightarrow\GL_{2n}(E)$ such that each element in the image of $\GL_{2n}(F)$ is of the form
\[\begin{pmatrix}
A& B\\ \bar{B}&\bar{A}
\end{pmatrix} \]
where $A,B\in Mat_{n,n}(E)$ and 
$ A\mapsto \bar{A}$ denotes the Galois action on $A$.
 Let $\theta$ be the involution of $\GL_{2n}(F)$ given by
\[ \theta:g\mapsto \begin{pmatrix}
\delta\\&-\delta
\end{pmatrix}g\begin{pmatrix}
\delta\\&-\delta
\end{pmatrix}^{-1} \]
for $g\in\GL_{2n}(F)$.
  Then the fixed points of $\theta$ in $\GL_{2n}(F)$ coincide with $\GL_n(E)$. Denote by $\mathfrak{gl}_{2n}(F)$ the Lie algebra of $\GL_{2n}(F)$. Under the above embedding $\GL_{2n}(F)\hookrightarrow \GL_{2n}(E)$, any $g$ in $\mathfrak{gl}_{2n}(F)$ is of the form
  \[\begin{pmatrix}
  a&b\\ \bar{b}&\bar{a}
  \end{pmatrix} \]
where $a,b\in Mat_{n,n}(E)$. Let $\mathfrak{p}$ denote the Lie algebra of $P$ which is given by
\[\Biggl\{ \begin{pmatrix}
a&b\\ \bar{b}&\bar{a}
\end{pmatrix}:\begin{matrix}
a=(a_{i,j}),b=(b_{i,j})\mbox{ for }i,j\in\{1,2,\cdots,n \}\\
a_{n,j}=\bar{b}_{n,j}\mbox{ for }j\in\{1,2,\cdots,n\}
\end{matrix} \Biggr\}. \]
Then $\mathfrak{p}\cap \mathfrak{gl}_n(E)=\mathfrak{p}_E$, where $\mathfrak{p}_E$
is the Lie algebra of $P_E=P\cap\GL_{n}(E)$.

 The main result in this paper is the following:
\begin{thm}
	\label{thmA} 
	Any $P_E$-invariant linear functional on a $\GL_n(E)$-distinguished irreducible smooth admissible representation $\pi$ of $\GL_{2n}(F)$ is also $\GL_n(E)$-invariant.
\end{thm}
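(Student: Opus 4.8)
The plan is to follow the strategy of Bernstein \cite{bernstein1984}, Baruch \cite{baruch2003annals}, Aizenbud--Gourevitch and Sun--Zhu \cite{AG2009selecta,sunzhu2012annals}, and its symmetric-variety version due to Gurevich \cite{maxmirabolic} and used in \cite{lu2020,offen2011,kemarsky}: reduce the statement about linear functionals to one about invariant generalized functions on the symmetric variety $X=\GL_{2n}(F)/\GL_n(E)$ --- on which the mirabolic $P$ of $\GL_{2n}(F)$ acts with finitely many orbits --- and then analyze these orbit by orbit. For the reduction, Frobenius reciprocity gives $\Hom_{P_E}(\pi,\mathbb{C})\cong\Hom_{\GL_n(E)}\big(\pi,C^\infty(E^n\setminus\{0\})\big)$ (since $P_E$ is the stabilizer in $\GL_n(E)$ of the row vector $(0,\dots,0,1)$), under which $\Hom_{\GL_n(E)}(\pi,\mathbb{C})$ is exactly the subspace landing in the constant functions; thus Theorem \ref{thmA} amounts to the vanishing of $\Hom_{\GL_n(E)}\big(\pi,C^\infty(E^n\setminus\{0\})/\mathbb{C}\big)$. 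Using that $\pi$ --- equivalently $\widetilde\pi$ --- is $\GL_n(E)$-distinguished, one realizes $\pi$ as a quotient of $\mathcal{S}(X)$, and a relative Gelfand--Kazhdan argument turns this vanishing into the assertion that a $\GL_n(E)$-invariant generalized function on $X\times(E^n\setminus\{0\})$ lying in the $\pi$-eigenspace and orthogonal to the constants in the second variable must vanish --- equivalently, after the usual localization near a $\GL_n(E)$-fixed point of $X$, where $X$ is modelled on its tangent space $\mathfrak{s}\cong Mat_{n,n}(E)$ with $\GL_n(E)$ acting by the twisted conjugation $a\cdot b=ab\bar a^{-1}$, that every $P_E$-invariant generalized function on $\mathfrak{s}$ in the relevant eigenspace is $\GL_n(E)$-invariant. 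Here ``generalized function'' means distribution in the $p$-adic case and tempered generalized function in the archimedean case.

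\textbf{Orbit analysis.} Next I would stratify $\mathfrak{s}$ by the rank of $b$, which for this action is a union of $\GL_n(E)$-orbits. On the open stratum $\GL_n(E)$ has an open orbit and the $P_E$-stabilizer of a point differs from its $\GL_n(E)$-stabilizer only by a unipotent piece, so there $P_E$-invariance forces $\GL_n(E)$-invariance by a short computation with the unipotent action. The content sits on the lower strata: by descent of invariant generalized functions to a normal slice at a representative point, the contribution of the corank-$r$ stratum is governed by invariant-distribution problems for symmetric pairs of the same Galois type but smaller rank, namely $\GL_{2m}(F)/\GL_m(E)$ with $m<n$, and, at the boundary, for the Galois pair $\GL_m(E)/\GL_m(F)$. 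The former are closed by induction on $n$; the latter is precisely the setting of the Offen--Kemarsky theorem, \cite[Theorem 3.1]{offen2011} and \cite[Theorem 1.1]{kemarsky}. There remains the deepest stratum, concentrated on a null-cone type subvariety and ultimately on the origin, where one must exclude $P_E$-invariant generalized functions that are not $\GL_n(E)$-invariant.

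\textbf{The deepest stratum.} Over a $p$-adic field this is handled by Bernstein's localization principle and Frobenius descent --- reducing to the inductive cases above --- together with a homogeneity (weight) argument ruling out a nonzero invariant distribution supported at the origin alone. Over an archimedean field one first shows the generalized functions in play are tempered, by a Harish-Chandra regularity argument using that $\pi$ has an infinitesimal character (as in \cite{baruch2003annals}), and then invokes the theory of holonomic $D$-modules: the systems cut out by the $P_E$-invariance equations are holonomic, and a bound on their characteristic varieties, in the spirit of \cite{AG2009selecta}, constrains the admissible supports. The Weil representation enters here. Equip $\mathfrak{s}=Mat_{n,n}(E)$ with the non-degenerate symmetric $F$-bilinear form $\langle b,b'\rangle=\mathrm{tr}_{E/F}\,\mathrm{tr}(bb')$ and let $\mathcal{F}$ be the attached Fourier transform on $\mathcal{S}(\mathfrak{s})$, which is realized by the Weil representation of the metaplectic group associated to the quadratic space $(\mathfrak{s},\langle\,,\,\rangle)$ and $\psi$. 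Since $\mathcal{F}$ intertwines the action of $a\in\GL_n(E)$ with that of $\bar a$ (the Jacobian being trivial) and the mirabolic satisfies $\overline{P_E}=P_E$, the transform $\mathcal{F}$ carries $P_E$-invariant generalized functions to $P_E$-invariant ones. Hence a candidate $P_E$-invariant generalized function and its Fourier transform are simultaneously constrained, and a Fourier-theoretic argument in the style of \cite{AG2009selecta,sunzhu2012annals} forces the support into a proper conical subvariety; combined with the descent already performed, this leaves only the origin, which the homogeneity argument excludes modulo the $\GL_n(E)$-invariant generalized functions accounted for by the distinction of $\pi$.

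\textbf{Main obstacle and conclusion.} The principal difficulty is the archimedean analysis on the deepest stratum: proving temperedness, bounding the characteristic varieties of the relevant holonomic $D$-modules, and exploiting the metaplectic Fourier symmetry to eliminate the exotic invariant generalized functions; the $p$-adic case is comparatively soft, resting on Bernstein's theory, descent, and induction. Combining the reduction, the orbit analysis, the inductive hypothesis, and \cite{offen2011,kemarsky}, one concludes that every $P_E$-invariant functional on a $\GL_n(E)$-distinguished $\pi$ belongs to $\Hom_{\GL_n(E)}(\pi,\mathbb{C})$; since the inclusion $\Hom_{\GL_n(E)}(\pi,\mathbb{C})\subseteq\Hom_{P_E}(\pi,\mathbb{C})$ is automatic, the two spaces coincide, which is Theorem \ref{thmA}.
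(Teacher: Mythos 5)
Your global architecture (reduce the statement about functionals to one about invariant generalized functions on $\GL_{2n}(F)/\GL_n(E)$ via a Gelfand--Kazhdan type embedding and distinction of $\pi^\vee$, then Harish--Chandra descent, then Fourier/homogeneity and $D$-module arguments on the null cone, with the Weil representation entering) is indeed the paper's framework, and your final deduction is a legitimate variant of the step the paper performs with the injection $A_\pi$ in Lemma \ref{ker:cor}. However, the decisive mechanism that upgrades $P_E$-invariance to $\GL_n(E)$-invariance is missing from your outline. The paper never analyzes $P_E$-orbits at all: it proves the stronger equivariant vanishing of Theorem \ref{vanishing:I}, namely $\mathscr{C}(L_n)^{\tilde{H}_{n-1},\chi}=0$ --- every generalized function invariant under the \emph{reductive} subgroup $\GL_{n-1}(E)\subset P_E$ is automatically invariant under the (twisted) transposition $\sigma$ --- and then concludes Theorems \ref{thmB}, \ref{thm:group} and \ref{thmA} because $P_E$ and its transpose $P_E^t$ generate $\GL_n(E)$. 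Your substitute, a stratum-by-stratum analysis of $P_E$-invariance on $\mathfrak{s}\cong Mat_{n,n}(E)$, begins with a false premise: under the twisted conjugation $b\mapsto ab\bar a^{-1}$ there is no open $\GL_n(E)$-orbit (the coefficients of the characteristic polynomial of $b\bar b$ give an $n$-parameter family of generic orbits), so the claim that ``on the open stratum $\GL_n(E)$ has an open orbit and the $P_E$-stabilizer differs only by a unipotent piece'' does not hold, and no argument is given for why $P_E$-invariance forces $\GL_n(E)$-invariance on the regular set. The sign-character formulation is also what makes descent function correctly: at a slice one inherits equivariant vanishing problems for the descendants listed in Lemma \ref{descendant}, which are settled by the known ``invariant $\Rightarrow$ transpose-invariant'' theorems; quoting the group-level Offen--Kemarsky statement ``$P$-invariant $\Rightarrow$ $G$-invariant'' at interior strata is not the statement that actually descends.

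The second gap is on the deepest stratum, which is where the paper does its genuinely new work. The problematic support is the whole nilpotent cone $\mathcal{N}_n$, not just the origin, and orbits inside the cone are not closed, so localization and Harish--Chandra descent cannot finish there. In the $p$-adic case the paper must run Chen--Sun's method (Proposition \ref{fourier}): Frobenius descent along each nilpotent orbit using an $\mathfrak{sl}_2$-triple, an explicit eigenvalue computation in which the Weil representation contributes the factor $|t|^{4-4n}$ from $V\oplus V^\ast$ (Lemma \ref{lem:key}), and a contradiction between the homogeneity theorem (Theorem \ref{duke}) and the inequality $2(n-1)^2+3<tr(2-\mathbf{h})|_{L_{n-1}^{\mathbf{f}}}<4(n-1)^2$. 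In the real case it shows the singular support would have to be coisotropic while being contained in a set with no nonempty coisotropic subvarieties (Proposition \ref{coisotropic}, via non-distinguished orbits). Your sketch compresses all of this into ``a homogeneity argument ruling out an invariant distribution supported at the origin'' and ``a bound on characteristic varieties,'' which does not engage the actual difficulty; and the proposed detour through Harish--Chandra regularity and infinitesimal characters to get temperedness is an extra burden the paper avoids by working with tempered generalized functions throughout and invoking the comparison between distributions and Schwartz distributions. As written, the proposal identifies the right toolbox but omits the transposition/sign-character trick and the null-cone computation that constitute the proof.
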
 
The geometry of closed $\GL_{n}(E)$-orbits on the symmetric space $\GL_{2n}(F)/\GL_{n}(E)$ is well known due to Guo \cite{guo1997unique} and Carmeli \cite{carmeli2015stability}. Then we will use the Harish-Chandra descent techniques developed in \cite{dima2009duke} to show the following identity of distributions
\begin{equation}\label{A}
\mathscr{D}(\GL_{2n}(F)/\GL_{n}(E))^{\GL_{n}(E)\cap P}=\mathscr{D}(\GL_{2n}(F)/\GL_{n}(E))^{\GL_{n}(E)} \end{equation}
(see \S4.1). Together with the injective map
\[A_\pi:\pi^\ast\otimes(\pi^\vee)^\ast\longrightarrow \mathscr{D}(\GL_{2n}(F)), \]
 \eqref{A} will lead to a proof of Theorem \ref{thmA}.  
 In fact, we will prove a slightly stronger result that any element in $\mathscr{D}(\GL_{2n}(F)/\GL_{n}(E))^{\GL_{n-1}(E)}$ is invariant under $\sigma$ where the action of $\sigma$ (order $2$) is given by $$\sigma:\begin{pmatrix}
 	A&B\\ \bar{B}&\bar{A}
 \end{pmatrix}\mapsto \begin{pmatrix}
 \bar{A}^t&B^t\\ \bar{B}^t&A^t
\end{pmatrix}$$
for any $\begin{pmatrix}
	A&B\\ \bar{B}&\bar{A}
\end{pmatrix}\in \GL_{2n}(F)$. Here we identify the symmetric variety $\GL_{2n}(F)/\GL_n(E)$ with the space of matrices
 \[X_n=\{g\in\GL_{2n}(F):g\theta(g)=1 \} \]
 and the transpose acts on $X_n$. Thus $\sigma$ acts on $\mathscr{D}(X_n)=\mathscr{D}(\GL_{2n}(F)/\GL_{n}(E))$ as well.
 \begin{thm}\label{thm:group}
 	One has $\mathscr{D}(X_n)^{\GL_n(E)\cap P }=\mathscr{D}(X_n)^{\GL_{n}(E)}$.
 \end{thm}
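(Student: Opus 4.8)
\emph{Outline of the argument.} Since $\GL_n(E)\cap P=P_E\subseteq\GL_n(E)$, the inclusion $\mathscr{D}(X_n)^{\GL_n(E)}\subseteq\mathscr{D}(X_n)^{P_E}$ is automatic, so only the reverse inclusion needs proof. The plan is first to reduce it to the announced $\sigma$-statement, and then to prove that statement by Harish-Chandra descent. For the reduction: one checks that $\sigma$ is an \emph{anti}-automorphism of $\GL_{2n}(F)$ commuting with $\theta$, hence it stabilizes the subgroup $\GL_n(E)$ and the variety $X_n$, and on $X_n$ it intertwines the action of $h\in\GL_n(E)$ (which is conjugation, as $\theta(h)=h$) with the action of $\rho(h):=\sigma(h)^{-1}$; here $\rho$ is a genuine automorphism of $\GL_n(E)$ which, under the identification $\GL_n(E)=\{\mathrm{diag}(A,\bar A)\}$, sends $A$ to $(\bar A^{t})^{-1}$ and therefore carries the mirabolic $P_E$ onto the ``column mirabolic'' $Q_E=\{g\in\GL_n(E):ge_n=e_n\}$. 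A short manipulation of distributions then shows that any $T\in\mathscr{D}(X_n)$ which is both $P_E$-invariant and $\sigma$-invariant is automatically $Q_E$-invariant; since $P_E$ and $Q_E$ contain between them all elementary matrices $1+cE_{i,j}$ with $i\ne j$ and the full diagonal torus, they generate $\GL_n(E)$, so such a $T$ is $\GL_n(E)$-invariant. As $\GL_{n-1}(E)\subseteq P_E$, Theorem~\ref{thm:group} will follow once we prove
\begin{equation}\label{sigmaincl}
\mathscr{D}(X_n)^{\GL_{n-1}(E)}\subseteq \mathscr{D}(X_n)^{\langle\sigma\rangle}.
\end{equation}

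The plan for \eqref{sigmaincl} is to induct on $n$ — the base case $n=1$ being immediate, since $\sigma$ acts trivially on $X_1$ — and to apply the generalized Harish-Chandra descent of Aizenbud--Gourevitch \cite{dima2009duke} to the action of the \emph{reductive} group $\GL_{n-1}(E)$ on the smooth affine variety $X_n$; passing from the non-reductive $P_E$ to $\GL_{n-1}(E)$ is exactly what makes the descent machinery applicable. By the localization principle it is enough to verify \eqref{sigmaincl} in a neighbourhood of each closed $\GL_{n-1}(E)$-orbit in $X_n$, and descent along the normal bundle to the orbit reduces this to a linear statement about the slice representation of the stabilizer $S$ of a semisimple point $x\in X_n$. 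Using the description of the closed $\GL_n(E)$-orbits on $\GL_{2n}(F)/\GL_n(E)$ of Guo \cite{guo1997unique} and Carmeli \cite{carmeli2015stability}, refined to the $\GL_{n-1}(E)$-action, each such slice is — up to a factor on which $S$ acts through a character — a product of a symmetric variety $X_{n'}$ with $n'<n$ (with its $\GL_{n'-1}(E)$-action) and linear constituents on which $S$ acts through orthogonal/hermitian data. On the $X_{n'}$-factor one invokes the inductive hypothesis; the remaining pieces are handled by combining the Fourier-transform criterion of \cite{dima2009duke} — with the Weil representation used to intertwine the Fourier transform with the $S$-action on the orthogonal/hermitian pieces — with the known $Mat_{n,n}$-results of \cite{bernstein1984,baruch2003annals,AG2009selecta,sunzhu2012annals} and an ``uncertainty principle'' dimension count forcing every $S$-equivariant distribution there to be $\sigma$-invariant. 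One further checks that the descent is compatible with $\sigma$: this is the case because $\sigma$ normalizes $\GL_{n-1}(E)$, hence permutes its closed orbits, so it descends to an involution on each slice (paired, where necessary, with its $\sigma$-image).

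In the archimedean case the same outline applies with ``distribution'' read as ``tempered generalized function on a Nash manifold'', the extra ingredient being the $D$-module and holonomicity techniques of \cite{AG2009selecta,sunzhu2012annals} used to bound the singular supports of equivariant distributions; the vanishing of the potential ``bad'' contributions on the slices is then reduced, as in Baruch's \cite{baruch2003annals} treatment of $Mat_{n,n}$, to the vanishing of invariant \emph{tempered} generalized functions, which follows from Harish-Chandra regularity of invariant eigendistributions on the descended reductive data.

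The step I expect to be the main obstacle is the descent for \eqref{sigmaincl}: one must produce an explicit and complete list of the closed $\GL_{n-1}(E)$-orbits on $X_n$ and of the attached slice representations in the twisted $E/F$-setting, and then verify that every slice not directly covered by the inductive hypothesis is ``$\sigma$-symmetric'', i.e.\ carries no $S$-equivariant generalized function that fails to be $\sigma$-invariant. It is precisely at that point that the Fourier-transform and Weil-representation input, combined with a careful estimate of the dimensions of the relevant singular loci, becomes indispensable.
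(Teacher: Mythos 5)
Your proposal is correct in outline and follows essentially the same route as the paper: reduce Theorem \ref{thm:group} to showing every $\GL_{n-1}(E)$-invariant (generalized) distribution on $X_n$ is $\sigma$-invariant (equivalently $\mathscr{D}(X_n)^{\tilde H_{n-1},\chi}=0$), since $P_E$ and its $\sigma$-image generate $\GL_n(E)$, and then run Aizenbud--Gourevitch Harish-Chandra descent through the closed orbits/descendants of Guo and Carmeli down to the linear model and its nilpotent cone, which is killed by the Fourier-transform criterion together with the Weil-representation and homogeneity/dimension-count argument in the $p$-adic case and by $D$-module (singular support, coisotropic) bounds when $F=\mathbb{R}$. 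The only cosmetic divergence is your closing appeal to Harish-Chandra regularity of invariant eigendistributions in the archimedean case, which is not needed: the paper finishes there purely with the coisotropicity of singular supports.
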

The key idea in the proof of Theorem \ref{thm:group} is to reduce a question on the distribution spaces of $X_n=\GL_{2n}(F)/\GL_{n}(E)$
to that of distributions on its tangenet space.

We may identify the linear version of $X_n=\GL_{2n}(F)/\GL_n(E)$ with the space of matrices
\[\begin{pmatrix}
&x\\ \bar{x}
\end{pmatrix} \]
for $x\in Mat_{n,n}(E)$ (see \cite{guo1997unique}), denoted by $L_n$. 
Let $\mathscr{C}(L_n)$ denote the {tempered} generalized functions on $L_n$.    Let $\GL_n(E)$ act on $L_n$ by the twisted conjugation, i.e., 
\[ \begin{pmatrix}
A\\&\bar{A}
\end{pmatrix}\cdot \begin{pmatrix}
&x\\ \bar{x}&
\end{pmatrix}=\begin{pmatrix}
& Ax\bar{A}^{-1}\\
\bar{A}\bar{x}A^{-1}&
\end{pmatrix} \]
for $A\in\GL_n(E)$ and let $\mathfrak{gl}_n(E)$
act on $L_n$ by its differential. More precisely,
\[\begin{pmatrix}
a\\&\bar{a}
\end{pmatrix}\cdot \begin{pmatrix}
&x\\ \bar{x}
\end{pmatrix}=\begin{pmatrix}
&ax-x\bar{a}\\
\bar{a}\bar{x}-\bar{x}a
\end{pmatrix} \]
for $a,x\in Mat_{n,n}(E)$.
 Then the following theorem is the Lie algebra version of Theorem \ref{thm:group}.
\begin{thm}\label{thmB}
	One has \[\mathscr{C}(L_n)^{\GL_n(E)\cap P }=\mathscr{C}(L_n)^{\GL_n(E)}. \]
\end{thm}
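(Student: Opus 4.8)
The plan is to prove Theorem~\ref{thmB} by generalized Harish--Chandra descent in the sense of \cite{dima2009duke}, combined with a Fourier-analytic vanishing argument, after first reducing the assertion to invariance under the order-two element $\sigma$. Throughout I identify $L_n$ with $Mat_{n,n}(E)$ via the coordinate $x$ and equip it with the non-degenerate $F$-bilinear form $\langle x,y\rangle=\operatorname{tr}_{E/F}\operatorname{tr}(x\bar y)$. With respect to $\langle\,,\,\rangle$ the twisted conjugation action of $\GL_n(E)$, the infinitesimal action of $\mathfrak{gl}_n(E)$, and the involution $\sigma$ (which on $L_n$ is $x\mapsto x^{t}$) are all orthogonal, so the Fourier transform $\mathscr F$ on $\mathscr C(L_n)$ intertwines each of them.

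The first step is a reduction. Since $\GL_{n-1}(E)\subset \GL_n(E)\cap P=P_E$, since $\sigma$ normalizes the $\GL_n(E)$-action on $L_n$ --- conjugating it by an automorphism that carries $P_E$ to the opposite mirabolic --- and since $P_E$ together with the opposite mirabolic generates $\GL_n(E)$, Theorem~\ref{thmB} follows from the stronger statement that every $\xi\in\mathscr C(L_n)^{\GL_{n-1}(E)}$ is fixed by $\sigma$; equivalently, that there is no nonzero $\xi\in\mathscr C(L_n)$ which is $\GL_{n-1}(E)$-invariant and satisfies $\sigma\xi=-\xi$. I would prove this by induction on $n$.

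For the inductive step, the invariants of the $\GL_n(E)$-action on $L_n$ are the coefficients of the characteristic polynomial of $x\bar x$, which transforms by $\GL_n(E)$-conjugation, so the closed-orbit geometry of $L_n$ is the one described by Guo \cite{guo1997unique}. Running the descent of \cite{dima2009duke} at a semisimple point $x_0$, the centralizer $(\GL_n(E))_{x_0}$ is, up to isogeny, a product of general linear groups over $E$ and over quadratic extensions of $E$ together with unitary-type factors, acting on an analytic slice that decomposes --- up to a trivial translation factor --- into lower linear models $L_{n'}$ with $n'<n$ together with symmetric models for the remaining factors, for which one invokes known cases such as \cite{offen2011,kemarsky}. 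Granting the desired vanishing for all proper descendants, any hypothetical nonzero $\xi$ as above must be supported on the null-cone $\mathcal N=\{x\in Mat_{n,n}(E):x\bar x\ \text{is nilpotent}\}$; applying the same reasoning to $\mathscr F\xi$, which is again $\GL_{n-1}(E)$-invariant and $\sigma$-anti-invariant, I may in addition assume $\operatorname{supp}(\mathscr F\xi)\subseteq\mathcal N$. To conclude I would bring in the Weil representation: $\GL_n(E)$ acts on $(L_n,\langle\,,\,\rangle)$ by isometries, so the Weil representation of $\Mp_2(F)$ attached to the quadratic form $x\mapsto\langle x,x\rangle$ on $\mathscr S(L_n)$, and its dual on $\mathscr C(L_n)$, commutes with the $\GL_n(E)$- and $\sigma$-actions, with $\mathscr F$ realized by a Weyl element of $\Sp_2$ and multiplication by a Gaussian by a unipotent element; hence the space of $\GL_{n-1}(E)$-invariant, $\sigma$-anti-invariant generalized functions is an $\Mp_2(F)$-module on which the homogeneity of $\xi$ and of $\mathscr F\xi$ is controlled by $\mathfrak{sl}_2$, and since $\mathcal N$ is a proper closed cone their homogeneity degrees are forced into disjoint ranges --- the uncertainty principle --- whence $\xi=0$.

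The hard part, I expect, will be the analysis on the null-cone $\mathcal N$: there only the residual group $\GL_{n-1}(E)$ acts, not the full $\GL_n(E)$, so the relevant orbits have larger codimension and a crude dimension count need not make the uncertainty principle close; one will likely need an additional layer of descent --- with respect to the torus or to the unipotent radical of $P_E$ --- or a Jacobson--Morozov-type stratification of $\mathcal N$, all of it carried out with wavefront-set control so that the argument remains valid for tempered, as opposed to merely smooth, generalized functions in the archimedean case. A low-rank base case of the induction, where the $\mathfrak{sl}_2$-bound is sharp, may in the end require a genuinely representation-theoretic non-distinction statement, which I would establish through the theta correspondence.
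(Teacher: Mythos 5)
Your skeleton is the same as the paper's: reduce Theorem \ref{thmB} to the statement that every $\GL_{n-1}(E)$-invariant element of $\mathscr{C}(L_n)$ is $\sigma$-invariant (Theorem \ref{vanishing:I}), then use the Aizenbud--Gourevitch descent machinery together with the known descendants of the pair $(\GL_{2n},R_{E/F}\GL_n)$ to reduce to generalized functions supported, together with their Fourier transforms, on the nilpotent cone. But the decisive step is precisely the one you defer, and the mechanism you propose for it does not close. The paper's crucial statement (Proposition \ref{fourier}) concerns an $H_{n-1}$-invariant $f$ on $L_{n-1}\oplus V\oplus V^\ast$ such that $f$, $\mathfrak{F}(f)$ \emph{and both partial Fourier transforms} $\mathfrak{F}_{L_{n-1}}(f)$, $\mathfrak{F}_{V\oplus V^\ast}(f)$ are supported on $\mathcal{N}_n$ --- your sketch never uses the partial transforms, and without them even the base case fails. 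In the non-archimedean case the contradiction is not a crude uncertainty principle on the cone $\mathcal{N}$: following Chen--Sun, one fixes a Jordan-form representative $\mathbf{e}$ of a nilpotent orbit, completes it to an $\mathfrak{sl}_2(F)$-triple, restricts to a transverse slice $L_{n-1}^{\mathbf{f}}\oplus V\oplus V^\ast$, and computes the possible $F^\times$-homogeneity degrees of orbit-supported invariant generalized functions; the contribution of the $(v,v^\ast)$-variables is controlled by the Weil representation of $\mathrm{Mp}_{4n-4}(F)$ acting there (not an $\Mp_2(F)$ oscillator representation attached to the quadratic form on all of $L_n$). This yields $\eta^2=|\cdot|^{tr(2-\mathbf{h})|_{L_{n-1}^{\mathbf{f}}}+4(n-1)}$ (Lemma \ref{lem:key}), which contradicts the eigenvalue $|\cdot|^{\frac12\dim_F(L_{n-1}\oplus V\oplus V^\ast)}$ forced by the homogeneity theorem (Theorem \ref{duke}) because of the explicit inequality \eqref{inequa}, whose proof is a short but essential count of irreducible $\mathfrak{sl}_2$-summands of $Mat_{n-1,n-1}(F)$. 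Your own caveat that ``a crude dimension count need not make the uncertainty principle close'' is exactly right, and the missing content is this eigenvalue computation and inequality.

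Two further points where the proposal diverges from what is actually needed. First, the base case $n=2$ is settled by a result of Aizenbud--Gourevitch (\cite[Lemma 6.3.4]{aizenbud2013partial}), not by a theta-correspondence argument, and the archimedean case is not handled by homogeneity at all: for $F=\mathbb{R}$ the paper proves Proposition \ref{fourier} by a $D$-module argument --- the singular support of $f$ is coisotropic, is contained in the variety $S$ cut out by the moment-map condition, and Proposition \ref{coisotropic} shows the relevant stratum $U(\mathcal{O},\mathcal{O}')$ contains no non-empty coisotropic subvariety, which forces $Supp(f)\subset \mathcal{N}\times\bigl((V\times\{0\})\cup(\{0\}\times V^\ast)\bigr)$ and then $f=0$; your passing mention of ``wavefront-set control'' gestures at this but supplies none of it. Second, in the descent itself the descendants of $(\GL_{2n},R_{E/F}\GL_n)$ (Carmeli; Lemma \ref{descendant}) are products of pairs $(R_{L_1/F}\GL_r,R_{L_2/F}\GL_r)$, $(\GL_r\times\GL_r,\triangle\GL_r)$ and $(\GL_{2r},R_{E/F}\GL_r)$ --- no unitary-type factors occur --- and the external input required is the transpose-invariance of $\widetilde{\GL_r(F)}$-invariant distributions on $Mat_{r+1,r+1}(F)$ (the $(\GL_{r+1},\GL_r)$-type results of \cite{AG2009selecta,AG2010annals,sunzhu2012annals}), not the results of \cite{offen2011,kemarsky}, which concern the different pair $(\GL_n(E),\GL_n(F))$ and enter only as a model for the group-level and representation-level deductions in Section 4.
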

\begin{rem}
	Here we study the tempered generalized functions space $\mathscr{C}(L_n)$ instead of the generalized functions or distributions $\mathscr{D}(L_n)$ on $L_n$ because we will use the Fourier transform on $\mathscr{C}(L_n)$. Moreover, there is not so much difference between $\mathscr{C}(L_n)$ and $\mathscr{D}(L_n)$ due to \cite[Theorem 4.0.2]{dima2009duke}.
\end{rem}

We give a brief introduction to the proof of Theorem \ref{thmB}. We will use the result of Aizenbud-Gourevitch (see Theorem \ref{thm2.1})
to reduce the problems on the tempered generalized functions on $L_n$ to those on the tempered generalized functions supported on the nilpotent cone. 
If $F$ is non-archimedean,
then we will pick up an $\mathfrak{sl}_2(F)$-triple $\{\mathbf{h},\mathbf{e},\mathbf{f} \}$ (see \eqref{sl}) and use Chen-Sun's method \cite{sun2020} to study some special nilpotent orbits $\mathcal{O}\owns\mathbf{e}$. If $F=\mathbb{R}$, then we will use the machine of $D$-modules to show the vanishing theorem.
Note that $\GL_{n-1}(E)$ is a proper subgroup of $P_E=\GL_{n}(E)\cap P$.
It turns out that each $\GL_{n-1}(E)$-invariant tempered generalized function on $L_n$ supported on $\mathcal{O}$ is invariant under transposition (see Theorem \ref{vanishing:I}), which  implies Theorem \ref{thmB}.

The paper is organized as follows. In \S2, we introduce some notations from algebraic geometry. Then we will use Chen-Sun's method (resp. the machine of D-modules) to prove Theorem \ref{thmB} when $F$ is non-archimedean (resp. $F=\mathbb{R}$) in \S3. In \S4.1, we will  give a proof of Theorem \ref{thm:group}. The proof of Theorem \ref{thmA} will be given in \S\ref{subsect:A}.

\section{Preliminaries and notation}

Let $X$ be an $\ell$-space (i.e. locally compact totally disconnected topological spaces) if $F$ is non-archimedean or a Nash manifold (see \cite{dima2009duke}) if $F=\mathbb{R}$. 
Let $\mathscr{C}(X)$ denote the tempered generalized functions on $X$. Let a reductive group $G(F)$
act on an affine variety $X$. Let $x\in X$ such that its orbit $G(F)x$ is closed in $X$. We denote the normal bundle by $N_{G(F)x,x}^X$. Let $$G_x:=\{g\in G(F)|gx=x \}$$ be the stalizer subgroup of $x$.
\begin{thm}\cite[Theorem 3.1.1]{dima2009duke}
	Let $G(F)$ act on a smooth affine variety $X$. Let $\chi$ be a character of $G(F)$. Suppose that for any closed orbit $Gx$ in $X$, we have
	\[\mathscr{C}(N^X_{G(F)x,x})^{G_x,\chi}=0. \]
	Then \[\mathscr{C}(X)^{G(F),\chi}=0. \]\label{thm:duke}
\end{thm}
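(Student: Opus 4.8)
The plan is to apply the Harish-Chandra descent formalism exactly as set up in the excerpt, reducing the vanishing of the twisted distribution space on $X$ to the corresponding statement on normal bundles at closed orbits. The starting point is the stratification of $X$ by orbit dimension: let $X_d$ be the (locally closed) union of orbits of dimension $d$. Working from the top stratum downward, I would argue that it suffices to prove $\mathscr{C}(U)^{G(F),\chi}=0$ for every $G(F)$-invariant open $U\subseteq X$, since any tempered generalized function vanishing on an open cover vanishes globally (this uses the sheaf property of $\mathscr{C}$, which holds both in the $\ell$-space setting and for Nash manifolds). Shrinking $U$ around a closed orbit of minimal dimension inside $\overline U$, one reduces to a neighbourhood of a single closed orbit $G(F)x$.

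The second step is the local structure near a closed orbit. Since $G(F)x$ is closed in the smooth affine $X$, one may choose (after passing to an invariant neighbourhood) a $G(F)$-equivariant ``tubular neighbourhood'' identifying a neighbourhood of $G(F)x$ with a neighbourhood of the zero section in the normal bundle $N^X_{G(F)x,x}$; in the non-archimedean case this is the submersion/Frobenius-type argument of Bernstein--Zelevinsky-type localization, and in the Nash category it is Aizenbud--Gourevitch's Nash tubular neighbourhood theorem. Under this identification, an invariant tempered generalized function on the neighbourhood restricts to one on the total space of the normal bundle, transforming under $(G(F),\chi)$. Using homogeneity of the normal bundle under the scaling action on fibres (the ``filtration by degree of homogeneity'' argument), vanishing on the whole bundle reduces to vanishing along the fibre over the base point $x$, where the group acting through the fibre is the stabilizer $G_x$, and the twisting character restricts to $\chi|_{G_x}$. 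That is precisely the hypothesis $\mathscr{C}(N^X_{G(F)x,x})^{G_x,\chi}=0$.

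Assembling these: Noetherian induction on the (finite) orbit-dimension stratification. The inductive hypothesis kills any invariant section supported on the union of strictly smaller strata; by the localization step the obstruction to extending this to $X$ is concentrated on closed orbits of the current minimal dimension; by the tubular-neighbourhood-plus-homogeneity step that obstruction is governed by $\mathscr{C}(N^X_{G(F)x,x})^{G_x,\chi}$, which vanishes by hypothesis. Hence $\mathscr{C}(X)^{G(F),\chi}=0$.

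The main obstacle is the homogeneity argument on the normal bundle: one must control how a $(G(F),\chi)$-invariant tempered generalized function on $N^X_{G(F)x,x}$ decomposes under the $F^\times$-action scaling the fibres, and show that each homogeneity component is determined by its restriction to the zero fibre --- this is where the ``tempered'' hypothesis is genuinely used (to run the Fourier-transform/Euler-operator argument cleanly), and where the non-archimedean and archimedean cases require different technical inputs (an $l$-sheaf filtration versus the theory of Nash manifolds and the behaviour of distributions under the Euler vector field). Since this is the content of \cite[Theorem 3.1.1]{dima2009duke} and its surrounding machinery, I would simply invoke that development rather than reproving it; the role of the present statement is to package it in the form needed for the applications to $X_n$ and $L_n$ later in the paper.
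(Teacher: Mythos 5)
The paper offers no proof of this statement at all: it is imported verbatim as \cite[Theorem 3.1.1]{dima2009duke} (generalized Harish--Chandra descent), so your closing decision to invoke Aizenbud--Gourevitch rather than reprove the theorem is exactly what the paper does, and as a matter of logic your proposal is acceptable. Your surrounding sketch also has the right overall shape of their argument: a nonzero $(G(F),\chi)$-equivariant tempered generalized function has a closed invariant support, which contains a closed orbit $G(F)x$; an analytic Luna slice (non-archimedean case) or its Nash analogue (archimedean case) identifies an invariant neighbourhood of that orbit with a neighbourhood of the zero section in the bundle induced from $N^X_{G(F)x,x}$; and one then descends to the stabilizer $G_x$.

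Two points in the sketch are off, however, and should not be presented as the actual mechanism of the proof. First, the passage from the normal bundle over the whole orbit to the single fibre $N^X_{G(F)x,x}$ with its $G_x$-action is Frobenius descent along the fibration over $G(F)/G_x$, not a ``homogeneity/filtration by degree of homogeneity'' argument on the fibres; the scaling-plus-Fourier homogeneity machinery is the content of the \emph{other} imported result (Corollary 3.2.2 of the same reference, Theorem \ref{thm2.1} here), where it effects the reduction to the nilpotent cone, and it plays no role in Theorem 3.1.1. Second, the slice theorem only identifies a neighbourhood of the orbit with a $G_x$-invariant \emph{open saturated subset} of $N^X_{G(F)x,x}$ containing $0$, so after descent you obtain a nonzero $(G_x,\chi)$-equivariant tempered generalized function on that open subset, not on the whole normal space; bridging this to the hypothesis, which concerns all of $N^X_{G(F)x,x}$, requires the saturatedness of the image (so that one can cut off invariantly via the categorical quotient $N^X_{G(F)x,x}/G_x$ and extend by zero), and this is precisely the nontrivial step of Aizenbud--Gourevitch's proof that your outline skips. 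Since you defer to the reference for the analysis, these are defects of the summary rather than of the citation, but as written the sketch would not assemble into a proof.
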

If $X$ is a finite
dimensional (complex) representation of $G(F)$, then we denote the nilpotent cone in $X$ by
\[\Gamma(X):=\{ x\in X|\overline{G(F)x}\owns 0 \}. \] 
Let $Q_G(X):=X/X^G$ and $R_G(X):=Q_G(X)- \Gamma(X)$.
\begin{thm}\cite[Corollary 3.2.2]{dima2009duke}
	\label{thm2.1}
	Let $X$ be a finite dimensional representation of a reductive group $G(F)$. Let $K\subset G(F)$ be an open subgroup and let $\chi$ be a character of $K$. Suppose that for any closed orbit $G(F)x$ such that
	\[\mathscr{C}(R_{G_x}(N_{G(F)x,x}^X))^{K_x,\chi}=0 \]
	we have \[\mathscr{C} (Q_{G_x}(N^X_{G(F)x,x}))^{K_x,\chi}=0. \]
	Then $\mathscr{C}(X)^{K,\chi}=0$.
\end{thm}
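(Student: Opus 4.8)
The plan is to prove Theorem~\ref{thm2.1} by induction on $\dim_F X$, using Theorem~\ref{thm:duke} (more precisely its analogue for an open subgroup $K$ and a character $\chi$ of $K$) for the descent itself, and Luna's \'etale slice theorem in the $F$-analytic, resp. Nash, category of \cite{dima2009duke} for the geometric input. The base case $\dim X=0$ will be subsumed by the general step, since then $R_G(X)=\emptyset$.

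First I would reduce to the case $X^{G(F)}=0$. Splitting $X=X^{G(F)}\oplus Q_G(X)$ along a $G(F)$-invariant complement, $K$ acts trivially on $X^{G(F)}$, so $\mathscr{C}(X)^{K,\chi}\cong\mathscr{C}(X^{G(F)})\,\widehat{\otimes}\,\mathscr{C}(Q_G(X))^{K,\chi}$ and hence $\mathscr{C}(X)^{K,\chi}=0$ if and only if $\mathscr{C}(Q_G(X))^{K,\chi}=0$; moreover $\Gamma(Q_G(X))=\Gamma(X)$, $R_G(Q_G(X))=R_G(X)$, and at every point the slice representations for $X$ and for $Q_G(X)$ differ only by a trivial summand, so the hypothesis of Theorem~\ref{thm2.1} is inherited by $Q_G(X)$. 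Thus one may assume $X^{G(F)}=0$, in which case $Q_G(X)=X$ and $R_G(X)=X\setminus\Gamma(X)$.

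The main work is then the vanishing $\mathscr{C}(R_G(X))^{K,\chi}=\mathscr{C}(X\setminus\Gamma(X))^{K,\chi}=0$. Put $U=X\setminus\Gamma(X)$, a smooth $G(F)$-invariant subvariety. I would first check that every $G(F)$-orbit closed in $U$ is already closed in $X$: the unique closed $G(F)$-orbit in its closure inside $X$ cannot be $\{0\}$ (else the orbit would lie in $\Gamma(X)$), hence meets $U$ and must coincide with the given orbit. For such an orbit $G(F)x\subseteq U$ one has $x\neq 0$, so $\dim G(F)x\geq 1$ because $X^{G(F)}=0$, and therefore the slice representation $N:=N^X_{G(F)x,x}$ of $G_x$ satisfies $\dim N<\dim X$. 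By Luna's slice theorem and the compatibility of iterated slices, the closed $G_x$-orbits in $N$, together with their stabilizers, slice representations and the restriction of $\chi$, are identified with those of closed $G(F)$-orbits of $X$; consequently the hypothesis of Theorem~\ref{thm2.1} holds for the $G_x$-representation $N$ with open subgroup $K_x$ and character $\chi|_{K_x}$. The inductive hypothesis applied to $N$ then gives $\mathscr{C}(N^X_{G(F)x,x})^{K_x,\chi}=0$ for every closed orbit in $U$, and Theorem~\ref{thm:duke} applied to the $G(F)$-action on $U$ yields $\mathscr{C}(U)^{K,\chi}=0$.

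Finally I would invoke the hypothesis of Theorem~\ref{thm2.1} at the closed orbit $\{0\}$: there $G_0=G(F)$, $K_0=K$, the normal bundle $N^X_{G(F)\cdot 0,0}$ is $X$ itself with its given $G(F)$-action, $R_{G_0}(N^X_{G(F)\cdot 0,0})=R_G(X)=U$ and $Q_{G_0}(N^X_{G(F)\cdot 0,0})=Q_G(X)=X$. Since $\mathscr{C}(U)^{K,\chi}=0$ by the previous paragraph, the hypothesis forces $\mathscr{C}(X)^{K,\chi}=0$, completing the induction. The hardest part will be the descent step: one must (i) apply Theorem~\ref{thm:duke} on the merely quasi-affine $U$ rather than an affine variety, which is legitimate because its conclusion is local along closed orbits and $U$ is covered by $G(F)$-invariant pieces, or via the general descent principle of \cite{dima2009duke}; and (ii) transport the hypothesis of Theorem~\ref{thm2.1} from $X$ to each slice representation $N^X_{G(F)x,x}$, which is precisely where Luna's slice theorem in the $F$-analytic/Nash setting and its behaviour under iteration are essential. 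In the archimedean case one must in addition keep all restrictions, extensions and tensor decompositions inside the category of tempered generalized functions on Nash manifolds.
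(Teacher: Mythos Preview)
The paper does not give its own proof of Theorem~\ref{thm2.1}; the statement is quoted verbatim from \cite[Corollary~3.2.2]{dima2009duke} and used as a black box throughout. So there is no proof in the paper to compare your proposal against.

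That said, your sketch is essentially the argument by which this corollary is obtained in \cite{dima2009duke}: induction on $\dim_F X$, reduction to $X^{G(F)}=0$ by splitting off the trivial summand, Harish-Chandra descent on $U=X\setminus\Gamma(X)$ to kill $\mathscr{C}(R_G(X))^{K,\chi}$, and then the hypothesis applied at the closed orbit $\{0\}$ to conclude. The two technical points you isolate at the end are indeed the only places requiring care. For (i), the descent of \cite{dima2009duke} is formulated so that it applies on $U$: the quotient map $X\to X/\!\!/G$ restricts to a map $U\to (X/\!\!/G)\setminus\{0\}$, and one works fiberwise over this base. For (ii), the transport of the hypothesis to a slice $N=N^X_{G(F)x,x}$ uses, beyond Luna's theorem itself, the observation that the data $((G_x)_y,\,N^N_{G_x y,y})$ attached to a closed $G_x$-orbit $G_x y\subset N$ is unchanged under scaling $y\mapsto ty$ (the $G_x$-action on $N$ being linear), so every such orbit may be moved into the \'etale slice and hence matched with a closed $G(F)$-orbit in $X$; you should make this scaling step explicit, as Luna's theorem alone only gives the correspondence near $0\in N$. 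With that addition your outline is correct and faithful to the original source.
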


\subsection{D-modules and singular support} In this subsection, assume that $F=\mathbb{R}$.
Let $X$ be a Nash manifold. Denote by $S(X)$ the space of Schwartz functions on $X$. Denote by $\mathscr{C}(X)$ the linear dual space to $S(X)$, i.e. the tempered generalized functions on $X$. All the materials in this subsection come from \cite{dima2009duke,AG2009selecta,aizenbud2013partial}.

\subsubsection{Coisotropic variety}
Let $M$ be a smooth algebraic variety and $\omega$
be a symplectic form on it. Let $Z\subset M$ be an algebraic subvariety. We call it $M$-\textbf{coisotropic} if $T_zZ\supset (T_zZ)^\perp$ for a generic smooth point $z\in Z$, where $(T_zZ)^\perp$ denotes the orthogonal complement to $T_zZ$ in $T_zM$ with respect to $\omega$. Note that every non-empty $M$-coisotropic variety is of dimension at least $\frac{1}{2}\dim M$. For a smooth algebraic variety $X$, we always consider the standard symplectic form on the cotangent bundle $T^\ast X$. Also, we denote by $p_X:T^\ast X\rightarrow X$ the standard projection. 

\begin{lem}\cite[Lemma 3.0.7]{aizenbud2013partial}
	Let $X$ be a smooth algebraic variety. Let a group $G$ act on $X$ which induces an action on $T^\ast X$. Let $S\subset T^\ast X$ be a $G$-invariant subvariety. Then the maximal $T^\ast X$-coisotropic subvariety of $S$ is also $G$-invariety.
\end{lem}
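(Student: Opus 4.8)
The plan is to deduce the statement from the observation that the maximal coisotropic subvariety is canonically attached to the pair consisting of $S$ and the canonical symplectic form $\omega$ on $T^\ast X$, both of which are preserved by $G$. So the entire content reduces to checking these two preservation facts and then invoking the uniqueness of the maximal coisotropic subvariety.

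First I would record the symplectic input. For any automorphism $g$ of $X$, the induced automorphism $T^\ast g$ of $T^\ast X$ fixes the tautological one-form and hence preserves $\omega$, so the induced $G$-action on $(T^\ast X,\omega)$ is by symplectomorphisms. Next I would note that any symplectomorphism $\psi$ of a symplectic variety $M$ carries coisotropic subvarieties to coisotropic subvarieties: $\psi$ restricts to an isomorphism from the smooth locus of an irreducible $W$ onto the smooth locus of $\psi(W)$, it sends generic points to generic points, and at a generic smooth point $w$ the differential $d\psi_w$ is a linear symplectic isomorphism, so it carries $T_wW$ onto $T_{\psi(w)}\psi(W)$ and the symplectic orthogonal $(T_wW)^\perp$ onto $(T_{\psi(w)}\psi(W))^\perp$; thus $T_wW\supseteq(T_wW)^\perp$ holds if and only if $T_{\psi(w)}\psi(W)\supseteq(T_{\psi(w)}\psi(W))^\perp$. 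Reducible subvarieties are handled componentwise, and $\psi$ evidently preserves Zariski closedness.

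With these in hand the conclusion is immediate. Let $Z\subseteq S$ be the maximal $T^\ast X$-coisotropic subvariety, i.e. the unique largest closed coisotropic subvariety contained in $S$, namely the union of all irreducible coisotropic subvarieties of $S$. Fix $g\in G$. By the preceding paragraph $g\cdot Z$ is again a closed coisotropic subvariety, and $g\cdot Z\subseteq g\cdot S=S$ since $S$ is $G$-invariant; by maximality of $Z$ this forces $g\cdot Z\subseteq Z$. Applying the same argument to $g^{-1}$ gives $Z=g\cdot(g^{-1}\cdot Z)\subseteq g\cdot Z$, whence $g\cdot Z=Z$. As $g$ ranges over $G$, this shows that $Z$ is $G$-invariant, which is the assertion.

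I do not anticipate a genuine obstacle here: once (i) the cotangent lift of an automorphism is a symplectomorphism and (ii) coisotropy is a symplectomorphism-invariant notion are available, the argument is purely formal. The one point that deserves care — and which I would treat as part of the standing setup of \cite{aizenbud2013partial} rather than reprove — is the well-posedness of ``the maximal coisotropic subvariety'', namely that the union of all irreducible coisotropic subvarieties of $S$ is again a closed subvariety of $T^\ast X$; this is what guarantees that $Z$ is a uniquely defined object on which the above inclusion-by-maximality argument can act.
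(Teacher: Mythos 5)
Your argument is correct and is essentially the standard one: the paper itself only quotes this lemma from \cite{aizenbud2013partial} without proof, and the proof there is exactly your formal argument that the induced action on $T^\ast X$ is by symplectomorphisms, hence permutes coisotropic subvarieties of the $G$-invariant set $S$, so the (unique) maximal one is preserved. Your closing caveat is the right one to flag: the well-definedness of the maximal coisotropic subvariety is part of the setup in the cited reference, and given it, nothing further is needed.
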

Let $Y$ be a smooth algebraic variety. Let $Z\subset Y$ be a smooth subvariety.
Let $R\subset T^\ast Y$ be any subvariety. We define the restriction
\[R|_Z:=i^\ast(R) \]
of  $R$ to $Z$ in $T^\ast Z$, where $i:Z\rightarrow Y$ is the embedding. 
\begin{lem}\cite[Lemma 3.0.9]{aizenbud2013partial}
	Let $R\subset T^\ast Y$ be a coisotropic subvariety. Assume that any smooth point $z\in Z\cap p_Y(R)$ is also a smooth point of $p_Y(R)$ and we have $T_z(Z\cap p_Y(R))=T_z(Z)\cap T_z(p_Y(R))$. Then $R|_Z$ is $T^\ast Z$-coisotropic.
\end{lem}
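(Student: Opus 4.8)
The plan is to identify the restriction $R|_Z = i^\ast(R)$ with the image of $R$ under a symplectic reduction and then to check that symplectic reduction preserves coisotropy. Put $C := p_Y^{-1}(Z) = T^\ast_Z Y \subset T^\ast Y$ and let $\varrho : C \to T^\ast Z$ be the canonical map restricting a covector on $T_zY$ to $T_zZ$ (for $z\in Z$); unwinding the definition of the pull-back of a subvariety of $T^\ast Y$ along $i$, one has $i^\ast(R) = \varrho(R\cap C)$. A computation in coordinates $(x,y,\xi,\eta)$ on $T^\ast Y$ adapted to $Z=\{y=0\}$ shows that $C$ is $T^\ast Y$-coisotropic, that $(T_\lambda C)^\perp = \ker d\varrho_\lambda$ (the directions along the fibres of $\varrho$), and that $\varrho^\ast\omega_Z = \omega_Y|_C$; that is, $(T^\ast Z,\varrho)$ is the symplectic reduction of the coisotropic subvariety $C$. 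So the lemma reduces to the assertion that the $\varrho$-image of the coisotropic variety $R$ is again coisotropic, given the hypotheses.

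The heart is a linear-algebra computation at a generic smooth point $\lambda$ of $R\cap C$, lying over $z=p_Y(\lambda)$. First, since $Z\hookrightarrow Y$ is a closed immersion, the (scheme-theoretic) tangent spaces satisfy $T_\lambda(R\cap C)=T_\lambda R\cap T_\lambda C$: both sides equal $(dq_\lambda)^{-1}(T_zZ)$, where $q:=p_Y|_R$ and $T_\lambda C=(dp_Y)_\lambda^{-1}(T_zZ)$. Granting, in addition, that $R\cap C$ is generically reduced of the expected dimension, so that $T_{\varrho(\lambda)}(R|_Z)=d\varrho_\lambda\big(T_\lambda(R\cap C)\big)$, one computes, using $(A\cap B)^\perp = A^\perp+B^\perp$ in the symplectic space $T_\lambda(T^\ast Y)$, the inclusion $(T_\lambda C)^\perp=\ker d\varrho_\lambda\subseteq T_\lambda C$, and the modular law $(A+B)\cap D=(A\cap D)+B$ when $B\subseteq D$:
\[
\big(T_{\varrho(\lambda)}(R|_Z)\big)^\perp \;=\; d\varrho_\lambda\!\left(\big((T_\lambda R)^\perp+(T_\lambda C)^\perp\big)\cap T_\lambda C\right)\;=\;d\varrho_\lambda\!\left((T_\lambda R)^\perp\cap T_\lambda C\right)\;\subseteq\;d\varrho_\lambda\!\left(T_\lambda R\cap T_\lambda C\right)\;=\;T_{\varrho(\lambda)}(R|_Z),
\]
where the inclusion uses that $R$ is coisotropic, i.e.\ $(T_\lambda R)^\perp\subseteq T_\lambda R$. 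Hence $R|_Z$ is $T^\ast Z$-coisotropic.

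The delicate point — and the step I expect to be the real work — is justifying the genericity input above: that the hypotheses ($p_Y(R)$ smooth at generic $z\in Z\cap p_Y(R)$, and $T_z(Z\cap p_Y(R))=T_zZ\cap T_zp_Y(R)$) force $R\cap C = q^{-1}(Z\cap W)$, with $W:=p_Y(R)$, to be, along each of its irreducible components, reduced of the expected dimension $\dim R-\mathrm{codim}_W(Z\cap W)$ with the expected tangent spaces, so that passing to the reduced variety $R|_Z$ does not shrink the relevant tangent space. I would establish this by first reducing to the case that $Z$ is a smooth hypersurface — a codimension-$c$ closed embedding factors as a tower of $c$ hypersurface embeddings, and the hypothesis is inherited at each stage — and then, for a hypersurface $Z$, running the dimension count from the generic smoothness of $q:R\to W$ (we are in characteristic $0$) together with the clean-intersection identity $T_z(Z\cap W)=T_zZ\cap T_zW$. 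Alternatively, the statement can be read off from the standard microlocal calculus for the behaviour of involutive (coisotropic) varieties under restriction to a submanifold.
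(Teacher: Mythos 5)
The paper does not prove this lemma at all: it is quoted verbatim from \cite[Lemma 3.0.9]{aizenbud2013partial}, so there is no internal proof to compare with, and your attempt has to be judged on its own. Your overall strategy is the natural one and matches the spirit of the source: view $C:=p_Y^{-1}(Z)=T^\ast Y|_Z$ (note that your notation $T^\ast_ZY$ usually means the conormal bundle, not this) as a coisotropic submanifold whose reduction is $T^\ast Z$, identify $i^\ast(R)=\varrho(R\cap C)$, and run the linear algebra with $(T_\lambda C)^\perp=\ker d\varrho_\lambda$, $\varrho^\ast\omega_Z=\omega_Y|_C$, the identity $(A\cap B)^\perp=A^\perp+B^\perp$ and the modular law. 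That computation is correct as linear algebra.

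The genuine gap is exactly the step you flag and then only sketch: producing, at points lying over generic smooth points of $R|_Z$, the equalities $T_{\varrho(\lambda)}(R|_Z)=d\varrho_\lambda\bigl(T_\lambda(R\cap C)\bigr)$ and $T_\lambda(R\cap C)=T_\lambda R\cap T_\lambda C$ for the \emph{variety} $R\cap C$ (not merely the scheme-theoretic intersection, whose Zariski tangent space is of no use here). This is not a technical afterthought: with only the a priori inclusion $T_\lambda(R\cap C)\subseteq T_\lambda R\cap T_\lambda C$, your perpendicular computation gives $\bigl(d\varrho(W')\bigr)^\perp\supseteq d\varrho\bigl((T_\lambda R)^\perp\cap T_\lambda C\bigr)$, i.e.\ the inclusion runs the wrong way, so the coisotropy of $R|_Z$ cannot be concluded. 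Moreover this is precisely the place where the lemma's hypotheses (smooth points of $Z\cap p_Y(R)$ are smooth in $p_Y(R)$, and $T_z(Z\cap p_Y(R))=T_zZ\cap T_z(p_Y(R))$) must be converted into cleanness of $R\cap C$ upstairs; your proposal never carries out this conversion. The suggested remedy --- factor $Z\hookrightarrow Y$ into hypersurface embeddings and claim ``the hypothesis is inherited at each stage'' --- is not justified: the hypotheses concern $Z$ and $p_Y(R)$, and after restricting to an intermediate hypersurface $Z_1$ the relevant projection becomes $p_{Z_1}(R|_{Z_1})$, which need not coincide with (or intersect $Z$ as cleanly as) $p_Y(R)\cap Z_1$; the appeal to ``standard microlocal calculus'' likewise replaces the argument by a citation of the statement to be proved. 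In addition, you apply $(T_\lambda R)^\perp\subseteq T_\lambda R$ at points $\lambda\in R\cap C$ that need not be generic (or even smooth) in $R$; this can be repaired by a semicontinuity/limit-of-tangent-spaces remark, but it is another step you would need to supply. As it stands the proof is incomplete at its central point.
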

\begin{coro}\cite[Corollary 3.0.11]{aizenbud2013partial}
	Let $Y$ be a smooth algebraic variety. Let an algebraic group $H$ act on $Y$. Let $q:Y\rightarrow B$ be an $H$-equivariant morphism. Let $\mathcal{O}\subset B$ be an orbit. Consider the natural action of $H$ on $T^\ast Y$ and let $R\subset T^\ast Y$ be an $H$-invariant subvariety. Suppose that $p_Y(R)\subset q^{-1}(\mathcal{O})$. Let $x\in \mathcal{O}$. Denote $Y_x:=q^{-1}(x)$. Then if $R$ is $T^\ast Y$-coisotropic then $R|_{Y_x}$ is $T^\ast(Y_x)$-coisotropic. Thus if $R|_{Y_x}$
	has no (non-empty) $T^\ast(Y_x)$-coisotropic subvarieties then $R$ has no (non-empty) $T^\ast Y$-coisotropic subvariety.
\end{coro}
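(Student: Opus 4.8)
The plan is to deduce the corollary from the restriction lemma \cite[Lemma 3.0.9]{aizenbud2013partial}, using \cite[Lemma 3.0.7]{aizenbud2013partial} for the second half; the only real work is to verify the transversality hypotheses of \cite[Lemma 3.0.9]{aizenbud2013partial} for the smooth subvariety $Z:=Y_x\subset Y$.

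First I would set up the geometry over the orbit. Put $W:=q^{-1}(\mathcal O)$. Since $q$ is $H$-equivariant and $\mathcal O$ is a single $H$-orbit, $W$ is a homogeneous fibration over $\mathcal O\cong H/H_x$: one has an $H$-equivariant identification $W\cong H\times_{H_x}Y_x$ under which $q|_W$ becomes the bundle projection. In particular $q|_W$ is a smooth morphism, so $W$ is smooth, $Y_x=q^{-1}(x)$ is a smooth (locally closed) subvariety of $Y$, and $T_zY_x=\ker\big(d(q|_W)_z\big)$ for $z\in Y_x$. The same reasoning applies to $P:=p_Y(R)$: since $R$ is $H$-invariant and $p_Y$ is $H$-equivariant, $P$ is an $H$-invariant subvariety of $W$, and it meets each fibre $q^{-1}(h\cdot x)$ in the $H$-translate $h\cdot(P\cap Y_x)$ of the $H_x$-stable set $P\cap Y_x$; thus $P=H\cdot(P\cap Y_x)$ is itself a homogeneous fibration over $\mathcal O$ with fibre $P\cap Y_x$. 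Concretely, over any \'etale neighbourhood $U\ni x$ on which the $H_x$-torsor $H\to\mathcal O$ admits a section $s$, that section simultaneously trivialises $W\cong U\times Y_x$ and $P\cong U\times(P\cap Y_x)$, compatibly with the inclusion $P\subset W$ and with $Y_x=\{x\}\times Y_x$.

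From this local product picture I get the two hypotheses of \cite[Lemma 3.0.9]{aizenbud2013partial} for $Z=Y_x$. Since $P$ is smooth over the smooth base $\mathcal O$, every point of $P=p_Y(R)$ is a smooth point of it, so in particular every smooth point of $Z\cap p_Y(R)=Y_x\cap P$ is a smooth point of $p_Y(R)$. And reading off tangent spaces at $z\in Y_x\cap P$ gives $T_zW=T_zY_x\oplus L$ and $T_zP=T_z(Y_x\cap P)\oplus L$ with a common complement $L$ mapping isomorphically onto $T_x\mathcal O$, whence $T_z(Z)\cap T_z(p_Y(R))=T_z(Y_x\cap P)=T_z(Z\cap p_Y(R))$. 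As $Z=Y_x$ is a smooth subvariety of $Y$ and $R$ is $T^\ast Y$-coisotropic by hypothesis, \cite[Lemma 3.0.9]{aizenbud2013partial} yields that $R|_{Y_x}$ is $T^\ast(Y_x)$-coisotropic, which is the first assertion.

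For the second assertion I would argue by contradiction: suppose $R$ contains a non-empty $T^\ast Y$-coisotropic subvariety, and let $R_0$ be the maximal $T^\ast Y$-coisotropic subvariety of $R$, which is then non-empty and, by \cite[Lemma 3.0.7]{aizenbud2013partial} applied to the $H$-invariant $R$, is $H$-invariant. Since $R_0\subseteq R$ we still have $p_Y(R_0)\subseteq p_Y(R)\subseteq q^{-1}(\mathcal O)$, so the first assertion applies verbatim to $R_0$: thus $R_0|_{Y_x}$ is $T^\ast(Y_x)$-coisotropic. It is non-empty, because $p_Y(R_0)$ is a non-empty $H$-invariant subset of $q^{-1}(\mathcal O)$, hence $q(p_Y(R_0))$ is a non-empty $H$-invariant subset of the orbit $\mathcal O$ and so equals $\mathcal O$; in particular $x\in q(p_Y(R_0))$, so $p_Y(R_0)$ meets $Y_x$, $R_0$ meets $(T^\ast Y)|_{Y_x}$, and its image $R_0|_{Y_x}$ in $T^\ast(Y_x)$ is non-empty. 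Since restriction is monotone with respect to inclusion, $R_0|_{Y_x}\subseteq R|_{Y_x}$, contradicting the assumed absence of a non-empty $T^\ast(Y_x)$-coisotropic subvariety of $R|_{Y_x}$; hence $R$ has no non-empty $T^\ast Y$-coisotropic subvariety. The step I expect to be the main obstacle is the transversality verification — upgrading the bare $H$-equivariance into the statement that $q^{-1}(\mathcal O)$ and the $H$-invariant piece $p_Y(R)$ inside it are genuine \'etale-locally trivial fibrations over $\mathcal O$, so that $p_Y(R)$ meets the fibre $Y_x$ cleanly; everything else (existence and $H$-invariance of the maximal coisotropic subvariety via \cite[Lemma 3.0.7]{aizenbud2013partial}, monotonicity of $R\mapsto R|_Z$, and the orbit argument for non-emptiness) is routine.
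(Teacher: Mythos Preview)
The paper does not supply its own proof of this corollary: it is quoted verbatim from \cite[Corollary 3.0.11]{aizenbud2013partial} in a preliminaries subsection whose opening sentence announces that all the material is taken from \cite{dima2009duke,AG2009selecta,aizenbud2013partial}. So there is nothing in the present paper to compare your argument against.

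Your derivation from the two preceding lemmas is the natural one and is essentially correct. One overstatement to clean up: you write ``Since $P$ is smooth over the smooth base $\mathcal O$, every point of $P=p_Y(R)$ is a smooth point of it.'' That is false in general---$P\cap Y_x$ can certainly be singular, and then so is $P$. What your \'etale-local product description $P\cap q^{-1}(U)\cong U\times (P\cap Y_x)$ actually gives is the weaker (and sufficient) statement that a point $z\in P\cap Y_x$ is smooth in $P$ if and only if it is smooth in $P\cap Y_x$; hence any smooth point of $Z\cap p_Y(R)=P\cap Y_x$ is automatically smooth in $p_Y(R)$, and the tangent-space identity you need follows from the same splitting. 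With that correction the verification of the hypotheses of \cite[Lemma 3.0.9]{aizenbud2013partial} goes through, and your contrapositive argument for the second assertion via \cite[Lemma 3.0.7]{aizenbud2013partial} is fine.
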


\subsubsection{Singular support}
Let $X$ be a smooth algebraic variety. Let $D_X$ denote the algebra of polynomial differential operators on $X$.  Let $Gr D_X$ be the associated graded algebra of $D_X$. Then $Gr D_X\cong \mathcal{O}(T^\ast X)$.

Let $\xi\in\mathscr{C}(X)$. Denoted by $SS(\xi)$ the singular support of the right $D_X$-module generated by $\xi$. Then $SS(\xi)\subset T^\ast X$ is nothing but the zero set of $Gr(Ann_{D_X}\xi)$ where
$Ann_{D_X}\xi$ is the annihilator of $\xi$. (See \cite[Appendix B]{AG2009selecta} for more details.)

Let $V$ be a vector space over $F$. 
Let $B$ be a non-degenerate bilinear form on $V$. Then $B$ defines  Fourier transform with respect to the self-dual Haar measure on $V$, denoted by $\mathfrak{F}_V$. For any Nash manifold $M$, we also denote by
\[\mathfrak{F}_V:\mathscr{C}(M\times V)\rightarrow \mathscr{C}(M\times V) \]
the fiberwise Fourier transform or partial Fourier transform. Consider $B$ as a map $B:V\rightarrow V^\ast$. Identify $T^\ast(X\times V)$ with $T^\ast X\times V\times V^\ast$. We define
\[F_V:T^\ast(X\times V)\rightarrow T^\ast(X\times V) \]
by $F_V(x,v,\phi)=(x,-B^{-1}\phi,Bv)$.
\begin{prop}\cite[\S2.3]{AG2009selecta}
	\begin{enumerate}[(i)]
		\item Let $\xi\in \mathscr{C}(X)$. Then the Zariski closure of $Supp(\xi)$ is $p_X(SS(\xi))$.
		\item Let an algebraic group $G$ act on $X$. Let $\mathfrak{g}$ denote the Lie algebra of $G$.
		Let $\xi\in \mathscr{C}(X)^{G}$. Then $SS(\xi)\subset \{(x,\phi)\in T^\ast X| \phi(\alpha(x))=0 \mbox{ for all }\alpha\in\mathfrak{g} \}$.
		\item Let $(V,B)$ be a quadratic space. Let $X$ be a smooth algebraic variety. 
		  Let $Z\subset X\times V$ be a closed subvariety, invariant with respect to homotheties in $V$. Suppose that $Supp(\xi)\subset Z$. Then $SS(\mathfrak{F}_V(\xi))\subset F_V(p^{-1}_{X\times V}(Z))$.
		\item Let $X$ be a smooth algebraic variety. Let $\xi\in\mathscr{C}(X)$. Then $SS(\xi)$ is coisotropic.
	\end{enumerate}
\end{prop}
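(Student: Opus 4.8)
The plan is to deduce all four assertions from the single description $SS(\xi)=V\!\left(Gr(Ann_{D_X}\xi)\right)\subset T^\ast X$, using that this locus is Zariski closed and conic (homogeneous along the cotangent fibres). For (i): an order‑$0$ operator is a function $f\in\mathcal{O}(X)$, and $f\in Ann_{D_X}\xi$ iff $f\xi=0$ iff $f$ vanishes on the support of $\xi$; hence the degree‑$0$ piece of $Gr(Ann_{D_X}\xi)\subset Gr D_X\cong\mathcal{O}(T^\ast X)$, viewed via $p_X^\ast$, is the ideal of the Zariski closure $\overline{Supp(\xi)}$, which gives $SS(\xi)\subset p_X^{-1}(\overline{Supp(\xi)})$ and so $p_X(SS(\xi))\subset\overline{Supp(\xi)}$. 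For the reverse inclusion I would use conicity: each fibre $SS(\xi)_x\subset T_x^\ast X$ is a Zariski‑closed cone, hence non‑empty precisely when it contains the origin, so $p_X(SS(\xi))$ is the projection of $SS(\xi)$ intersected with the zero section, a set cut out exactly by the degree‑$0$ piece just computed; therefore it equals $\overline{Supp(\xi)}$. (Alternatively, since differential operators are local, $\overline{Supp(\xi)}$ is the support of the coherent $D_X$‑module generated by $\xi$, and $p_X$ of a characteristic variety is always the support.)

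For (ii), invariance of $\xi$ under $G$ forces invariance under its identity component, hence $D_\alpha\xi=0$ for every $\alpha\in\mathfrak{g}$, where $D_\alpha\in D_X$ is the order‑$\le 1$ operator realizing the vector field $x\mapsto\alpha(x)$; its principal symbol is the fibrewise‑linear function $(x,\phi)\mapsto\phi(\alpha(x))$, which therefore lies in $Gr(Ann_{D_X}\xi)$, and intersecting the zero loci over all $\alpha$ gives the stated bound. For (iv), I would simply note that $SS(\xi)$ is the characteristic variety of the coherent $D_X$‑module generated by $\xi$, so its coisotropy for the canonical symplectic form on $T^\ast X$ is exactly Gabber's involutivity theorem (equivalently the Sato--Kawai--Kashiwara involutivity result); there is nothing to prove beyond invoking it.

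For (iii), the technical core, the plan runs as follows. Since $Supp(\xi)\subset Z$, every element of the ideal $I(Z)\subset\mathcal{O}(X\times V)$ kills $\xi$, so $I(Z)\subset Ann_{D_{X\times V}}\xi$. Conjugation by the partial Fourier transform $\mathfrak{F}_V$ is an automorphism of $D_{X\times V}$ that fixes $D_X$ and $\mathcal{O}(X)$ and interchanges---up to the identification $B\colon V\xrightarrow{\sim}V^\ast$ and a sign---multiplication by a linear coordinate on $V$ with the corresponding constant‑coefficient vector field, whence $Ann_{D_{X\times V}}(\mathfrak{F}_V\xi)=\mathfrak{F}_V\,(Ann_{D_{X\times V}}\xi)\,\mathfrak{F}_V^{-1}\supset\mathfrak{F}_V\,I(Z)\,\mathfrak{F}_V^{-1}$. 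Because $Z$ is homothety‑invariant in $V$, the ideal $I(Z)$ is homogeneous for the $V$‑degree, hence generated by elements $h(x,v)$ homogeneous of some degree $d$ in $v$; for such an $h$ the operator $\mathfrak{F}_V\,h\,\mathfrak{F}_V^{-1}$ has order exactly $d$, consisting of pure top‑order differentiation in the fibre directions with coefficients pulled back from $X$. Its principal symbol, read off in the momentum variables, is (up to sign) the function on $T^\ast(X\times V)\cong T^\ast X\times V\times V^\ast$ obtained from $h$ by substituting $B^{-1}$ applied to the $V^\ast$‑fibre coordinate in place of $v$. These symbols lie in $Gr(Ann_{D_{X\times V}}(\mathfrak{F}_V\xi))$, so their common zero locus contains $SS(\mathfrak{F}_V\xi)$; since $V(I(Z))=Z$ and the symbols impose no further constraint, that zero locus is exactly $\{(x,v,\phi):(p_X x,B^{-1}\phi)\in Z\}=F_V\!\left(p_{X\times V}^{-1}(Z)\right)$, as required.

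The step I expect to be the main obstacle is (iii): conjugation by $\mathfrak{F}_V$ does not preserve the order filtration of $D_{X\times V}$, so one cannot transport symbols directly; the homothety‑invariance hypothesis is precisely what makes $I(Z)$ homogeneous in the $V$‑variables, and it is this homogeneity that has to be exploited with care when computing the principal symbols of the conjugated operators. Part (iv), by contrast, is not proved here but imported wholesale from Gabber's theorem.
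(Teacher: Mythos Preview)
The paper does not prove this proposition at all: it is quoted verbatim from \cite[\S2.3]{AG2009selecta} as background, with no argument supplied. So there is no ``paper's own proof'' to compare against; your write-up is already more than what appears here.

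Your sketch is essentially correct and follows the standard route. One small slip in part~(i): the asserted equivalence ``$f\xi=0$ iff $f$ vanishes on $Supp(\xi)$'' fails in the direction you need. For instance, with $\xi=\delta'$ on $\mathbb{R}$ one has $Supp(\xi)=\{0\}$ and $x$ vanishes there, yet $x\delta'=-\delta\neq 0$. What is true is that $f\xi=0$ implies $f|_{Supp(\xi)}=0$, giving only $V(Ann^0)\supset\overline{Supp(\xi)}$. The reverse inclusion requires either a Nullstellensatz-type argument (some power $f^N$ annihilates $\xi$, which uses temperedness) or, more cleanly, the standard $D$-module fact you yourself cite as an alternative: the projection of the characteristic variety to the base is the support of the module. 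Your parenthetical alternative is the right way to close the gap; just don't rely on the faulty ``iff''. Parts (ii)--(iv) are fine as written; in particular your treatment of (iii) correctly identifies why homothety-invariance of $Z$ is the crucial hypothesis, and (iv) is indeed nothing more than Gabber's theorem.
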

\subsubsection{Distributions on non-distinguished nilpotent orbits}
	Let $V$ be an algebraic finite dimensional representation of a reductive group $G$. Let $\Gamma(V)$
	be the nilpotent cone of $V$.
\begin{defn}
 Suppose that there is a finite number of $G$-orbits in $\Gamma(V)$. Let $x\in \Gamma(V)$. We call it $G$-distinguished if its conormal bundle $CN_{Gx,x}^{Q(V)}\subset\Gamma(V^\ast)$. We will call a $G$-orbit $G$-distinguished if all its elements are $G$-distinguished.
\end{defn}
In the case when $G =\GL_n(\mathbb{R}) $ and $V =Mat_{n,n}(\mathbb{R}) $ the set of $G$-distinguished elements is exactly the set of regular nilpotent elements.
\begin{prop}\cite[Proposition 4.3.4]{aizenbud2013partial}
	Let $W:=Q(V)$ and let $A$ be the set of non-distinguished elements in $\Gamma(V)$. Then there are no non-empty $W\times W^\ast$-coisotropic subvarieties of $A\times \Gamma(V^\ast)$.
\end{prop}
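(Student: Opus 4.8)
\emph{Proof strategy.}
The plan is to argue by contradiction: suppose $A\times\Gamma(V^\ast)$ contains a non-empty $W\times W^\ast$-coisotropic subvariety $\mathcal Z$, which I may take to be irreducible, since an irreducible component of a coisotropic variety is again coisotropic. As there are only finitely many $G$-orbits in $\Gamma(V)$, there is a unique orbit $O_1$ with $O_1\cap p_W(\mathcal Z)$ dense in $p_W(\mathcal Z)$, where $p_W\colon T^\ast W\to W$ is the projection; because $p_W(\mathcal Z)\subseteq A$ and $A$ is a union of orbits, $O_1\subseteq A$, so $O_1$ is non-$G$-distinguished, and the whole point is to turn this into a contradiction.

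The argument is local over $O_1$. Put $U:=W\setminus\bigcup\{\overline O:\ O\ \text{a }G\text{-orbit with}\ O_1\not\subseteq\overline O\}$; this is a $G$-stable open set in which $O_1$ is closed, with $\overline{O_1}\cap U=O_1$, so $p_W(\mathcal Z)\cap U$ is a non-empty dense subset of $O_1$. Hence $\mathcal Z|_U:=\mathcal Z\cap T^\ast U$ is a non-empty irreducible coisotropic subvariety of $T^\ast U$ contained in the restricted cotangent bundle $C:=p_U^{-1}(O_1)$, which is a smooth coisotropic submanifold of $T^\ast U$ whose characteristic directions are, by a direct computation, $(T_{(x,\phi)}C)^{\perp}=\{0\}\times(T_xO_1)^{\perp}=\{0\}\times CN^{W}_{O_1,x}$ for every $(x,\phi)\in C$. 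At a generic smooth point $(x,\phi)$ of $\mathcal Z|_U$ one has $T_{(x,\phi)}\mathcal Z|_U\subseteq T_{(x,\phi)}C$ and, by coisotropy, $(T_{(x,\phi)}\mathcal Z|_U)^{\perp}\subseteq T_{(x,\phi)}\mathcal Z|_U$; combining these gives $\{0\}\times CN^{W}_{O_1,x}=(T_{(x,\phi)}C)^{\perp}\subseteq(T_{(x,\phi)}\mathcal Z|_U)^{\perp}\subseteq T_{(x,\phi)}\mathcal Z|_U$. Since $p_U$ restricts to a smooth (fibre-bundle) map $\mathcal Z|_U\to O_1$ near such points, the fibre $\mathcal Z_x:=\{\phi:(x,\phi)\in\mathcal Z\}$ has $T_\phi\mathcal Z_x\supseteq CN^{W}_{O_1,x}$ at each of its generic smooth points; by the standard fact that a variety whose generic tangent spaces all contain a fixed linear subspace is a union of translates of that subspace, $\mathcal Z_x$ contains a coset $\phi_0+CN^{W}_{O_1,x}$.

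Finally, $\mathcal Z_x\subseteq\Gamma(V^\ast)$ because $\mathcal Z\subseteq A\times\Gamma(V^\ast)$, so the affine subspace $\phi_0+CN^{W}_{O_1,x}$ lies in the closed cone $\Gamma(V^\ast)$; scaling by $t^{-1}$ and passing to the (Zariski) limit $t\to 0$ forces the linear subspace $CN^{W}_{O_1,x}=CN^{Q(V)}_{Gx,x}$ itself into $\Gamma(V^\ast)$, which by definition says that $x$, and hence the orbit $O_1$, is $G$-distinguished — contradicting $O_1\subseteq A$. Therefore no such $\mathcal Z$ exists. I expect the main obstacle to be the bookkeeping in the middle paragraph: the characteristic-direction computation for $C=p_U^{-1}(O_1)$ and, more delicately, the genericity control — smoothness of $\mathcal Z|_U$ and the fibre-bundle structure of $p_U$ over the non-closed orbit $O_1$, and the passage from a pointwise tangent-space inclusion to a global coset decomposition of the fibres $\mathcal Z_x$. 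This step can alternatively be organized through the transverse slice to $O_1$ together with \cite[Corollary 3.0.11]{aizenbud2013partial}; the surrounding reductions and the final scaling argument are routine.
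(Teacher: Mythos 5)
This proposition is quoted in the paper from \cite[Proposition 4.3.4]{aizenbud2013partial}; the paper itself contains no proof, so there is nothing internal to compare against, and your reconstruction follows what is essentially the standard (and, as far as I can tell, the original) argument: localize at the unique orbit $O_1$ meeting $p_W(\mathcal Z)$ densely, observe that coisotropy of $\mathcal Z$ inside the smooth coisotropic $C=p_U^{-1}(O_1)$ forces the characteristic directions $\{0\}\times CN^W_{O_1,x}$ into $T\mathcal Z$, deduce that a fibre contains an affine translate of $CN^W_{O_1,x}$, and then use that $\Gamma(V^\ast)$ is a closed cone to push the translate to the linear subspace itself, contradicting non-distinguishedness. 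The reductions, the computation $(T_{(x,\phi)}C)^\perp=\{0\}\times CN^W_{O_1,x}$, and the final scaling step are all correct; note also that with the paper's definition of coisotropic (``at a generic smooth point''), the condition $(T_z\mathcal Z)^\perp\subseteq T_z\mathcal Z$ is a closed condition on the smooth locus, so it in fact holds at \emph{every} smooth point, which simplifies your genericity bookkeeping.

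The one place that needs tightening is the step you yourself flag: $\mathcal Z$ is \emph{not} assumed $G$-invariant, so $p_U(\mathcal Z|_U)$ need not be all of $O_1$ (nor open in it), and $p_U\colon \mathcal Z|_U\to O_1$ is not literally a fibre bundle; moreover the tangency statement you have is at generic points of the total space $\mathcal Z|_U$, which says nothing a priori about a \emph{fixed} fibre $\mathcal Z_x$. The fix is standard but should be said: replace ``fibre bundle over $O_1$'' by the dominant map $\mathcal Z|_U\to \overline{p_U(\mathcal Z|_U)}$, invoke characteristic-zero generic smoothness and upper semicontinuity of fibre dimension to choose $x$ generic in that image so that the smooth locus of $\mathcal Z|_U$ and the smooth locus of the map meet $\mathcal Z_x$ in a dense subset, and only then conclude $CN^W_{O_1,x}\subseteq T_\phi\mathcal Z_x$ at a dense set of $\phi$ and apply the translation-invariance fact to (a top-dimensional component of) $\overline{\mathcal Z_x}$, which still lies in the closed cone $\Gamma(V^\ast)$. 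Also, two harmless overstatements: $p_W(\mathcal Z)\cap U$ is contained in $O_1$ and non-empty (density in $O_1$ is neither true in general nor needed), and the coset you produce lies in $\overline{\mathcal Z_x}$ rather than in $\mathcal Z_x$ itself, which suffices since $\Gamma(V^\ast)$ is closed. With these adjustments your argument is complete; alternatively, as you note, the fibre analysis can be packaged via \cite[Corollary 3.0.11]{aizenbud2013partial}.
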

\begin{coro}\cite[Corollary 4.3.5]{aizenbud2013partial}
	Let $\xi\in \mathscr{C}(W)$ and suppose that $Supp(\xi)\subset \Gamma(V)$ and $Supp(\mathfrak{F}(\xi))\subset \Gamma(V^\ast)$. Then the set of distinguished elements in $Supp(\xi)$ is dense in $Supp(\xi)$.
\end{coro}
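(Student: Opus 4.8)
The plan is to deduce the statement from the preceding proposition on the absence of coisotropic subvarieties of $A\times\Gamma(V^\ast)$, after computing the shape of the singular support $SS(\xi)\subset T^\ast W$ from the two support hypotheses; here $W=Q(V)$ and $A$ is the set of non-distinguished elements of $\Gamma(V)$. First I would reduce to an orbit statement. Since $\Gamma(V)$ is a finite union of $G$-orbits, the Zariski closure $Z:=\overline{Supp(\xi)}$ is a closed subset of $\Gamma(V)$ with finitely many irreducible components, and the generic point of each component $C$ lies in a unique orbit $\mathcal{O}_C$, with $\mathcal{O}_C\cap C$ dense and open in $C$. If $\mathcal{O}_C$ is $G$-distinguished for every $C$, then the distinguished elements are dense in $Z$, hence in $Supp(\xi)$, and we are done; so it suffices to rule out a component $C$ with $\mathcal{O}_C\subset A$.

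Next I would pin down $SS(\xi)$. By part (i) of the Proposition recalled from \cite[\S2.3]{AG2009selecta}, $p_W(SS(\xi))=Z\subset\Gamma(V)$, so $SS(\xi)\subset\Gamma(V)\times W^\ast$. Applying part (i) to $\mathfrak{F}(\xi)$ shows $p_{W^\ast}(SS(\mathfrak{F}\xi))\subset\Gamma(V^\ast)$; since $\Gamma(V)$ is a cone, part (iii) of that Proposition — equivalently, the fact that $SS$ transforms under the Fourier transform by the flip $F_W$ of $T^\ast W$ — then confines the fibre direction of $SS(\xi)$ to $\Gamma(V^\ast)$, giving
\[ SS(\xi)\ \subset\ \Gamma(V)\times\Gamma(V^\ast)\ \subset\ T^\ast W. \]
By part (iv), $SS(\xi)$ is $T^\ast W$-coisotropic, and hence, directly from the definition of coisotropy via a generic smooth point, each irreducible component of $SS(\xi)$ and each dense open subvariety of such a component is again $T^\ast W$-coisotropic.

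Now suppose a component $C$ of $Z$ has $\mathcal{O}_C\subset A$. Writing $S_1,\dots,S_k$ for the irreducible components of $SS(\xi)$, one has $\bigcup_r\overline{p_W(S_r)}=Z$, and since $C$ is a maximal irreducible closed subset of $Z$ there is an $r$ with $\overline{p_W(S_r)}=C$. Put $S^\circ:=S_r\cap p_W^{-1}(\mathcal{O}_C)$. Because $p_W(S_r)$ is constructible and dense in $C$ while $\mathcal{O}_C\cap C$ is dense and open in $C$, the two meet, so $S^\circ\neq\varnothing$; and since $S_r\subset p_W^{-1}(\overline{\mathcal{O}_C})$ with $\mathcal{O}_C$ open in $\overline{\mathcal{O}_C}$, the set $S^\circ$ is open and dense in the irreducible variety $S_r$, hence coisotropic. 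Finally
\[ S^\circ\ \subset\ \bigl(\mathcal{O}_C\times W^\ast\bigr)\cap\bigl(\Gamma(V)\times\Gamma(V^\ast)\bigr)\ =\ \mathcal{O}_C\times\Gamma(V^\ast)\ \subset\ A\times\Gamma(V^\ast), \]
so $S^\circ$ is a non-empty $W\times W^\ast$-coisotropic subvariety of $A\times\Gamma(V^\ast)$, contradicting the preceding proposition. Hence every $\mathcal{O}_C$ is $G$-distinguished, which proves the corollary.

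I expect the only genuine obstacle to be the second paragraph, namely forcing the fibre direction of $SS(\xi)$ into $\Gamma(V^\ast)$: this needs careful bookkeeping of the identifications $W\cong W^\ast$, $T^\ast W\cong W\times W^\ast$ and of the Fourier flip $F_W$, together with the routine but not-entirely-automatic facts that coisotropy passes to irreducible components and to dense open subvarieties. Once $SS(\xi)\subset\Gamma(V)\times\Gamma(V^\ast)$ is established, the remainder is a formal consequence of the preceding proposition.
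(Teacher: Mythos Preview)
The paper does not prove this corollary; it is simply quoted from \cite[Corollary~4.3.5]{aizenbud2013partial} as background in \S2.1.3, so there is no in-paper argument to compare against. Your proposal is correct and is exactly the intended deduction from the preceding proposition: use the two support hypotheses together with parts (i) and (iii) of the quoted Proposition to trap $SS(\xi)\subset\Gamma(V)\times\Gamma(V^\ast)$, invoke coisotropy (part (iv)), and then contradict the proposition by producing an irreducible component of $SS(\xi)$ lying over a non-distinguished orbit. Your self-flagged caveats are the right ones: in the Fourier step, part (iii) only gives an inclusion, so one should apply it to $\mathfrak{F}(\xi)$ (using that $\mathfrak{F}^2$ is negation and $\Gamma(V^\ast)$ is conic) rather than assert $SS(\mathfrak{F}\xi)=F_W(SS(\xi))$; and the passage ``$SS(\xi)$ coisotropic $\Rightarrow$ each irreducible component coisotropic'' is not literally immediate from the definition recorded here --- it uses that the characteristic variety of a $D_X$-module has every component coisotropic (Gabber/Sato--Kashiwara--Kawai), which is implicit in the reference to \cite[Appendix~B]{AG2009selecta}.
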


\section{A vanishing result of generalized functions}
In this section, we shall prove Theorem \ref{thmB}.
Recall that 
$$L_{n}=Mat_{n,n}(E)\cong\Big\{\begin{pmatrix}
0&x\\ \bar{x}&0
\end{pmatrix}:x\in Mat_{n,n}(E) \Big\}\subset \mathfrak{gl}_{2n}(F).$$ Let $H_{n}:=\GL_{n}(E)$.
Denote $\tilde{H}_{n}:=H_{n}\rtimes\langle\sigma\rangle$ where $\sigma$ acts on $H_{n}$ by the involution 
$$\begin{pmatrix}
a\\&\bar{a}
\end{pmatrix}\mapsto\begin{pmatrix}
(\bar{a}^{-1})^t\\&(a^{-1})^t
\end{pmatrix}.$$
Note that $\sigma$ is of order $2$.
The group $\tilde{H}_{n}$ acts on $L_{n}$ by
\[\begin{pmatrix}
a\\&\bar{a}
\end{pmatrix}\cdot \begin{pmatrix}
&x\\ \bar{x}
\end{pmatrix}=\begin{pmatrix} &ax\bar{a}^{-1}\\
\bar{a}\bar{x}a^{-1}
\end{pmatrix} \]
and $$\sigma\cdot \begin{pmatrix}
	&x\\ \bar{x}
\end{pmatrix}=\begin{pmatrix}
&x^t\\ \bar{x}^t
\end{pmatrix}$$
for $x\in Mat_{n,n}(E)$. Let $\chi$ be the sign character of $\tilde{H}_{n}$, i.e.
$\chi|_{H_{n}}$ is trivial and
\[\chi(\sigma)=-1. \]
The group $H_{n-1}$ is viewed as a subgroup  of $H_n$ via the embedding
\[ g\mapsto \begin{pmatrix}
g&0\\0&1
\end{pmatrix} \]
for $g\in\GL_{n-1}(E)$.
 Then $H_{n-1}$ is a proper subgroup of $P_E=P\cap \GL_{n}(E)$.
\begin{thm}\label{vanishing:I}
	We have $\mathscr{C}(L_{n})^{\tilde{H}_{n-1},\chi}=0$.
\end{thm}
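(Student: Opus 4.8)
The plan is to apply Theorem \ref{thm2.1} with $G = \tilde{H}_{n-1}$ (or more precisely with the reductive group $H_{n-1} \times \langle\sigma\rangle$ acting on the vector space $L_n$), $K = \tilde{H}_{n-1}$ and $\chi$ the sign character. This reduces the vanishing statement $\mathscr{C}(L_n)^{\tilde{H}_{n-1},\chi} = 0$ to a statement about $\chi$-equivariant tempered generalized functions supported on the nilpotent cone $\Gamma(L_n)$, together with control over the normal spaces at closed $\tilde{H}_{n-1}$-orbits. The first step is therefore to describe the closed $\tilde{H}_{n-1}$-orbits on $L_n$: a point $\left(\begin{smallmatrix} & x \\ \bar x & \end{smallmatrix}\right)$ with $x$ invertible has a closed orbit essentially governed by the conjugacy class of $x \bar x$ (the ``norm'' map), and on each such orbit one computes the stabilizer and the normal bundle. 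For the semisimple (non-nilpotent) part one expects, by the usual Harish-Chandra descent bookkeeping, that the normal slice is again of the same shape but for a smaller $n$ or a product of such pieces, so that an inductive hypothesis on $n$ handles all closed orbits except the one through $0$; the base of the induction is small $n$, done by hand.

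The remaining and genuinely new case is a function $\xi \in \mathscr{C}(L_n)^{\tilde{H}_{n-1},\chi}$ whose support lies in the nilpotent cone $\Gamma(L_n)$. Here I would split according to the archimedean nature of $F$, as the introduction indicates. If $F$ is non-archimedean, fix the $\mathfrak{sl}_2$-triple $\{\mathbf{h},\mathbf{e},\mathbf{f}\}$ referred to in \eqref{sl} and stratify $\Gamma(L_n)$ by $H_{n-1}$-orbits (these are finite in number); using Chen--Sun's descent along the grading defined by $\mathbf{h}$, one reduces to showing that for each nilpotent orbit $\mathcal{O}$ there is no nonzero $\chi$-equivariant generalized function on the appropriate slice, i.e. on $N^{L_n}_{H_{n-1}\mathbf{e},\mathbf{e}}$, which is the content of the forthcoming Theorem \ref{vanishing:I}-type analysis of the individual orbits $\mathcal{O} \ni \mathbf{e}$. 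If $F = \mathbb{R}$, I would instead use the $D$-module machinery: by Proposition (iv) of \S2.3 the singular support $SS(\xi)$ is coisotropic, by (ii) it lies in the zero-set of the moment map for the $H_{n-1}$-action, and by the Fourier-transform statement (iii) together with Corollary \ref{thm:duke}-style arguments the Fourier transform $\mathfrak{F}(\xi)$ is also supported on a nilpotent cone; then Proposition/Corollary of the ``non-distinguished nilpotent orbits'' subsection forces the support to be concentrated on distinguished orbits, and one checks directly that on these the $\sigma$-action together with the $\chi$-twist is incompatible with a nonzero invariant functional (transposition acts trivially on a regular nilpotent slice up to the group action, so $\chi(\sigma) = -1$ kills it).

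The conclusion in both cases is that any such $\xi$ supported on $\Gamma(L_n)$ vanishes, and combining with the inductive treatment of the other closed orbits via Theorem \ref{thm2.1} gives $\mathscr{C}(L_n)^{\tilde{H}_{n-1},\chi} = 0$. The main obstacle I anticipate is the orbit-by-orbit analysis on the nilpotent cone: one must identify precisely which nilpotent $H_{n-1}$-orbits on $L_n$ are $\sigma$-stable, compute the action of $\sigma$ on the corresponding slice representations (the normal spaces $N^{L_n}_{H_{n-1}\mathbf{e},\mathbf{e}}$), and verify in each case either that the slice has no coisotropic subvariety of the relevant kind (archimedean) or that the descent of $\chi$ is nontrivial on the stabilizer (non-archimedean) — this is where the specific matrix combinatorics of the pair $(\GL_{2n}(F), \GL_n(E))$ enters, and where the restriction from $P_E$ to the proper subgroup $H_{n-1}$ is exploited to gain enough rigidity. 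Everything else is a careful but routine application of the descent theorems and the singular-support estimates recalled in \S2.
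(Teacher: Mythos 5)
Your outline follows the same broad route as the paper (descent to closed orbits, then a nilpotent-cone vanishing statement proved by Chen--Sun homogeneity in the $p$-adic case and by $D$-modules over $\mathbb{R}$), but two of your steps have genuine gaps. First, you work with $L_n$ and describe the closed $\tilde H_{n-1}$-orbits as ``governed by the conjugacy class of $x\bar x$''; since $H_{n-1}=\GL_{n-1}(E)$ is \emph{not} the full symmetry group of $L_n$, this is incomplete. The paper's essential structural step is the decomposition $L_n=L_{n-1}\oplus V\oplus V^\ast\oplus E$ of $\tilde H_{n-1}$-spaces (the trivial factor $E$ being removed by the localization principle), and the relevant cone is $\mathcal{N}_n$, cut out not only by nilpotency of $x\bar x$ but also by the conditions $v^\ast(\bar x x)^k\bar v=v^\ast\bar x(x\bar x)^k v=0$ on the vector components; correspondingly, the key Proposition \ref{fourier} requires support conditions on the \emph{partial} Fourier transforms $\mathfrak{F}_{L_{n-1}}$ and $\mathfrak{F}_{V\oplus V^\ast}$ as well. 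Moreover the descent (Lemma \ref{descendant}) does not only produce ``the same problem for smaller $n$'': it also produces descendants $(R_{L_1/F}\GL_r,R_{L_2/F}\GL_r)$ and $(\GL_r\times\GL_r,\triangle\GL_r)$, which are disposed of by the known $(\GL_{r+1},\GL_r)$-type vanishing $\mathscr{C}(Mat_{r+1,r+1}(F))^{\widetilde{\GL_r(F)},\chi}=0$, not by your induction.

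Second, and more seriously, the actual mechanism that kills nilpotent-cone-supported distributions is never supplied. In the non-archimedean case your reduction ``to the slice, which is the content of the forthcoming Theorem-type analysis of the orbits'' is circular: the paper's argument at this point is a quantitative homogeneity computation (Lemma \ref{lem:key}), using the Weil representation on $V\oplus V^\ast$ and the $\mathfrak{sl}_2$-weights on $L_{n-1}^{\mathbf f}$ to show that any $F^\times$-eigenvalue $\eta$ on $\mathscr{C}_{\mathcal{O}}(L_{n-1}\oplus V\oplus V^\ast)^{H_{n-1}}$ satisfies $\eta^2=|\cdot|^{tr(2-\mathbf h)|_{L_{n-1}^{\mathbf f}}+4(n-1)}$, which contradicts the eigenvalue $|\cdot|^{\frac12\dim_F(L_{n-1}\oplus V\oplus V^\ast)}$ forced by Theorem \ref{duke} because of the inequality \eqref{inequa}; note that no sign character enters here --- the paper proves the stronger statement that \emph{all} $H_{n-1}$-invariant distributions with the stated Fourier support conditions vanish. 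In the real case your proposed finish (``transposition acts trivially on a regular nilpotent slice up to the group action, so $\chi(\sigma)=-1$ kills it'') is not how the argument can go and is doubtful as stated, since $\sigma$ interchanges $V$ and $V^\ast$ and acts nontrivially on the transverse data; the paper instead uses the coisotropic estimate of Proposition \ref{coisotropic} to force $Supp(f)\subset\mathcal{N}\times\bigl((V\times\{0\})\cup(\{0\}\times V^\ast)\bigr)$ and then invokes \cite[Lemma 6.3.4]{aizenbud2013partial}, with no use of $\sigma$ at all. Without these two ingredients (the eigenvalue contradiction and the support-localization argument), your proof does not close.
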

 Then Theorem \ref{thmB} follows from Theorem \ref{vanishing:I} due to the fact that the subgroups $P_E$ and its transpose $P_E^t$ generate the whole group $\GL_n(E)$.

Consider the decomposition $$L_n=L_{n-1}\oplus V\oplus V^\ast\oplus E$$
 of $\tilde{H}_{n-1}$-spaces, where $\tilde{H}_{n-1}$ acts on $E$ trivially and acts on $L_{n-1}\oplus V\oplus V^\ast$ via
 \[g\cdot (x,v,v^\ast)=(gx\bar{g}^{-1},gv,v^\ast\bar{g}^{-1} ) \]
 for $x\in L_{n-1}=Mat_{n-1,n-1}(E),v\in V$ and $v^\ast\in V^\ast$,
  $V^\ast$ is the linear dual space of $V$ and $\dim_E V=n-1$.
  Denote by
  \[\mathcal{N}_n:=\Bigg\{(x,v,v^\ast)\in L_{n-1}\oplus V\oplus V^\ast\Big|\begin{matrix} (x\bar{x})^{n-1}=0 \mbox{ and }  v^\ast(\bar{x}x)^k\bar{v}=0=v^\ast\bar{x}(x\bar{x})^kv \mbox{ for all non-negative integer }k
  \end{matrix} \Bigg\} \]
  the nilpotent cone in $L_{n-1}\oplus V\oplus V^\ast$. (See \cite[\S6.1]{aizenbud2013partial}.)
  Let $\mathcal{O}$ be an $H_{n-1}$-orbit in $\mathcal{N}_n$.
  Denote by $\mathscr{C}_{\mathcal{O}}(L_{n-1}\oplus V\oplus V^\ast )$ the space of the {tempered} generalized functions on $(L_{n-1}\oplus V\oplus V^\ast)\setminus \partial\mathcal{O}$ with support
  in $\mathcal{O}$, where $\partial\mathcal{O}$ is the complement of $\mathcal{O}$ in its closure in $L_{n-1}\oplus V\oplus V^\ast$. (See \cite[Notation 2.5.3]{dima2009duke}.) We will use similar notation without further explaination.
For the proof of Theorem \ref{vanishing:I}, we will prove the following.
\begin{thm}\label{key:vanishing}
	One has
	\[\mathscr{C}_{\mathcal{N}_n}(L_{n-1}\oplus V\oplus V^\ast)^{\tilde{H}_{n-1},\chi}=0. \]
\end{thm}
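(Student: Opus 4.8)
The plan is to reduce Theorem \ref{key:vanishing} to a statement about tempered generalized functions supported on individual nilpotent $H_{n-1}$-orbits $\mathcal{O}\subset\mathcal{N}_n$, using the Harish-Chandra-style localization results of \S2 (Theorem \ref{thm:duke} and Theorem \ref{thm2.1}). Concretely, I would first record that there are only finitely many $H_{n-1}$-orbits in $\mathcal{N}_n$ (this is essentially the classification in \cite[\S6.1]{aizenbud2013partial}), so a standard Noetherian induction on orbit closures applies: it suffices to show that $\mathscr{C}_{\mathcal{O}}(L_{n-1}\oplus V\oplus V^\ast)^{\tilde H_{n-1},\chi}=0$ for every orbit $\mathcal{O}$, where now $\tilde H_{n-1}$ is taken to act on the slice at a point of $\mathcal{O}$. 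By Theorem \ref{thm:duke} applied to the closed orbit $\mathcal{O}$ inside the open set $(L_{n-1}\oplus V\oplus V^\ast)\setminus\partial\mathcal{O}$, this in turn reduces to the vanishing of $\mathscr{C}(N)^{(\tilde H_{n-1})_z,\chi}$ on the normal space $N=N^{L_{n-1}\oplus V\oplus V^\ast}_{H_{n-1}z,z}$ at a representative $z\in\mathcal{O}$. The equivariant structure here is important: I must keep track of whether the stabilizer $(\tilde H_{n-1})_z$ meets the $\sigma$-coset, because if it does not then $\chi$ restricts nontrivially already to a connected piece and the vanishing is immediate; the substantive orbits are those for which the stabilizer contains an element projecting to $\sigma$.

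The second stage is to treat the normal space $N$ at each nilpotent $z$. I would attach to $z$ an $\mathfrak{sl}_2$-triple $\{\mathbf h,\mathbf e,\mathbf f\}$ (as in \eqref{sl}, with $\mathbf e$ in the relevant orbit), so that $N$ acquires a grading and, crucially, a non-degenerate bilinear form making the Fourier transform $\mathfrak F_N$ available; by the standard $\mathfrak{sl}_2$-homogeneity argument (e.g. the proof technique behind Theorem \ref{thm2.1} and Corollary \ref{thm2.1}-type statements), any invariant tempered generalized function on $N$ is forced to be supported on, and have Fourier transform supported on, the nilpotent cones, so the distinguished-orbit dichotomy of \cite[Corollary 4.3.5]{aizenbud2013partial} applies. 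This pushes the whole problem down to the \textbf{distinguished} nilpotent orbits. For those, the strategy bifurcates exactly as advertised in the introduction: when $F$ is non-archimedean I would run Chen--Sun's descent \cite{sun2020} on the special orbit $\mathcal{O}\owns\mathbf e$, producing an explicit smaller symmetric-type datum whose invariant functionals can be analyzed by the same machine inductively (on $n$); when $F=\mathbb{R}$ I would instead bound the singular support using part (iv) of the singular-support proposition together with the coisotropicity lemmas of \S2.1 (Lemma \ref{...}, Corollary \ref{...}) and \cite[Proposition 4.3.4]{aizenbud2013partial}, showing $SS(\xi)$ has no non-empty coisotropic subvariety and hence $\xi=0$.

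The decisive point, and the place where the sign character $\chi$ actually does the work, is the base case: one must show that on a distinguished nilpotent orbit a $(\tilde H_{n-1})_z$-equivariant distribution transforming by $\chi$ under $\sigma$ must vanish. The mechanism I would use is a symmetry (an explicit element $\tau$ of the stabilizer, built from the $\mathfrak{sl}_2$-triple together with the transpose-type involution defining $\sigma$) that acts on the relevant one-dimensional space of candidate distributions by $-1$ while lying in the kernel of $\chi$ — forcing the space to be zero. Equivalently, in the archimedean picture, the conormal variety to the distinguished orbit is Lagrangian and the transpose acts on it by an orientation-reversing map, so the $\chi$-isotypic part of the top cohomology/line bundle vanishes. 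I expect \emph{this last comparison of a group-theoretic involution with the sign character on the (at most one-dimensional) space of equivariant distributions on a distinguished orbit} to be the main obstacle: it requires an honest computation of the stabilizer of a distinguished nilpotent element together with its interaction with $\sigma$, and it is exactly where the specific matrix structure of $L_n$, the Galois action, and the transpose all have to be combined carefully. Everything upstream — finiteness of orbits, Harish-Chandra descent, the $\mathfrak{sl}_2$-homogeneity reduction to distinguished orbits — is, by contrast, a reasonably mechanical application of the cited results.
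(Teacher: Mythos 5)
Your overall scaffolding (localize to nilpotent orbits, exploit Fourier-support conditions and homogeneity) is in the same family as the paper's, but the step you yourself identify as decisive is both missing and misdirected. You defer everything to a ``base case'' in which a $\chi$-equivariant distribution on a distinguished nilpotent orbit is killed by an explicit stabilizer element $\tau$ interacting with $\sigma$; you never construct $\tau$, never compute the stabilizer of a nilpotent element of $\mathcal{N}_n$, and you concede this is ``the main obstacle.'' That is precisely the content that a proof must supply, so as written the argument has a genuine gap. Moreover, the paper's actual mechanism shows that the sign character is \emph{not} where the work happens at the nilpotent level: it proves the stronger statement (Proposition \ref{fourier}) that any \emph{plain} $H_{n-1}$-invariant tempered generalized function on $L_{n-1}\oplus V\oplus V^\ast$ whose Fourier transform and partial Fourier transforms are all supported on $\mathcal{N}_n$ vanishes. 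In the non-archimedean case this is done orbit by orbit via the Chen--Sun style computation (Lemma \ref{lem:key}): an $\mathfrak{sl}_2$-triple through $\mathbf{e}$, Frobenius-type restriction to the slice $L_{n-1}^{\mathbf{f}}\oplus V\oplus V^\ast$, and the Weil representation pin down the $F^\times$-eigenvalue as $|t|^{\frac{1}{2}(tr(2-\mathbf{h})|_{L_{n-1}^{\mathbf{f}}}+4n-4)}$, which contradicts the eigenvalue $|-|^{\frac{1}{2}\dim_F(L_{n-1}\oplus V\oplus V^\ast)}$ forced by the Aizenbud--Gourevitch homogeneity theorem (Theorem \ref{duke}) because of the inequality \eqref{inequa}; the case $n=2$ is handled separately. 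No dichotomy between distinguished and non-distinguished orbits, and no sign argument, enters here. The character $\chi$ only matters upstream, in the semisimple descent steps of Theorem \ref{vanishing:I} and Lemma \ref{lem4.2}, where the known vanishing $\mathscr{C}(Mat_{r+1,r+1}(F))^{\widetilde{\GL_r(F)},\chi}=0$ disposes of the other descendants.

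Two further points would make your route fail even if pursued. First, your reduction ``to distinguished orbits'' via \cite[Corollary 4.3.5]{aizenbud2013partial} is an archimedean tool: it rests on singular supports of $D$-modules and coisotropicity, which is exactly why the paper only uses it (through Proposition \ref{coisotropic} and the support argument) when $F=\mathbb{R}$; over a $p$-adic field you cannot invoke it, so your non-archimedean branch has no mechanism for discarding non-distinguished orbits. Second, your use of Theorem \ref{thm:duke} on the open set $(L_{n-1}\oplus V\oplus V^\ast)\setminus\partial\mathcal{O}$ is not a correct application of that statement (it requires checking all closed orbits of the ambient action, not just $\mathcal{O}$); what you want is Frobenius descent to the slice, which is what Lemma \ref{lem:key} actually implements, and there the output is a homogeneity degree to be contradicted, not a one-dimensional space on which a sign can be computed. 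Also note the orientation of your parity remark is off: when the stabilizer of $z$ does not meet the $\sigma$-coset, $\chi$ restricts trivially and gives no constraint, so nothing is ``immediate'' there; one genuinely needs the vanishing of plain $H_{n-1}$-invariants, which is what the paper proves.
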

We will show that Theorem \ref{vanishing:I} follows from Theorem \ref{key:vanishing} later.

Define a non-degenerate symmetric $F$-bilinear form on $\mathfrak{gl}_{2n}(F)$ by
\[\langle z, w\rangle_{\mathfrak{gl}_{2n}(F)}:=\mbox{the trace of }zw\mbox{ as an }F\mbox{-linear operator}. \]
Note that the restriction of this bilinear form on $L_{n}$ is still non-degenerate. Fix a non-trivial unitary character $\psi$ of $F$. Denote by
\[\mathfrak{F}:\mathscr{C}(L_{n})\longrightarrow \mathscr{C}(L_{n}) \]
the Fourier transform which is normalized such that for every Schwartz function $\varphi$ on $L_{n}$,
\[\mathfrak{F}(\varphi)(z)=\int_{L_{n}}\varphi(w)\psi(\langle z,w \rangle_{\mathfrak{gl}_{2n}(F)})d w  \]
for $z\in L_{n}$, where $d w$ is the self-dual Haar measure on $L_{n}$. If $L_{n}$ can be decomposed into a direct sum of two quadratic subspaces $U_1\oplus U_2$ such that each $U_i$ is non-degenerate with respect to $\langle-,-\rangle|_{U_i}$, then we may define the partial Fourier transform
\[\mathfrak{F}_{U_1}(\varphi)(x,y)=\int_{U_1}\varphi(z,y) \psi(\langle x,z\rangle|_{U_1}) dz \]
for $x\in U_1,y\in U_2$ and $\varphi\in \mathscr{C}(U_1\oplus U_2)$. Similarly for $\mathfrak{F}_{U_2}(\varphi)$.
It is clear that the Fourier transform $\mathfrak{F}$ intertwines the action of $\tilde{H}_{n-1}$. Thus we have the following lemma.
\begin{lem}\label{lem:intertwin}
	The Fourier transform $\mathfrak{F}$ preserves the space $\mathscr{C}(L_{n-1}\oplus V\oplus V^\ast)^{\tilde{H}_{n-1},\chi}$.
\end{lem}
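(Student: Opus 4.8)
The plan is to reduce the statement to the purely formal fact that the Fourier transform on a non-degenerate quadratic vector space commutes with any linear automorphism that is simultaneously an isometry of the form and measure preserving, and then to check that $\tilde{H}_{n-1}$ acts on $W:=L_{n-1}\oplus V\oplus V^{\ast}$ (equipped with the restriction of $\langle-,-\rangle_{\mathfrak{gl}_{2n}(F)}$, which remains non-degenerate) by such automorphisms. For the formal part: if $T\in\GL(W)$ satisfies $\langle Tz,Tw\rangle=\langle z,w\rangle$ and $|{\det}_{F}T|=1$, then substituting $w\mapsto Tw$ in $\mathfrak{F}\varphi(z)=\int_{W}\varphi(w)\psi(\langle z,w\rangle)\,dw$ and using $\langle Tz,w\rangle=\langle z,T^{-1}w\rangle$ yields $\mathfrak{F}(\varphi\circ T)=(\mathfrak{F}\varphi)\circ T$ for every Schwartz function $\varphi$; dualizing, $\mathfrak{F}$ commutes with the induced action of $T$ on $\mathscr{C}(W)$. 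Hence, if every $g\in\tilde{H}_{n-1}$ acts on $W$ by such a $T$, then for $\xi\in\mathscr{C}(W)^{\tilde{H}_{n-1},\chi}$ and any $g$ one gets $g\cdot\mathfrak{F}(\xi)=\mathfrak{F}(g\cdot\xi)=\chi(g)\mathfrak{F}(\xi)$, so $\mathfrak{F}(\xi)$ lies in the same space; and since $\mathfrak{F}$ is invertible, this space is preserved.

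So it only remains to check that $\tilde{H}_{n-1}$ acts by measure-preserving isometries. The trace form $\langle-,-\rangle_{\mathfrak{gl}_{2n}(F)}$ is invariant under conjugation by any element of $\GL_{2n}(F)$, and such conjugation is measure preserving (the determinant of $\mathrm{Ad}(h)$ equals $1$ for every $h$). Now $H_{n-1}$ acts on $W$ precisely through such conjugations --- an element $g\in\GL_{n-1}(E)$ acts by conjugation by $\mathrm{diag}(g,1,\bar{g},1)\in\GL_{2n}(F)$, i.e. by $x\mapsto gx\bar{g}^{-1}$ on the matrix coordinate --- so it acts by measure-preserving isometries. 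The involution $\sigma$ acts on $L_{n}$ by $x\mapsto x^{t}$, which on $\mathfrak{gl}_{2n}(F)$ is the map $z\mapsto Jz^{t}J$ with $J=(\begin{smallmatrix}0&I_{n}\\ I_{n}&0\end{smallmatrix})\in\GL_{2n}(F)$, that is, the composite of conjugation by $J$ with transposition; transposition is an $F$-linear involution, hence measure preserving, and an isometry of the trace form (since $\langle z^{t},w^{t}\rangle=\langle w,z\rangle=\langle z,w\rangle$), and one checks directly that $\sigma$ stabilizes $W$, interchanging the summands $V$ and $V^{\ast}$. Thus every element of $\tilde{H}_{n-1}=H_{n-1}\rtimes\langle\sigma\rangle$ acts on $W$ by a measure-preserving isometry of $(W,\langle-,-\rangle)$, and the first paragraph concludes.

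There is no genuine obstacle in this lemma: as the paper observes, it is essentially the sentence preceding it, and the only real content is the identification of the $\tilde{H}_{n-1}$-action with measure-preserving isometries (invariance of the trace form under conjugation for $H_{n-1}$, and under the transpose for $\sigma$); the character $\chi$ intervenes nowhere beyond being a scalar that commutes with $\mathfrak{F}$. I note moreover that the same argument applies verbatim to the partial Fourier transform along any $\tilde{H}_{n-1}$-stable orthogonal direct summand of $W$, for instance along $L_{n-1}$ or along $V\oplus V^{\ast}$, which is the form in which it will be applied in the subsequent descent.
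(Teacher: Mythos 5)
Your proof is correct and matches the paper's (essentially unstated) argument: the paper simply remarks that $\mathfrak{F}$ intertwines the $\tilde{H}_{n-1}$-action, which is exactly what you verify by checking that $\tilde{H}_{n-1}$ acts on $L_{n-1}\oplus V\oplus V^\ast$ by measure-preserving isometries of the trace form. Your closing observation that the same argument covers the partial Fourier transforms $\mathfrak{F}_{L_{n-1}}$ and $\mathfrak{F}_{V\oplus V^\ast}$ is a welcome precision, since those are indeed used later.
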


\subsection{Reduction within the null cone}
Recall that
\[\mathcal{N}_n:=\Bigg\{(x,v,v^\ast)\in L_{n-1}\oplus V\oplus V^\ast\Big|\begin{matrix} (x\bar{x})^{n-1}=0 \mbox{ and }  v^\ast (\bar{x}x)^k\bar{v}=0=v^\ast\bar{x}(x\bar{x})^kv \mbox{ for all non-negative integer }k
\end{matrix} \Bigg\} \]
 is the nilpotent cone in $L_{n-1}\oplus V\oplus V^\ast$.  Let 
$$\mathcal{N}:=\{x\in Mat_{n-1,n-1}(E):x\bar{x}\mbox{ is nilpotent} \}.$$

\begin{lem}\label{lem:guo}
	\cite[Lemma 2.3]{guo1997unique} For any $x$ in $\mathcal{N}$, there exists a $g$ in $H_{n-1}$
	such that the twisted conjugate $gx\bar{g}^{-1}$ of $x$ is in its Jordan normal form.
\end{lem}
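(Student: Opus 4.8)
The plan is to reduce to the classification of twisted conjugacy classes over $E/F$ via the well-known bijection with Galois cohomology. First I would recall that twisted conjugation $x \mapsto g x \bar g^{-1}$ on $Mat_{n-1,n-1}(E)$ restricted to $\GL_{n-1}(E)$ is, after the standard dictionary, the same as the conjugation action of $\GL_{n-1}(E)$ on the Hermitian-type forms, and that the invariant $x \bar x$ plays the role of the "norm": one checks directly that $g x \bar g^{-1} \cdot \overline{g x \bar g^{-1}} = g (x \bar x) g^{-1}$, so the ordinary conjugacy class of the matrix $x \bar x \in Mat_{n-1,n-1}(E)$ is a genuine invariant of the twisted conjugacy class of $x$. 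This is exactly why the hypothesis "$x \bar x$ nilpotent" is the right notion of the null cone here, and why the target "Jordan normal form of $x \bar x$" makes sense.

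Next I would carry out the core step: given $x \in \mathcal{N}$, produce $g \in \GL_{n-1}(E)$ with $g x \bar g^{-1}$ in Jordan form. The key observation is that the pair $(x, \bar x)$ defines a module structure that decomposes according to the Jordan blocks of $N := x\bar x$, and one wants to match up these pieces with those of the transpose/conjugate data. Concretely, I would argue by induction on $n-1$ on the size of the largest Jordan block of $N$: pick a vector $u \in E^{n-1}$ generating a cyclic subspace for $N$ of maximal dimension $d$, so that $u, Nu, \dots, N^{d-1}u$ are independent; then the vectors $\bar x u, \overline{N}\,\bar x u = \bar x N u, \dots$ span a complementary-type piece, and after rescaling $u$ appropriately (using that $E/F$ allows one to adjust by norms, which is where Hilbert 90 enters) one arranges that in the resulting basis the operator $x$ sends each basis vector to the next, i.e. is a single nilpotent Jordan block on that summand. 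Peeling this summand off and applying the inductive hypothesis to its $H$-stabilized complement finishes the construction.

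The main obstacle I expect is the bookkeeping in the inductive step: one must verify that the complementary subspace obtained by removing the maximal cyclic piece is itself stable under the twisted action of a suitable subgroup isomorphic to $\GL_{n-1-d}(E)$, and that the scaling needed to normalize $u$ is always realizable — this is precisely a Hilbert 90 statement ($H^1(\Gal(E/F), E^\times) = 1$), guaranteeing that the norm obstruction vanishes so that $x$ restricted to the cyclic piece can be brought to the standard Jordan shift with all $1$'s. Once that is in place the rest is formal. Since the statement is quoted verbatim from \cite[Lemma 2.3]{guo1997unique}, I would in the actual write-up simply cite Guo's proof rather than reproduce this argument, but the sketch above is the structure underlying it.
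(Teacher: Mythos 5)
The paper itself gives no argument for this lemma --- it is quoted with a citation to Guo, and that is all --- so the only question is whether your sketch would stand on its own. Its skeleton is the right one: twisted conjugation $x\mapsto gx\bar g^{-1}$ is exactly change of basis for the conjugate-linear operator $T(v)=x\bar v$ on $E^{n-1}$, one has $T^2=x\bar x$, so $x\in\mathcal N$ means $T$ is nilpotent, and the lemma is the Jordan form theorem for nilpotent semilinear operators, proved by peeling off a maximal cyclic subspace and inducting. Your opening observation that $gx\bar g^{-1}\cdot\overline{gx\bar g^{-1}}=g(x\bar x)g^{-1}$ is correct and is the reason $\mathcal N$ is the right null cone.

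However, as written the sketch has genuine problems. First, the Hilbert 90 step is a red herring: in the nilpotent case the basis $u,Tu,\dots,T^{d-1}u$ of a cyclic piece automatically gives the shift matrix with all off-diagonal entries equal to $1$, since $T$ sends each basis vector literally to the next; there is no norm obstruction to kill, because the ``eigenvalue'' is $0$. Normalizing by norms via $H^1(\Gal(E/F),E^\times)=1$ is what one needs for the semisimple (invertible) part of the twisted conjugation problem, not here, and presenting it as the key point suggests a confusion between the two situations. Second, your cyclic data are taken for the wrong operator: you choose $u$ cyclic of maximal length for the linear operator $N=x\bar x$ and then adjoin vectors such as $\bar x N^k u$, but the relevant vectors are $T^{2k}u=N^k u$ and $T^{2k+1}u=x\overline{N^k u}$, and the induction parameter must be the maximal $T$-cyclic length, not the maximal $N$-cyclic length --- these differ, e.g.\ when $x\neq 0$ but $x\bar x=0$, where every $N$-cyclic subspace is a line while the $T$-cyclic subspace generated by a suitable $u$ is two-dimensional. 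Third, and most importantly, the actual content of the theorem --- that the maximal $T$-cyclic subspace admits a $T$-invariant complement (equivalently, the structure theory of finitely generated torsion modules over the twisted polynomial ring $E[t;\,\mathrm{Gal}]$, which is a principal ideal domain) --- is exactly the step you defer as ``bookkeeping'' and never supply. Since the paper only cites Guo, citing \cite[Lemma 2.3]{guo1997unique} is of course acceptable; but as a self-contained proof your sketch is incomplete at precisely its crucial step, and the Hilbert 90 emphasis should be removed.
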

Following \cite[Proposition 3.9]{sun2020},  we shall prove the following proposition when $F$ is non-archimedean in this subsection.
\begin{prop}\label{fourier}
	Let $f$ be an $H_{n-1}$-invariant generalized function on $L_{n-1}\oplus V\oplus V^\ast$ such that $f$, its Fourier transform $\mathfrak{F}(f)$ and its partial Fourier transforms $\mathfrak{F}_{V\oplus V^\ast}(f),\mathfrak{F}_{L_{n-1}}(f)$ are all supported on $\mathcal{N}_n$. Then $f=0$.
\end{prop}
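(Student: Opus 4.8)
The plan is to argue by contradiction, following Chen-Sun \cite[Proposition 3.9]{sun2020}. Suppose $f\neq 0$. First I would observe that $\mathcal{N}_n$ is a finite union of $H_{n-1}$-orbits: by Guo's Lemma \ref{lem:guo} every $x$ with $x\bar x$ nilpotent is $H_{n-1}$-conjugate to a Jordan-form matrix, and for each such $x$ the remaining data $(v,v^\ast)$ subject to the defining vanishing relations lies in finitely many orbits of the corresponding stabilizer. Hence $\operatorname{Supp}(f)$, being a closed $H_{n-1}$-invariant subset of $\mathcal{N}_n$, is a finite union of orbit closures, so we may choose an orbit $\mathcal{O}$ that is open in $\operatorname{Supp}(f)$. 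Fix $\mathbf{w}=(\mathbf{e}_x,\mathbf{e}_v,\mathbf{e}_{v^\ast})\in\mathcal{O}$, attach to the nilpotent operator $\mathbf{e}_x\bar{\mathbf{e}}_x$ an $\mathfrak{sl}_2(F)$-triple $\{\mathbf{h},\mathbf{e},\mathbf{f}\}$ as in \eqref{sl}, and let $\gamma\colon F^\times\to H_{n-1}$ be the cocharacter integrating $\mathbf{h}$.

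Next I would carry out a Frobenius/Harish-Chandra descent (Bernstein localization, as in \cite[\S2]{dima2009duke}). After deleting from $L_{n-1}\oplus V\oplus V^\ast$ the smaller orbit closures that occur in $\operatorname{Supp}(f)$, the support of $f$ becomes the closed submanifold $\mathcal{O}$, and the space of $H_{n-1}$-invariant generalized functions so supported carries a finite filtration whose graded pieces are the $\operatorname{Stab}_{H_{n-1}}(\mathbf{w})$-invariant vectors in $\operatorname{Sym}^k N^\ast_{\mathcal{O},\mathbf{w}}$, suitably twisted by densities, for $k\ge 0$. Since the $H_{n-1}$-action preserves the orthogonal decomposition $L_{n-1}\oplus(V\oplus V^\ast)$, the normal space $N_{\mathcal{O},\mathbf{w}}$ inherits both a compatible direct-sum decomposition and a non-degenerate restriction of $\langle-,-\rangle_{\mathfrak{gl}_{2n}(F)}$, so the full and partial Fourier transforms $\mathfrak{F},\mathfrak{F}_{L_{n-1}},\mathfrak{F}_{V\oplus V^\ast}$, which commute with the $H_{n-1}$-action (cf. Lemma \ref{lem:intertwin}), descend to the corresponding transforms on $N_{\mathcal{O},\mathbf{w}}$. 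The hypothesis that $f$ and its three Fourier partners are all supported on $\mathcal{N}_n$ then translates into conditions on the lowest non-vanishing graded datum $\mu$ and on its normal-direction Fourier transforms.

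The heart of the matter is then the Chen-Sun step with the cocharacter $\gamma$. A suitable twist $t\mapsto t^{-c}\gamma(t)$ fixes $\mathbf{w}$ and acts on $N_{\mathcal{O},\mathbf{w}}$ with a strictly negative-graded weight system coming from $\mathbf{h}$; combined with the $\operatorname{Stab}_{H_{n-1}}(\mathbf{w})$-invariance, this upgrades $\mu$ to a section that is homogeneous of a degree pinned down by $k$, the $\mathbf{h}$-weights, and the density twist. On the other hand, the requirement that the normal-direction Fourier transforms of the $\delta$-type distribution along $\mathcal{O}$ determined by $\mu$ again have support in $\mathcal{N}_n$ bounds that same degree from the opposite side. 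As in \cite[Proposition 3.9]{sun2020}, the two constraints are incompatible unless $\mu=0$; descending the filtration then gives that $f$ vanishes near $\mathcal{O}$, contradicting $\mathcal{O}\subset\operatorname{Supp}(f)$, and Proposition \ref{fourier} follows. (When some slice $N_{\mathcal{O},\mathbf{w}}$ is again of the shape $L_{n'-1}\oplus V'\oplus {V'}^{\ast}$ with $n'<n$, one may instead quote the proposition for $n'$, so the argument can equally be set up as an induction on $n$.)

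I expect the main obstacle to be the bookkeeping in the descent: one has to check that all four support conditions on $f$ really do pass to the corresponding conditions on each graded datum $\mu$ — which uses the compatibility of the conormal geometry of $\mathcal{O}$ with the splitting $L_{n-1}\oplus(V\oplus V^\ast)$ and the precise interaction of support with the partial Fourier transforms — and that the homogeneity exponent forced by the $\mathbf{h}$-grading together with the density factors lands exactly outside the range permitted by those support conditions. Making these two computations meet is precisely the content of \cite[Proposition 3.9]{sun2020}, and the adaptation here carries the additional burden of the Galois twist $a\mapsto\bar a$ in the $E/F$-picture and of the particular normalization of the triple in \eqref{sl}.
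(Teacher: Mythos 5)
Your overall strategy (contradiction, $\mathfrak{sl}_2$-triple attached to the nilpotent part, Frobenius descent to a slice, homogeneity versus support constraints, following \cite{sun2020}) is indeed the route the paper takes for non-archimedean $F$, but two of your steps do not stand as written. First, the opening claim that $\mathcal{N}_n$ is a finite union of $H_{n-1}$-orbits is false. Already for $n=3$ take $x=e_{12}\in Mat_{2,2}(F)$ (so $x\bar x=0$), $v=(v_1,0)^t$, $v^\ast=(0,a_2)$: all defining equations of $\mathcal{N}_n$ hold, the stabilizer of $x$ under twisted conjugation consists of the matrices $\begin{pmatrix}p&q\\0&\bar p\end{pmatrix}$ with $p\in E^\times$, $q\in E$, and it acts by $(v_1,a_2)\mapsto(pv_1,p^{-1}a_2)$, so $v_1a_2\in E$ is a continuously varying orbit invariant; hence there are infinitely many orbits (even modulo the $F^\times$-dilation). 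Consequently your selection of an orbit \emph{open} in $\mathrm{Supp}(f)$ is unjustified: a closed $H_{n-1}$-invariant subset of $\mathcal{N}_n$ need not contain a relatively open orbit. The paper never inducts over orbits of triples: only the $x$-component is normalized by Lemma \ref{lem:guo}, while the $(v,v^\ast)$-component is handled in bulk, inside $\mathscr{C}_{E(\mathbf{e})}(V\oplus V^\ast)$, by the Weil-representation computation in the proof of Lemma \ref{lem:key}.

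Second, the contradiction itself is never produced: you say the support conditions ``bound the degree from the opposite side,'' but give no mechanism. In the paper the quantitative inputs are (i) the Aizenbud--Gourevitch homogeneity theorem (Theorem \ref{duke}) applied to the quadratic form $Q(x,v,v^\ast)=tr(x\bar x)+tr_{E/F}v^\ast(\bar v)$, whose zero locus contains $\mathcal{N}_n$ and whose Fourier-stability hypothesis is exactly where the hypotheses on $\mathfrak{F}(f)$ enter, forcing the eigenvalue $|\cdot|^{\frac12\dim_F(L_{n-1}\oplus V\oplus V^\ast)}$; and (ii) Lemma \ref{lem:key} (Frobenius descent plus the metaplectic argument showing $D_t$ acts trivially), giving $\eta^2=|\cdot|^{tr(2-\mathbf{h})|_{L_{n-1}^{\mathbf{f}}}+4n-4}$, which contradicts the purely numerical inequality \eqref{inequa}. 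That inequality needs $n\ge 3$, so a separate base case $n=2$ via \cite[Lemma 6.3.4]{aizenbud2013partial} (this is where the partial Fourier transforms are used) is required; your sketch has no base case. Finally, your finite filtration by $\mathrm{Sym}^k$ of the conormal bundle is vacuous in the $p$-adic case (no transversal derivatives on $\ell$-spaces) and is not finite over $\mathbb{R}$; since the proposition covers $F=\mathbb{R}$, note that the paper abandons this route there entirely and argues via singular supports and non-coisotropicity (Proposition \ref{coisotropic}), a genuinely different mechanism that your proposal does not supply.
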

\begin{rem} 
	We will postpone the proof of Proposition \ref{fourier} when $F=\mathbb{R}$ until the next subsection, which involves the machine of D-modules.
\end{rem}

Let $\mathcal{O}$ be an $H_{n-1}$-orbit in $\mathcal{N}_n$. Pick $(\mathbf{e},v_0,v_0^\ast)\in\mathcal{O}$. Then $\mathbf{e}\in \mathcal{N}$.  Moreover, we may assume that $\mathbf{e}=\begin{pmatrix}
	&x\\ x
\end{pmatrix}$ with $x\in Mat_{n-1,n-1}(F)$ due to Lemma \ref{lem:guo}. Recall that every $\mathbf{e}\in L_{n-1}$
can be extended to an $\mathfrak{sl}_2$-triple $\{\mathbf{h},\mathbf{e},\mathbf{f} \}$
(see \cite[Proposition 4]{kostant}) in the sense that
\begin{equation}\label{sl}
[\mathbf{h},\mathbf{e}]=2\mathbf{e},~[\mathbf{h},\mathbf{f}]=-2\mathbf{f}\mbox{  and  }[\mathbf{e},\mathbf{f}]=\mathbf{h}
\end{equation}
where $\mathbf{f}\in \mathcal{N}$ and $\mathbf{h}\in \mathfrak{h}_{n-1}$, where $\mathfrak{h}_{n-1}=\mathfrak{gl}_{n-1}(E)$
is the Lie algebra of $H_{n-1}$. Furthermore, we may assume that $\mathbf{f},\mathbf{h}\in Mat_{n-1,n-1}(F)$ due to Lemma \ref{lem:guo}. Let $L_{n-1}^\mathbf{f}$ denote the set which consists of elements in $L_{n-1}$
annihilated by $\mathbf{f}$ under the adjoint action of the triple $\{\mathbf{h},\mathbf{e},\mathbf{f} \}$
on $L_{n-1}$. Then
\[L_{n-1}=[\mathfrak{h}_{n-1},\mathbf{e}]\oplus L_{n-1}^\mathbf{f}. \]


Let $F^\ast$ act on $\mathscr{C}(L_{n-1}\oplus V\oplus V^\ast)$ by
\[(t\cdot f)(x,v,v^\ast)=f(t^{-1}x,t^{-1}v,t^{-1}v^\ast) \]
for $t\in F^\times,x\in L_{n-1},v\in V,v^\ast\in V^\ast$ and $f\in\mathscr{C}(L_{n-1}\oplus V\oplus V^\ast)$. The orbit $\mathcal{O}$ is invariant  under dilation and so $F^\times$ acts on $\mathscr{C}_{\mathcal{N}_{n}}(L_{n-1}\oplus V\oplus V^\ast)^{H_{n-1}}$ as well.

Suppose that the $\mathfrak{sl}_2(F)$-triple \eqref{sl} integrates to an algebraic homomorphism
\[\SL_2(F)\longrightarrow \GL_{2n-2}(F) .\]
Denote by $D_t$ the image of $\begin{pmatrix}
t\\&t^{-1}
\end{pmatrix}$ in $H_{n-1}\cap \GL_{2n-2}(F)$. Let $T$ be a closed subgroup in $H_{n-1}\times F^\times$ which fixes the element $\mathbf{e}$.
Note that, T can be described explicitly, i.e.
\[T=\{(D_t,t^{-2})\in H_{n-1}\times F^\times|t\in F^\times \}. \]
Define a quadratic form on $V\oplus V^\ast$ as follows:
\[(v,v^\ast)\mapsto tr_{E/F}(v^\ast(\bar{v})) \]
for $v\in V$ and $v^\ast\in V^\ast$
which induces an $F$-bilinear form $\langle-,-\rangle$ on $(V\oplus V^\ast)\times (V\oplus V^\ast)$.
Define
\[V(\mathbf{e}):=\{(v,v^\ast)\in V\oplus V^\ast|(\mathbf{e},v,v^\ast)\in\mathcal{O}\mbox{ and }\langle h\cdot(v,v^\ast),(v,v^\ast)\rangle=0\mbox{ for all }h\in\langle D_t\rangle \}. \]
The following lemma is similar to \cite[Lemma 3.13]{sun2020}.
\begin{lem}\label{lem:key}
	Let $\eta$ be an eigenvalue for the action of $F^\times$ on $\mathscr{C}_\mathcal{O}(L_{n-1}\oplus V\oplus V^\ast)^{H_{n-1}}$. Let $|-|$ denote the absolute value of $F^\times$. Then
	$\eta^2=|-|^{tr(2-\mathbf{h})|_{L_{n-1}^\mathbf{f}}+4(n-1)}$.
\end{lem}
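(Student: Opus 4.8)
\emph{Proof proposal.} The plan is to follow the proof of \cite[Lemma 3.13]{sun2020}. Fix a base point $p_0=(\mathbf{e},v_0,v_0^\ast)\in\mathcal{O}$ with $(v_0,v_0^\ast)\in V(\mathbf{e})$ and with $\mathbf{e},\mathbf{f}\in Mat_{n-1,n-1}(F)$ as above; write $S_{p_0}=\{g\in H_{n-1}:g\cdot p_0=p_0\}$ and let $\mathfrak{s}_{p_0}$ be its Lie algebra. Working on the open set $(L_{n-1}\oplus V\oplus V^\ast)\setminus\partial\mathcal{O}$, on which the single orbit $\mathcal{O}$ is closed, Frobenius descent for $\ell$-spaces (as in \cite[\S2]{dima2009duke}) identifies $\mathscr{C}_\mathcal{O}(L_{n-1}\oplus V\oplus V^\ast)^{H_{n-1}}$ with the space of $H_{n-1}$-invariant distributions on $\mathcal{O}$ itself; as $F$ is non-archimedean, no normal jets occur, so this space is at most one-dimensional. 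If it vanishes there is nothing to prove; otherwise it is spanned by an invariant distribution $\mu_\mathcal{O}$, and $F^\times$ — acting by dilation and commuting with $H_{n-1}$ — scales $\mu_\mathcal{O}$ by the character $\eta$, which we must identify.

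Next I would transport the action through the torus $T=\{(D_t,t^{-2})\}$. Since $D_t\cdot\mathbf{e}=t^{2}\mathbf{e}$, the element $(D_t,t^{-2})$ fixes $\mathbf{e}$, so $(D_t,t^{-2})\cdot p_0$ lies in $\mathcal{O}$ with the same $L_{n-1}$-coordinate as $p_0$; hence it equals $z_t\cdot p_0$ for some $z_t\in\mathrm{Stab}_{H_{n-1}}(\mathbf{e})$, and $\gamma_t:=z_t^{-1}(D_t,t^{-2})$ fixes $p_0$. Computing $\gamma_t\cdot\mu_\mathcal{O}$ in two ways: on one hand $\gamma_t\cdot\mu_\mathcal{O}=(D_t,t^{-2})\cdot\mu_\mathcal{O}=\eta(t)^{-2}\mu_\mathcal{O}$, because $z_t^{-1}\in H_{n-1}$ acts trivially and the $F^\times$-component of $(D_t,t^{-2})$ is $t^{-2}$; on the other hand, as $\gamma_t$ fixes $p_0$ it scales $\mu_\mathcal{O}$ by the inverse absolute value of the Jacobian of $\gamma_t$ on $T_{p_0}\mathcal{O}\cong\mathfrak{h}_{n-1}/\mathfrak{s}_{p_0}$, and since $\gamma_t$ acts on $\mathfrak{h}_{n-1}$ by $\mathrm{Ad}(z_t^{-1}D_t)$ and $\det(\mathrm{Ad}(g)|_{\mathfrak{gl}_{n-1}(E)})=1$ for every $g\in\GL_{n-1}(E)$, this factor is $|\det(\mathrm{Ad}(z_t^{-1}D_t)|_{\mathfrak{s}_{p_0}})|_F$. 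Comparing, $\eta(t)^{2}=|\det(\mathrm{Ad}(z_t^{-1}D_t)|_{\mathfrak{s}_{p_0}})|_F^{-1}$ (determinants over $F$).

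It then remains to evaluate this determinant, equivalently the Jacobian of $\gamma_t$ on $T_{p_0}\mathcal{O}\subset L_{n-1}\oplus V\oplus V^\ast$. Here the isotropy condition defining $V(\mathbf{e})$ — that $(v_0,v_0^\ast)$ be isotropic for the $\langle D_t\rangle$-action with respect to the twisted quadratic form $tr_{E/F}(v^\ast\bar v)$ on $V\oplus V^\ast$ — is used to control the correction $z_t$ along the $V$- and $V^\ast$-coordinates, so that those directions contribute exactly $4(n-1)=\dim_F(V\oplus V^\ast)$ to the exponent; and since $\mathbf{h}$ acts on $L_{n-1}$ via the (trace-zero) adjoint representation, the decomposition $L_{n-1}=[\mathfrak{h}_{n-1},\mathbf{e}]\oplus L_{n-1}^{\mathbf{f}}$ converts the $L_{n-1}$-contribution into $tr(2-\mathbf{h})|_{L_{n-1}^{\mathbf{f}}}$. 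Altogether $\eta(t)^{2}=|t|_F^{\,tr(2-\mathbf{h})|_{L_{n-1}^{\mathbf{f}}}+4(n-1)}$, as asserted.

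The main obstacle, I expect, is the last step: making the dependence of $z_t$ on $(v_0,v_0^\ast)$ explicit enough to pin down $\det(\mathrm{Ad}(z_t^{-1}D_t)|_{\mathfrak{s}_{p_0}})$ exactly — this is precisely where the isotropy in the definition of $V(\mathbf{e})$ is indispensable — while keeping consistent track of $E$-traces versus $F$-traces throughout. By contrast, extracting the $L_{n-1}$-piece $tr(2-\mathbf{h})|_{L_{n-1}^{\mathbf{f}}}$ from $L_{n-1}=[\mathfrak{h}_{n-1},\mathbf{e}]\oplus L_{n-1}^{\mathbf{f}}$ is a routine finite-dimensional $\mathfrak{sl}_2$-computation, parallel to \cite[Lemma 3.13]{sun2020}.
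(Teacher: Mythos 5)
Your route is genuinely different from the paper's, and in its present form it has two real gaps. The paper never invokes one-dimensionality of invariant distributions on the orbit: it applies the submersion principle of \cite[Lemma 2.7]{jiang2011trans} to the map $(h,\xi,v,v^\ast)\mapsto h\cdot(\mathbf{e}+\xi+v+v^\ast)$, so that $\eta$ is read off from a $T$-eigenvalue on the slice $\mathscr{C}_{\{0\}\times E(\mathbf{e})}(L_{n-1}^{\mathbf{f}}\oplus V\oplus V^\ast)$; the $L_{n-1}^{\mathbf{f}}$-factor contributes $|t|^{tr(\mathbf{h}-2)|_{L_{n-1}^{\mathbf{f}}}}$ by a direct weight computation, and the $(V\oplus V^\ast)$-factor is handled through the Weil representation: the isotropy condition defining $V(\mathbf{e})$ is used to make the unipotent parts of the metaplectic action (the factors $\psi(\langle X_t v,v\rangle)$) trivial on $\mathscr{C}_{V(\mathbf{e})}(V\oplus V^\ast)$, so $D_t$ drops out and only the half-density normalization $|t^{-2}|^{\frac{1}{2}\dim_F(V\oplus V^\ast)}=|t|^{4-4n}$ survives. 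That normalization is precisely where the $+4(n-1)$ in the exponent comes from.

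The first gap in your argument is that the identity $\eta(t)^2=|\det(\mathrm{Ad}(z_t^{-1}D_t)|_{\mathfrak{s}_{p_0}})|^{-1}$ cannot be the whole story: it records only the orbit-internal Jacobian and ignores the normalization by which $F^\times$ acts on $\mathscr{C}$ (generalized functions embedded via a fixed Haar measure, not bare distributions on $\mathcal{O}$), and the exponent in the lemma is sensitive to exactly this. Concretely, your setup applies verbatim to an orbit through a point $p_0=(\mathbf{e},0,0)$, where $(D_t,t^{-2})$ already fixes $p_0$, so $z_t=1$ and your formula returns $|\det(\mathrm{Ad}(D_t)|_{\mathfrak{s}_{p_0}})|^{-1}$, a power of $|t|$ with non-positive exponent; this can never equal $|t|^{tr(2-\mathbf{h})|_{L_{n-1}^{\mathbf{f}}}+4(n-1)}$, so either you must separately exclude such orbits or (more to the point) ambient normalization factors have been dropped from the two-way comparison. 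The second gap is the one you acknowledge: the evaluation of $\det(\mathrm{Ad}(z_t^{-1}D_t)|_{\mathfrak{s}_{p_0}})$ is the entire content of the lemma and is not carried out. The sketch that the isotropy of $(v_0,v_0^\ast)$ ``controls $z_t$'' so that ``the $V$- and $V^\ast$-directions contribute exactly $4(n-1)$'' is an assertion, not an argument, and it is structurally misplaced: $\mathfrak{s}_{p_0}$ sits inside $\mathfrak{h}_{n-1}=\mathfrak{gl}_{n-1}(E)$ and has no $V\oplus V^\ast$-directions, whereas in the paper's proof the isotropy condition plays a different role (trivializing the unipotent metaplectic action) and $4(n-1)$ arises from the Weil-representation factor $|\det A|^{1/2}$, not from a Jacobian along $V\oplus V^\ast$. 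A minor further point: your one-dimensionality step (``no normal jets'') confines the argument to non-archimedean $F$; this is tolerable here, since the lemma feeds only into the non-archimedean proof of Proposition \ref{fourier}, but the paper's slice-plus-Weil-representation argument does not need that restriction.
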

\begin{proof}
	Consider the map
		\begin{equation}
	\label{submersion}
	H_{n-1}\times F^\times\times (L_{n-1}^\mathbf{f}\oplus V\oplus V^\ast)\longrightarrow L_{n-1}\oplus V\oplus V^\ast 
	\end{equation}
	via $(h,\xi,v,v^\ast)\mapsto h.(\mathbf{e}+\xi+v+v^\ast)$ for $\xi\in L_{n-1}^\mathbf{f},h\in H_{n}\times F^\times,v\in V$ and $v^\ast\in V^\ast$, which is
	submersive at every point of $H_{n-1}\times F^\times\times\{(0,v_0,v_0^\ast)\}$. Moreover,  $H_{p,p}\times F^\times\times\{(0,v_0,v_0^\ast)\}$ is open in the inverse image of $\mathcal{O}=(H_{n-1}\times F^\times)\cdot (\mathbf{e},v_0,v_0^\ast)$ under the map \eqref{submersion}. (See \cite[Page 18]{sun2020}.)
	Thanks to \cite[Lemma 2.7]{jiang2011trans},
	the restriction map yields an injective linear map
	\[ \mathscr{C}_{\mathcal{O}}(L_{n-1}\oplus V\oplus V^\ast)^{H_{p,p}\times F^\times,\mathbf{1}\times\eta}\longrightarrow
	\mathscr{C}_{\{0\}\times E(\mathbf{e})}(L_{n-1}^\mathbf{f}\oplus V\oplus V^\ast)^{T,\mathbf{1}\times\eta|_T} \]
	where $\mathbf{1}\times\eta|_{T}((D_t,t^{-2}))=\eta(t)^{-2}$ and
	$$E(\mathbf{e}):=\{(v,v^\ast)\in V\times V^\ast|v^\ast(x_0)^{2k}\bar{v}=0=v^\ast x_0^{2k+1}v\mbox{ for all non-negative integers }k \}$$
	for $\mathbf{e}=\begin{pmatrix}
	&x_0\\ x_0
	\end{pmatrix}\in L_{n-1}$.
	It is easy to see that the representation $\mathscr{C}_{\{0\}}(L_{n-1}^\mathbf{f})$
	of $T$ is complete reducible and every eigenvalue has the form
	\[ (D_t,t^{-2})\mapsto |t|^{tr(\mathbf{h}-2)|_{L_{n-1}^\mathbf{f}}}.\]
	Thus $$\eta(t)^2=|t|^{tr(2-\mathbf{h})|_{L_{n-1}^\mathbf{f}}}\gamma^{-1}(t)$$ for any $t\in F^\times$, where $\gamma$ is an eigenvalue for the action of $T$ on $\mathscr{C}_{E(\mathbf{e})}(V\oplus V^\ast)$.
	In order to compute $\gamma$, we will restrict $\gamma$ to a smaller subspace $\mathscr{C}_{V(\mathbf{e})}(V\oplus V^\ast)$ of $\mathscr{C}_{E(\mathbf{e})}(V\oplus V^\ast)$. 
	\par
	Define a symplectic form on $(V\oplus V^\ast)\times (V\oplus V^\ast)$ as follow
	\[ < (x_1,y_1),(x_2,y_2)>:=\langle x_1,y_2\rangle-\langle y_1,x_2\rangle   \]
	where $x_i,y_i\in V\oplus V^\ast$. Then $V\oplus V^\ast$ is a maximal isotropic subspace.
	Consider the Weil representation of $\mathrm{Mp}_{4n-4}(F)=\mathrm{Mp}((V\oplus V^\ast)\times(V\oplus V^\ast),<-,->)$ on $S(V\oplus V^\ast)$.
	Under the Weil representation $\omega_\psi$,
	\[\begin{cases}
	\omega_\psi\begin{pmatrix}
	A\\& (A^t)^{-1}
	\end{pmatrix}\varphi(x)=|\det A|^{1/2}\varphi(A^{-1}x),&\mbox{ for }A\in\GL_{2n-2}(F),\\
	\omega_\psi\begin{pmatrix}
	\mathbf{1}_{2n-2}&N\\&\mathbf{1}_{2n-2}
	\end{pmatrix}\varphi(x)=\psi(\langle Nx,x\rangle)\varphi(x),&\mbox{ for }N=N^t,
	\end{cases}
	\]
	for $\varphi\in S(V\oplus V^\ast)$ and $x\in V\oplus V^\ast$. We may extend $\omega_\psi$ from the Schwartz  space $S(V\oplus V^\ast)$ to the tempered generalized function space $\mathscr{C}(V\oplus V^\ast)$.
	Note that
	\[\begin{pmatrix}
	X\\& X^{-1}
	\end{pmatrix}=\begin{pmatrix}
	\mathbf{1}_n&-X\\&\mathbf{1}_n
	\end{pmatrix}\begin{pmatrix}
	\mathbf{1}_n\\ X^{-1}&\mathbf{1}_n
	\end{pmatrix}\begin{pmatrix}
	\mathbf{1}_n&1-X\\&\mathbf{1}_n
	\end{pmatrix}\begin{pmatrix}
	\mathbf{1}_n\\-\mathbf{1}_n&\mathbf{1}_n
	\end{pmatrix}\begin{pmatrix}
	\mathbf{1}_n&\mathbf{1}_n
	\\&\mathbf{1}_n	\end{pmatrix}  \]
	holds for any $X\in\GL_{n}(F)$. Here we only need  the case that $X$ is a diagonal matrix. 
	Denote
	$D_t=\begin{pmatrix}
	A_t\\&B_t
	\end{pmatrix}$ and $X_t=\begin{pmatrix}
	A_t\\& A_t^{-1}
	\end{pmatrix}$. 
	Then the action of $D_t$ on $V\oplus V^\ast$ is given by
	$$(v,v^\ast)\mapsto (A_tv,v^\ast A_t^{-1}).$$ 
	It is obvious that
	\begin{equation*}
	\begin{split}\omega_\psi
	\begin{pmatrix}
	\mathbf{1}_{2n-2}& X_t\\&\mathbf{1}_{2n-2}
	\end{pmatrix}f(v,v^\ast)&=\psi(\langle (A_tv,v^\ast A_t^{-1}),(v,v^\ast)\rangle)f(v,v^\ast)
	\\ 
	&=f(v,v^\ast)
	\end{split}
	\end{equation*}
	for any $f\in\mathscr{C}_{V(\mathbf{e})}(V\oplus V^\ast)$. Then
	$\begin{pmatrix}
	\mathbf{1}_{2n-2}&X_t\\ &\mathbf{1}_{2n-2}
	\end{pmatrix}$  acts on $\mathscr{C}_{V(\mathbf{e})}(V\oplus V^\ast)$
	trivially and so is $\begin{pmatrix}
	\mathbf{1}_{2n-2}\\ X_t^{-1}&\mathbf{1}_{2n-2}
	\end{pmatrix}$. 
	Thus $D_t$ does not contribute to $\gamma$. Therefore $\gamma$ has the form
	\[(D_t,t^{-2})\mapsto |t^{-2}|^{\cdot \frac{1}{2}\dim_F(V\oplus V^\ast)}=|t|^{4-4n} \]
	and so $\eta(t)^2=|t|^{tr(2-\mathbf{h})|_{L_{n-1}^\mathbf{f}}+4n-4}$ for any $t\in F^\times$.
\end{proof}
	 Consider $Mat_{n-1,n-1}(F)$ as a representation of the $\mathfrak{sl}_2(F)$-triple \eqref{sl}. Decompose it into irreducible representations
\[Mat_{n-1,n-1}(F)=\oplus_{i=1}^d V_i .\]
Let $\lambda_i$ be the highest weight of $V_i$. Note that $tr(2-\mathbf{h})|_{L_{n-1}^\mathbf{f}}$ is an integer.
\begin{lem}Assume $n\geq3$.
	One has 
	\begin{equation}\label{inequa}
		2(n-1)^2+3 <tr(2-\mathbf{h})|_{L_{n-1}^\mathbf{f}}< 4(n-1)^2
	\end{equation}
\end{lem}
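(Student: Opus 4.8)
The plan is to compute $tr(2-\mathbf{h})|_{L_{n-1}^{\mathbf{f}}}$ exactly and then read off the inequality. Throughout we take $\mathbf{e}\neq 0$ (if $\mathbf{e}=0$ then $\mathbf{h}=\mathbf{f}=0$, $L_{n-1}^{\mathbf{f}}=L_{n-1}$ and the quantity equals $4(n-1)^2$, so that case must be excluded; it does not occur for the orbits $\mathcal{O}$ relevant to Theorem~\ref{key:vanishing}). Write $\mathbf{h}=\begin{pmatrix}h_0&\\&h_0\end{pmatrix}$ and $\mathbf{f}=\begin{pmatrix}&f_0\\ f_0&\end{pmatrix}$ with $h_0,f_0\in\mathfrak{gl}_{n-1}(F)$; a direct block computation shows that $\{h_0,x,f_0\}$ is an $\mathfrak{sl}_2(F)$-triple in $\mathfrak{gl}_{n-1}(F)$ with $x$ in the role of $e$ and $f_0$ in the role of $f$, and $x$ is a nonzero nilpotent of Jordan type $\mathbf{d}=(d_1\geq\cdots\geq d_r)$, $\sum_j d_j=n-1$, $d_1\geq 2$.

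First I would make $L_{n-1}^{\mathbf{f}}$ explicit. For $z=\begin{pmatrix}&b\\ \bar b&\end{pmatrix}\in L_{n-1}$ one has $[\mathbf{f},z]=\begin{pmatrix}f_0\bar b-bf_0&\\& \overline{f_0\bar b-bf_0}\end{pmatrix}$, so $z\in L_{n-1}^{\mathbf{f}}$ iff $bf_0=f_0\bar b$; writing $b=b_0+\delta b_1$ with $b_0,b_1\in\mathfrak{gl}_{n-1}(F)$ this splits into $[b_0,f_0]=0$ and $b_1f_0+f_0b_1=0$. Hence $L_{n-1}^{\mathbf{f}}\cong\mathfrak{z}(f_0)\oplus\mathfrak{a}(f_0)$, where $\mathfrak{z}(f_0)=\ker(\mathrm{ad}\,f_0)$ is the centralizer of $f_0$ and $\mathfrak{a}(f_0)=\{M\in\mathfrak{gl}_{n-1}(F):Mf_0+f_0M=0\}$, and under this identification $\mathrm{ad}(\mathbf{h})$ acts as $\mathrm{ad}(h_0)$ on each summand (since $[\mathbf{h},z]$ corresponds to $b\mapsto[h_0,b]$). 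Now $\mathfrak{z}(f_0)$ is spanned by the lowest weight vectors of $\mathfrak{gl}_{n-1}(F)=\bigoplus_{i=1}^d V_i$, so $\dim\mathfrak{z}(f_0)=d$ and $tr(\mathrm{ad}(h_0))|_{\mathfrak{z}(f_0)}=-\sum_{i=1}^d\lambda_i$. For $\mathfrak{a}(f_0)$, choose $\epsilon\in\GL_{n-1}(F)$ commuting with $h_0$ and satisfying $\epsilon f_0\epsilon^{-1}=-f_0$ (in a Jordan basis adapted to the triple, let $\epsilon$ negate every other basis vector of each string — possible over $F$ because the $\mathfrak{sl}_2$-module $F^{n-1}$ is isomorphic to its twist by $e\mapsto-e$, $f\mapsto-f$, $h\mapsto h$); then $M\mapsto M\epsilon^{-1}$ is a linear bijection $\mathfrak{a}(f_0)\xrightarrow{\sim}\mathfrak{z}(f_0)$ intertwining $\mathrm{ad}(h_0)$, so $\mathfrak{a}(f_0)$ has the same dimension and the same $\mathrm{ad}(h_0)$-weight multiset as $\mathfrak{z}(f_0)$. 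Therefore $\dim L_{n-1}^{\mathbf{f}}=2d$, $tr(\mathbf{h})|_{L_{n-1}^{\mathbf{f}}}=-2\sum_i\lambda_i$, and
\[ tr(2-\mathbf{h})|_{L_{n-1}^{\mathbf{f}}}=2\dim L_{n-1}^{\mathbf{f}}-tr(\mathbf{h})|_{L_{n-1}^{\mathbf{f}}}=4d+2\sum_{i=1}^d\lambda_i. \]

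Next I would express $d$ and $\sum_i\lambda_i$ through $\mathbf{d}$. Decomposing $\mathfrak{gl}_{n-1}(F)=\bigoplus_{j,j'}\Hom(V^{(j)},V^{(j')})$ along the Jordan blocks $V^{(j)}\cong U_{d_j-1}$ of $x$ and using the Clebsch--Gordan rule $\Hom(V^{(j)},V^{(j')})\cong U_{d_j-1}\otimes U_{d_{j'}-1}\cong U_{d_j+d_{j'}-2}\oplus\cdots\oplus U_{|d_j-d_{j'}|}$, one gets $d=\sum_{j,j'}\min(d_j,d_{j'})$ (this is also the dimension of the centralizer of $x$) and, summing the highest weights over each block, $\sum_i\lambda_i=\sum_{j,j'}\bigl(d_jd_{j'}-\min(d_j,d_{j'})\bigr)=(n-1)^2-d$. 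Substituting gives
\[ tr(2-\mathbf{h})|_{L_{n-1}^{\mathbf{f}}}=4d+2\bigl((n-1)^2-d\bigr)=2(n-1)^2+2d. \]
The diagonal terms already give $d\geq\sum_j d_j=n-1$, whence $tr(2-\mathbf{h})|_{L_{n-1}^{\mathbf{f}}}\geq 2(n-1)^2+2(n-1)>2(n-1)^2+3$, since $2(n-1)-3=2n-5>0$ for $n\geq3$. Conversely, writing $d=(n-1)^2-\sum_{j,j'}\min(d_j,d_{j'})\bigl(\max(d_j,d_{j'})-1\bigr)$ and using $d_1\geq2$, the $(1,1)$-term of the subtracted sum is $d_1(d_1-1)\geq2$, so $d\leq(n-1)^2-2$ and $tr(2-\mathbf{h})|_{L_{n-1}^{\mathbf{f}}}\leq 4(n-1)^2-4<4(n-1)^2$. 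This yields \eqref{inequa}.

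The delicate point is the identity $tr(2-\mathbf{h})|_{L_{n-1}^{\mathbf{f}}}=2(n-1)^2+2d$: $L_{n-1}^{\mathbf{f}}$ is not an $\mathfrak{sl}_2$-submodule of $\mathfrak{gl}_{2n-2}(F)$, and splits as $\mathfrak{z}(f_0)\oplus\mathfrak{a}(f_0)$ with the ``anti-centralizer'' summand $\mathfrak{a}(f_0)$ carrying no evident module structure. The auxiliary element $\epsilon$ — equivalently, the elementary fact that an $\mathfrak{sl}_2$-module over a field of characteristic zero is determined by its character and hence is isomorphic to its twist by $e\mapsto-e$, $f\mapsto-f$ — is what transfers the weight count from $\mathfrak{z}(f_0)$ to $\mathfrak{a}(f_0)$; everything after that is bookkeeping with partitions and the standard description of $\dim\ker(\mathrm{ad}\,x)$.
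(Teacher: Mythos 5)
Your argument is correct, and its skeleton coincides with the paper's: decompose $Mat_{n-1,n-1}(F)$ under the triple $(h_0,x,f_0)$ into irreducibles, express the trace through $d$ and $\sum_i\lambda_i$, and use $(n-1)^2=d+\sum_i\lambda_i$. The difference is in what gets justified. The paper merely asserts the identity $tr(2-\mathbf{h})|_{L_{n-1}^{\mathbf{f}}}=2(2d+\sum_i\lambda_i)$ and deduces the lower bound from $d\geq 2$; you actually prove that identity, via the decomposition $L_{n-1}^{\mathbf{f}}\cong\mathfrak{z}(f_0)\oplus\mathfrak{a}(f_0)$ and the auxiliary element $\epsilon$ (commuting with $h_0$, conjugating $f_0$ to $-f_0$) that transports the $\mathrm{ad}(h_0)$-weight multiset from the centralizer to the anticommutant — this is exactly the step the paper leaves implicit, and your verification of it is sound. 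You also treat the upper bound, which the paper's proof never addresses; as you note, it genuinely requires $\mathbf{e}\neq 0$, since for $\mathbf{e}=0$ the quantity equals $4(n-1)^2$, so your hypothesis is needed for the strict inequality as stated. One side remark is inaccurate: orbits with $\mathbf{e}=0$ do occur in $\mathcal{N}_n$ (for instance $(0,v,v^\ast)$ with $v^\ast\bar{v}=0$, or the zero orbit), so they are not excluded from the setting of Theorem \ref{key:vanishing}; this is harmless because only the lower bound of \eqref{inequa} is used in the proof of Proposition \ref{fourier}, and that bound holds for $\mathbf{e}=0$ as well (there $d=(n-1)^2$). Finally, the partition bookkeeping ($d=\sum_{j,j'}\min(d_j,d_{j'})$, $\sum_i\lambda_i=(n-1)^2-d$) is correct but more than is needed for the lower bound, where $d\geq 2$ (or your $d\geq n-1$) already suffices.
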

\begin{proof}
	It is easy to see that $(n-1)^2=\sum_{i=1}^d(\lambda_i+1)=\sum_i\lambda_i+d$. Note that
	\[tr(2-\mathbf{h})|_{L_{n-1}^\mathbf{f}}=2(2d+\sum_i\lambda_i). \]
	Therefore $tr(2-\mathbf{h})|_{L_{n-1}^\mathbf{f}}-2(n-1)^2=2d>3$ due to the fact that $d\geq2$.
\end{proof}
Let $Q$ be a quadratic form on $L_{n-1}\oplus V\oplus V^\ast$ defined by
\[Q(x,v,v^\ast)=tr(x\bar{x})+tr_{E/F}v^\ast(\bar{v}) \]
for $x\in Mat_{n-1,n-1}(E)= L_{n-1},v\in V$ and $v^\ast\in V^\ast$. Denote by $Z(Q)$ the zero locus of $Q$  in $L_{n-1}\oplus V\oplus V^\ast$. Then $\mathcal{N}_{n}\subset Z(Q)\subset L_{n-1}\oplus V\oplus V^\ast$. Recall the following homogeneity result on tempered generalized functions due to Aizenbud-Gourevitch. (See
\cite[Theorem 5.1.5]{dima2009duke}. )
\begin{thm}
	\label{duke}
		Let $I$ be a non-zero subspace of $\mathscr{C}_{Z(Q)}(L_{n-1}\oplus V\oplus V^\ast)$ such that for every $f\in I$, one has that $\mathfrak{F}(f)\in I$ and $(\psi\circ Q)\cdot f\in I$ for all unitary character $\psi$ of $F$. Then $I$ is a completely reducible $F^\times$-subrepresentation of ${\mathscr{C}}(L_{n-1}\oplus V\oplus V^\ast)$, and it has an eigenvalue of the form $	|-|^{\frac{1}{2}\dim_F (L_{n-1}\oplus V\oplus V^\ast)}$.
\end{thm}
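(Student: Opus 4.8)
The theorem is the homogeneity result of Aizenbud--Gourevitch, \cite[Theorem 5.1.5]{dima2009duke}, and the proof I would give follows theirs. Write $W:=L_{n-1}\oplus V\oplus V^\ast$ and $d:=\dim_F W=2(n^2-1)$, which is even. The plan is to recognize the three operations in the hypothesis as a single action of $\SL_2(F)$. Since $d$ is even, the quadratic space $(W,Q)$ carries an honest (non-metaplectic) oscillator representation $\Pi$ of $\SL_2(F)$ on the Schwartz space $S(W)$, which extends to $\mathscr{C}(W)$, and under which the standard upper unipotent $n(t)$ acts by multiplication by $\psi(tQ(\cdot))$, the Weyl element $w$ by the normalized Fourier transform $\mathfrak{F}$ (up to an eighth root of unity), and the diagonal torus $m(a)$ by a Weil-twisted dilation $f\mapsto\chi_Q(a)|a|^{d/2}f(a\cdot)$; in archimedean language $\{Q,\,Q(\partial),\,\mathrm{Euler}+\tfrac d2\}$ is the Bernstein--Sato $\mathfrak{sl}_2$-triple of $Q$, which integrates to $\SL_2(F)$.

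The hypotheses say exactly that $I$ is stable under $\mathfrak{F}=\Pi(w)$ and under every $\Pi(n(t))$, i.e.\ under multiplication by $\psi\circ(tQ)$ as $t$ runs over $F$ (equivalently, by $\psi'\circ Q$ over all unitary characters $\psi'$; the root of unity in $\Pi(w)$ does not affect stability of a subspace). Since $w\,n(t)\,w^{-1}$ is the lower unipotent $\bar n(-t)$ and the two opposite unipotent subgroups generate $\SL_2(F)$, the subspace $I$ is $\Pi(\SL_2(F))$-stable. In particular it is stable under $\Pi(m(a))$, hence under dilation, which already gives the assertion that $I$ is an $F^\times$-subrepresentation of $\mathscr{C}(W)$.

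For the remaining two assertions one exploits the rigidity imposed by the condition $I\subseteq\mathscr{C}_{Z(Q)}(W)$ together with $\mathfrak{F}$-stability. A generalized function $f$ such that both $f$ and $\mathfrak{F}(f)$ are supported on the quadric $Z(Q)$ has finite conormal order along $Z(Q)$, and so does $\mathfrak{F}(f)$; consequently $f$ is annihilated by a large power of multiplication by $Q$ and by a large power of the Fourier-dual operator, so it generates a \emph{finite-dimensional} $\SL_2(F)$-submodule of $\mathscr{C}(W)$ (archimedean case: the corresponding $\mathfrak{sl}_2$-module is killed by powers of $e$ and of $f$; the $p$-adic case is analogous). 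For $F$ $p$-adic such a module is automatically a sum of trivial representations; for $F=\mathbb{R}$ one must in addition check that its highest-weight vectors — those killed by multiplication by $Q$, hence of conormal order zero away from the vertex of the cone — are essentially $\delta$-functions of isotropic subspaces, which are homogeneous of degree exactly $-\tfrac d2$. Combined with the Fourier symmetry $\lambda\mapsto-\lambda-d$ of the homogeneity degrees that can occur — whose unique fixed point is $-\tfrac d2$, and which reflects the simple roots $-1,-\tfrac d2$ of the Bernstein--Sato polynomial $b_Q(s)=(s+1)(s+\tfrac d2)$ (note $d=2(n^2-1)>2$), ruling out logarithmic terms — this forces each such submodule, and hence $I$ itself, to be a direct sum of $\SL_2(F)$-invariant lines sitting at degree $-\tfrac d2$. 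That yields simultaneously the complete reducibility of $I$ over $F^\times$ and the eigencharacter $|-|^{\frac12\dim_F W}$ (the unitary twist $\chi_Q$ being absorbed by the normalizations of \cite{dima2009duke}); for the precise form of this last argument I would refer to \cite[\S5]{dima2009duke}.

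The hard part is this last step: carrying out, inside the tempered/Nash-manifold formalism for $F=\mathbb{R}$ (and the $\ell$-space formalism for $F$ $p$-adic), the finiteness of the $\SL_2(F)$-module generated by such an $f$, the classification of its admissible order-zero vectors, the resulting pinning of the homogeneity degree at $-\tfrac d2$, the separate analysis at the singular vertex of $Z(Q)$, and the Weil-index bookkeeping that matches the eigencharacter to $|-|^{d/2}$ on the nose.
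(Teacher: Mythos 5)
Your proposal takes the same route as the paper, which offers no internal proof of this statement at all: it is simply quoted (in adapted form) from Aizenbud--Gourevitch \cite[Theorem 5.1.5]{dima2009duke}, and your identification of that source, together with the mechanism that multiplication by $\psi(tQ)$ and the normalized Fourier transform $\mathfrak{F}$ generate (a twist of) the oscillator representation of $\SL_2(F)$ on $\mathscr{C}(L_{n-1}\oplus V\oplus V^\ast)$, whose diagonal torus acts by $|a|^{d/2}$-twisted dilations, is exactly the mechanism behind that theorem and already yields the $F^\times$-stability. Your further reconstruction of the internals is rougher than the actual argument --- in particular the claim that an $f$ with $f$ and $\mathfrak{F}(f)$ supported on $Z(Q)$ generates a \emph{finite-dimensional} $\SL_2(F)$-module would require local finiteness of the Euler operator on that module, which you do not establish --- but since you explicitly defer this step to \cite{dima2009duke}, the proposal amounts to the same citation the paper itself relies on and is acceptable as such.
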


Now we are prepared to prove Proposition \ref{fourier} when $F$ is non-archimedean.
\begin{proof}[Proof of Proposition \ref{fourier} when $F$ is non-archimedean]
	Denote by $I$ the space of all tempered generalized functions $f$ on $L_{n-1}\oplus V\oplus V^\ast$ with the properties in Proposition \ref{fourier}. Assume by contradiction that $I$ is nonzero. If $n-1=1$, i.e. $n=2$, then $$\mathcal{N}_n\cong \{0\}\oplus (E\oplus\{0\}\cup \{0\}\oplus E)$$ and so
	Proposition \ref{fourier} follows from \cite[Lemma 6.3.4]{aizenbud2013partial} that if there exists an
	\[f\in\mathscr{C}_{\mathcal{N}_n}( E\oplus E\oplus E )^{E^\times} \]
such that both $\mathfrak{F}_{L_{n-1}}(f)$ and $\mathfrak{F}_{V\oplus V^\ast}(f)$ are supported on $\mathcal{N}_{n}$, then $f=0$.	Here the action of $E^\times$ is given by $$g\cdot(x,v,v^\ast)=(gx\bar{g}^{-1},gv,v^\ast \bar{g}^{-1})$$ where $g\in E^\times,x,v,v^\ast\in E$.
	 Assume that $n\geq3$. 
	Then by Lemma \ref{lem:key} and Theorem \ref{duke}, one has
	\[\dim_F(L_{n-1}\oplus V\oplus V^\ast)=tr(2-\mathbf{h})|_{L_{n-1}^\mathbf{f}}+4n-4 \]
	and so 
	\[tr(2-\mathbf{h})|_{L_{n-1}^{\mathbf{f}}}=2(n-1)^2 \]
	which contradicts the inequality \eqref{inequa}.
	This finishes the proof.
\end{proof}

\subsection{Proof of Proposition \ref{fourier} when $F=\mathbb{R}$} This subsection focuses on the proof of Proposition \ref{fourier} when $F=\mathbb{R}$. We will follow \cite[\S6]{aizenbud2013partial} to prove that $SS(f)$ is not coisotropic for any non-zero tempered generalized function $f$ satisfying the conditions in Proposition \ref{fourier}, which implies that $f$ must be zero.

Recall that $\mathcal{N}=\{x\in Mat_{n-1,n-1}(\mathbb{C})|x\bar{x} \mbox{ is nilpotent} \}$ and
\[\mathcal{N}_n=\{(x,v,v^\ast)\in L_{n-1}\oplus V\oplus V^\ast|x\in\mathcal{N},v^\ast(\bar{x}x)^k\bar{v}=0=v^\ast\bar{x}(x\bar{x})^kv\mbox{ for all non-negative integer }k \}. \]
Define 
$$S=\Big\{((A_1,v_1,v_1^\ast),(A_2,v_2,v_2^\ast))\Big|
\begin{matrix} (A_i,v_j,v_j^\ast)\in \mathcal{N}_n \mbox{ for any }i,j\in\{1,2\}\mbox{ and }\\
\alpha(A_1,v_1,v_1^\ast)\perp (A_2,v_2,v_2^\ast)\mbox{ for any }\alpha\in \mathfrak{gl}_{n-1}(\mathbb{C}) \end{matrix} \Big\}.$$
Note that the orthogonality condition can be replaced by $A_1\bar{A}_2-A_2\bar{A}_1+v_1\bar{v}_2^\ast-\bar{v}_2 v_1^\ast=0$.
Let $\mathcal{O}_1,\mathcal{O}_2\subset\mathcal{N}$ be any two nilpotent orbits. Set
\[U(\mathcal{O}_1,\mathcal{O}_2):=\{((A_1,v_1,v_1^\ast),(A_2,v_2,v_2^\ast))\in S|A_i\in\mathcal{O}_i\mbox{ and }(v_i,v_i^\ast)\notin (V\times\{0\})\cup (\{0\}\times V^\ast) \}. \]

\begin{prop}\label{coisotropic}
	 Let $\mathcal{O}=H_{n-1}\cdot\mathbf{e}$ with $\mathbf{e}$ regular.
	 The set $U(\mathcal{O},\mathcal{O}')$ does not contain any (non-empty) coisotropic subvariety.
\end{prop}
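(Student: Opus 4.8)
The plan is to mimic the argument of \cite[\S6]{aizenbud2013partial} for the symmetric pair $(\GL_n,\GL_p\times\GL_{n-p})$, adapting it to the twisted (Galois-involution) situation. The key structural feature is the fibration of $U(\mathcal{O},\mathcal{O}')$ over pairs of nilpotent orbits in $\mathcal{N}$ coming from the matrix part, together with the vector parts $(v_i,v_i^\ast)$; since $\mathbf{e}$ is regular, the matrix part $x\bar{x}$ is a regular (principal) nilpotent in $Mat_{n-1,n-1}(F)$, so the first factor of any point of $U(\mathcal{O},\mathcal{O}')$ lies in the single dense orbit $\mathcal{O}$, and likewise we will only have to understand which regular-nilpotent orbits $\mathcal{O}'$ can appear in the second slot subject to the orthogonality constraint $A_1\bar A_2-A_2\bar A_1+v_1\bar v_2^\ast-\bar v_2 v_1^\ast=0$. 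First I would fix a representative $\mathbf{e}$ of $\mathcal{O}$ in Jordan form (Lemma \ref{lem:guo}), extend it to an $\mathfrak{sl}_2$-triple $\{\mathbf{h},\mathbf{e},\mathbf{f}\}$ as in \eqref{sl}, and use the stratification of $U(\mathcal{O},\mathcal{O}')$ by the $H_{n-1}$-orbit of $A_1$ to reduce, via Corollary \cite[Corollary 3.0.11]{aizenbud2013partial} (restriction of coisotropic subvarieties along an equivariant fibration), the coisotropy question to the fiber over the point $A_1=\mathbf{e}$.

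The second step is the fiberwise analysis. Over $A_1=\mathbf{e}$ the variety $U(\mathcal{O},\mathcal{O}')$ becomes, up to the $H_{n-1}$-sweep that was already accounted for, a bundle over the pairs $(v_1,v_1^\ast)$ (with $(v_1,v_1^\ast)\notin (V\times\{0\})\cup(\{0\}\times V^\ast)$ and satisfying the nilpotency relations $v_1^\ast(\bar{\mathbf{e}}\mathbf{e})^k\bar v_1=0=v_1^\ast\bar{\mathbf{e}}(\mathbf{e}\bar{\mathbf{e}})^kv_1$), whose fiber is cut out inside $\mathcal{O}'\times V\times V^\ast$ by the linear orthogonality equation in $A_2$. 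Because $\mathbf{e}$ is regular, its centralizer in $Mat_{n-1,n-1}$ is abelian of dimension $n-1$, which makes the orthogonality equation rigid: solving $A_1\bar A_2-A_2\bar A_1=\bar v_2 v_1^\ast-v_1\bar v_2^\ast$ for $A_2$ pins down $A_2$ modulo the centralizer of $\mathbf{e}$, and the requirement that $A_2$ be nilpotent and that $(A_2,v_1,v_1^\ast)$, $(A_2,v_2,v_2^\ast)$ lie in $\mathcal{N}_n$ then forces the fiber to be of small dimension. I would carry out this dimension count by decomposing $V\oplus V^\ast$ into $\mathfrak{sl}_2$-weight spaces under $\mathbf{h}$ and tracking which weight components of $v_i,v_i^\ast$ are allowed; the nilpotency constraints $v^\ast x^{2k}\bar v=0=v^\ast x^{2k+1}v$ that already appeared in Lemma \ref{lem:key} will do most of the bookkeeping. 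The upshot should be that $\dim U(\mathcal{O},\mathcal{O}') < \tfrac12\dim\big(T^\ast(L_{n-1}\oplus V\oplus V^\ast)\big)$ on the relevant stratum, or more precisely that on each stratum the restricted variety has dimension strictly below half the dimension of the ambient cotangent space, which by the defining inequality for coisotropic subvarieties forces the absence of any non-empty coisotropic subvariety.

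The main obstacle I anticipate is the bookkeeping in the fiberwise dimension estimate: one must handle the case where $\mathcal{O}'$ is \emph{also} the regular orbit (so $\dim\mathcal{O}'$ is as large as possible) and simultaneously the vector data $(v_i,v_i^\ast)$ is forced off the ``trivial'' loci $V\times\{0\}$ and $\{0\}\times V^\ast$, which is exactly the configuration that the definition of $U(\mathcal{O},\mathcal{O}')$ isolates and which in \cite{aizenbud2013partial} requires the most delicate argument. Concretely, I expect to need a lemma showing that if $A_1$ is regular nilpotent, $(v_1,v_1^\ast)$ satisfies the $\mathcal{N}_n$-relations and is non-trivial in the above sense, then the orthogonality equation has \emph{no} solution $A_2$ in the regular nilpotent orbit (or only a positive-codimension family of them), so that $U(\mathcal{O},\mathcal{O})$ is either empty or of too-small dimension. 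This is the twisted analogue of \cite[Lemma 6.1.2 and its neighbours]{aizenbud2013partial}; the Galois twist changes the bilinear pairing $\langle v,v^\ast\rangle = \mathrm{tr}_{E/F}(v^\ast\bar v)$ but not the homogeneity or the $\mathfrak{sl}_2$-weight combinatorics, so the same inductive structure on $n$ should go through, with Proposition \ref{fourier} (for smaller $n$) available as the base of the induction if needed.
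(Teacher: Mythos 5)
Your first step --- using \cite[Corollary 3.0.11]{aizenbud2013partial} to reduce the question to the restriction $R_\mathbf{e}=U(\mathcal{O},\mathcal{O}')|_{\{\mathbf{e}\}\times V\times V^\ast}$ inside $T^\ast(V\times V^\ast)$ --- is exactly the paper's reduction. The gap is in your second step. You propose to conclude by showing $\dim R_\mathbf{e}<\tfrac12\dim T^\ast(V\times V^\ast)$ via $\mathfrak{sl}_2$-weight bookkeeping, using only the $\mathcal{N}_n$-relations coupling $v_i$ with $v_i^\ast$ and the solvability of the orthogonality equation in $A_2$ (i.e.\ orthogonality of $v_1\bar v_2^\ast-\bar v_2 v_1^\ast$ to the twisted centralizer of $\mathbf{e}$). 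Those constraints alone cannot yield a strict dimension inequality: for any proper nonzero $\mathbf{e}$-stable subspace $W\subset V$ (e.g.\ a partial kernel of the regular nilpotent), the linear subvariety $\Lambda_W=\{(v_1,v_1^\ast,v_2,v_2^\ast): v_1,v_2\in W,\ v_1^\ast,v_2^\ast\in W^{\perp}\}$ satisfies all of these conditions, its generic points avoid the excluded loci $(V\times\{0\})\cup(\{0\}\times V^\ast)$, it has dimension exactly $\tfrac12\dim T^\ast(V\times V^\ast)$, and the symplectic form vanishes on it, so it is Lagrangian and in particular coisotropic. Hence the only way such strata are excluded from $R_\mathbf{e}$ is through the conditions you defer to an ``anticipated lemma'': the existence of an actual nilpotent $A_2\in\mathcal{O}'$ solving the equation together with the cross conditions $(A_2,v_i,v_i^\ast)\in\mathcal{N}_n$. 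That is not side bookkeeping --- it is the entire content of the proposition, and it is precisely what the paper obtains by first extracting the support conditions $\mathbf{e}^{n-1}v_i=0$ and $v_i^\ast\mathbf{e}^{n-1}=0$ from regularity and then invoking \cite[Lemma 6.4.4]{aizenbud2013partial} together with the relation $\mathbf{e}\bar A-A\bar{\mathbf{e}}+v_1\bar v_2^\ast-\bar v_2 v_1^\ast=0$. A pure ``dimension $<$ half'' argument of the kind you outline would have to be replaced by an argument that these half-dimensional Lagrangian candidates do not embed in $U(\mathcal{O},\mathcal{O}')$, which you do not supply.

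Two further cautions. First, constraining $A_2$ (your proposed lemma that the orthogonality equation has no, or few, regular nilpotent solutions) does not by itself shrink $R_\mathbf{e}$: the restriction $i^\ast$ forgets $A_2$, so conditions on $A_2$ only help insofar as they force additional equations on $(v_1,v_1^\ast,v_2,v_2^\ast)$; also the proposition must cover every orbit $\mathcal{O}'$, not only the regular one. Second, using Proposition \ref{fourier} for smaller $n$ as an ``induction base'' is problematic: it is a statement about tempered generalized functions, not about subvarieties of $T^\ast(V\times V^\ast)$, so it cannot feed into a coisotropy estimate, and in the archimedean case it is itself deduced from the present proposition, so you would risk circularity.
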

\begin{proof}
	It suffices to show that $R_\mathbf{e}:=U(\mathcal{O},\mathcal{O}')|_{\{\mathbf{e}\}\times V\times V^\ast}$ is not $T^\ast(V\times V^\ast)$-coisotropic. It is easy to obtain that $\mathbf{e}^{n-1}v_1=0$. Otherwise $v_1=0$. Similarly $\mathbf{e}^{n-1}v_2$ and $v_i^\ast \mathbf{e}^{n-1}$ are all zero. Thus $R_\mathbf{e}$ is not $T^\ast(V\times V^\ast)$-coisotropic due to \cite[Lemma 6.4.4]{aizenbud2013partial} and the relation $\mathbf{e}\bar{A}-A\bar{\mathbf{e}}+v_1 \bar{v}_2^\ast-\bar{v}_2 v_1^\ast=0$ for $A\in\mathcal{N}$.
\end{proof}

\begin{proof}[Proof of Proposition \ref{fourier} when $F=\mathbb{R}$ ]
	Suppose that $(x,v,v^\ast)\in Supp(f)$. Then we may assume that $x=\mathbf{e}\in Mat_{n-1,n-1}(\mathbb{R})\cap\mathcal{N}$ due to Lemma \ref{lem:guo}. Note that $SS(f)$ is $T^\ast(L_{n-1}\oplus V\oplus V^\ast)$-coisotropic. Since both $f$ and $\mathfrak{F}_{L_{n-1}}(f)$ are $H_{n-1}$-invariant, $\mathbf{e}$
	is $H_{n-1}$-distinguished (see \S2.1.3), i.e. $\mathbf{e}$ is regular nilpotent. Moreover,
	$SS(f)\subset S$. Thanks to Proposition \ref{coisotropic}, 
	\[SS(f)\subset (\mathcal{N}\times \big( (V\times\{0\})\cup (\{0\}\times V^\ast)\big))\times (\mathcal{N} \times \big((V\times\{0\})\cup (\{0\}\times V^\ast)\big)). \]
	Thus $Supp(f)\subset \mathcal{N}\times (V\times\{0\})\cup (\{0\}\times V^\ast)$. Due to \cite[Lemma 6.3.4]{aizenbud2013partial}, $f$ must be zero. This finishes the proof.
\end{proof}
\begin{rem}
	One may follow \cite[\S6]{aizenbud2013partial} to give a uniform proof of Proposition \ref{fourier} which involves more techniques and more notation when $F$ is non-archimedean.
\end{rem}

\subsection{Proof of Theorem \ref{vanishing:I}}
In this subsection, we shall give the proof of Theorem \ref{vanishing:I}.

Let $\theta$ be an involution of $G$ and $H=G^\theta$.
Then $(G,H)$ is a symmetric pair. Consider the action of $H\times H$ on $G$ by left and right translations and the action of $H$ on $L_n$ by conjugation. Let $g\in G(F)$ such that $H(F)gH(F)$ is closed in $G(F)$. Let $x=g\theta(g^{-1})$. Then
$(G_x,H_x,\theta|_{G_x})$ is called a descendant of $(G,H,\theta)$. (See \cite[\S7.2]{dima2009duke}.)
\begin{lem}\cite[Theorem 6.15]{carmeli2015stability}\label{descendant}
	Every descendant of the pair $(\GL_{2n},R_{E/F}\GL_{n})$ is a product of pairs of the form $(R_{L_1/F}\GL_{r},R_{L_2/F}\GL_r)$, $(\GL_r\times\GL_r,\triangle\GL_r)$ and  $(\GL_{2r},R_{E/F}\GL_r)$ for some $r<n$, where $L_2$ is a finite field extension over $F$ and $L_1$ is a quadratic extension of $L_2$.
\end{lem}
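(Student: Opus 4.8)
The plan is to run the general Harish-Chandra descent bookkeeping for symmetric pairs from \cite[\S7]{dima2009duke}, specialised to the Galois form $(\GL_{2n},R_{E/F}\GL_n)$ of Guo \cite{guo1997unique}. Recall that a descendant is the triple $(G_x,(G_x)^\theta,\theta|_{G_x})$ attached to a point $g$ with $H(F)gH(F)$ closed, equivalently to a semisimple element $x=g\theta(g^{-1})$ of the symmetric space $X_n=\{y\in\GL_{2n}:y\theta(y)=1\}$ (closedness of the double coset being equivalent to semisimplicity of $x$). So the task reduces to two steps: (i) compute the centraliser $G_x=\GL_{2n,x}$ together with the restriction of $\theta$, for a semisimple $x$ with $\theta(x)=x^{-1}$; and (ii) recognise the outcome as a product of the three named families.

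The key structural input is that $\theta=\mathrm{Ad}(s)$ with $s=\mathrm{diag}(\delta I_n,-\delta I_n)$, an element of $\GL_{2n}(F)$ (under the embedding into $\GL_{2n}(E)$) with $s^2=\delta^2$ a central scalar. Letting $\GL_{2n}$ act on its standard module $V$, semisimplicity of $x$ gives an isotypic decomposition $V=\bigoplus_\mu V_\mu$ over the monic irreducible factors $\mu$ of the minimal polynomial of $x$, with $V_\mu$ free over the field $F_\mu=F[t]/(\mu)$ (on which $x$ acts by the scalar $t$); correspondingly $G_x=\prod_\mu\GL_{F_\mu}(V_\mu)$. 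Since $\theta(x)=x^{-1}$, one has $s\cdot V_\mu=V_{\mu^{\ast}}$, where $\mu^{\ast}(t)=t^{\deg\mu}\mu(t^{-1})/\mu(0)$ is the reciprocal polynomial, so $\theta$ permutes the factors of $G_x$ by the pairing $\mu\leftrightarrow\mu^{\ast}$; and on a factor with $\mu=\mu^{\ast}$ the operator $s|_{V_\mu}$ conjugates the scalar $t\in F_\mu$ to $t^{-1}$, hence is semilinear over $F_\mu$ relative to the involution $\iota\colon t\mapsto t^{-1}$.

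Three cases then arise. If $\mu=t\mp 1$, then $F_\mu=F$, $\iota$ is trivial, and $s|_{V_\mu}$ is $F$-linear with square $\delta^2\in F^{\times}\setminus(F^\times)^2$; it thus equips $V_\mu$ with an $E$-structure, and $\theta|_{\GL_F(V_\mu)}=\mathrm{Ad}(s|_{V_\mu})$ has fixed points the $E$-linear automorphisms, yielding a factor $(\GL_{2r},R_{E/F}\GL_r)$. If $\mu$ is self-reciprocal of degree $\geq 2$, then $\iota$ is a nontrivial involution of $F_\mu$; with $L_1=F_\mu$ and $L_2=F_\mu^{\langle\iota\rangle}$ (a finite extension of $F$, $L_1/L_2$ quadratic), the $\iota$-semilinear involution $\mathrm{Ad}(s|_{V_\mu})$ of $\GL_{F_\mu}(V_\mu)$ is the Galois involution up to an inner twist killed by Hilbert 90, so its fixed points are the $L_2$-points, yielding a factor $(R_{L_1/F}\GL_r,R_{L_2/F}\GL_r)$. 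If $\mu\neq\mu^{\ast}$, then $\theta$ swaps the two isomorphic factors attached to $\mu$ and $\mu^{\ast}$, contributing a ``group case'' factor $(\GL_r\times\GL_r,\triangle\GL_r)$ (over $F_\mu$, which after Weil restriction is again on the list). Taking the product over all $\mu$, and using Lemma \ref{lem:guo} to normalise the representative $x$, gives the claimed decomposition, the bound $r<n$ for the third type following from non-centrality of a genuine descendant point (a factor $(\GL_{2r},R_{E/F}\GL_r)$ with $r=n$ forces $x=\pm 1$, i.e. the whole pair, which is excluded).

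The step I expect to be the main obstacle is (ii) for the stabilised factors, i.e. pinning down the exact $F$-isomorphism class of the induced involution. Knowing only that $\theta|_{G_x}$ is inner inside $\GL_{2n}$ gives, over $\bar F$, fixed points of shape $\GL_a\times\GL_b$ — not nearly enough; one must track how the specific element $s=\mathrm{diag}(\delta I_n,-\delta I_n)$ sits inside each $x$-isotypic subspace and interacts simultaneously with the residue field $F_\mu$ and with the $E/F$-Galois action, thereby ruling out Hermitian (unitary) descendants and correctly identifying the fields $L_1,L_2$. That analysis, together with the Weil-restriction bookkeeping needed to keep every factor inside the three families, is where the content lies; the remainder is the standard descent formalism of \cite[\S7]{dima2009duke} and the linear algebra of semisimple centralisers.
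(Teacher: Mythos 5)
Your overall skeleton (isotypic decomposition of the standard module under a semisimple $x$ with $\theta(x)=x^{-1}$, the pairing $\mu\leftrightarrow\mu^{\ast}$, and the three cases) is the right bookkeeping, and the cases $\mu=t\mp1$ and $\mu\neq\mu^{\ast}$ are fine. The gap is exactly at the step you flagged and then asserted: in the self-reciprocal case the twist is \emph{not} killed by Hilbert 90. What you have on $V_\mu$ is an $\iota$-semilinear operator $s|_{V_\mu}$ with $s^2=\delta^2$, not a descent datum of square $1$; to rescale it to an involution you need $c\in L_1^\times$ with $c\,\iota(c)=\delta^2$, i.e.\ $\delta^2\in N_{L_1/L_2}(L_1^\times)$. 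Hilbert 90 controls $H^1(\Gal(L_1/L_2),\GL_m)$, but the relevant obstruction lives in $H^1(\Gal(L_1/L_2),\PGL_m)$, i.e.\ it is the Brauer class of the quaternion $L_2$-algebra $D=L_1\langle s\rangle=(L_1/L_2,\delta^2)\subset\mathrm{End}_{L_2}(V_\mu)$. The fixed points of $\mathrm{Ad}(s|_{V_\mu})$ in $\GL_{L_1}(V_\mu)$ are the units of the commutant of $D$: this is $\GL_m(L_2)$ when $D$ splits, but $\GL_{m/2}(D)$ when $D$ is division, and the latter genuinely occurs, so the step cannot be repaired by a cohomological triviality claim.

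Concrete instance ($F=\mathbb{R}$, $E=\mathbb{C}$, $n=2$): realize the pair on $V=\mathbb{H}$ with $s=\lambda_j$ (left multiplication), so $s^2=-1=\delta^2$, $G=\GL_{\mathbb{R}}(\mathbb{H})\cong\GL_4(\mathbb{R})$, $H=\GL_{\mathbb{R}[j]}(\mathbb{H})\cong\GL_2(\mathbb{C})$. Take $y=\lambda_w$ with $w=\cos\alpha+i\sin\alpha$, $\sin\alpha\neq0$: then $y$ is semisimple, $\theta(y)=\lambda_{jwj^{-1}}=y^{-1}$, and $y$ lies in the image of the symmetrization map because $u:=ys=\lambda_{wj}$ satisfies $u^2=-1$, hence $u=gsg^{-1}$ and $y=g\theta(g^{-1})$; semisimplicity of $y$ makes the double coset closed, so this is a bona fide descendant point. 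Its descendant is $G_y=\GL_{\mathbb{R}[i]}(\mathbb{H})\cong\GL_2(\mathbb{C})$ and $H_y=H\cap G_y=$ right multiplications $\cong\mathbb{H}^\times=\GL_1(\mathbb{H})$, i.e.\ the pair $(R_{\mathbb{C}/\mathbb{R}}\GL_2,\GL_1(\mathbb{H}))$, whose second member is not of the form $R_{L_2/F}\GL_r$, $\triangle\GL_r$ or $R_{E/F}\GL_r$ (it is non-abelian with anisotropic derived group). The same happens $p$-adically whenever $(\mathrm{disc}\,L_1,\delta^2)_{L_2}=-1$ (your $\GL_2(L_1)$ versus $D^\times$). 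So your argument, read as a proof of the statement exactly as quoted, breaks here, and no patch of the Hilbert 90 step can work: a correct classification must either include the quaternionic family $(R_{L_1/F}\GL_{2r},R_{L_2/F}\GL_r(D))$ or establish the norm condition $\delta^2\in N_{L_1/L_2}(L_1^\times)$ in the situations considered. Since the paper gives no proof but only cites Carmeli, you should check the precise statement and list of descendants in that source rather than relying on the triviality of the twist.
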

\begin{proof}[Proof of Theorem \ref{key:vanishing}] 
	It is enough to show that
	\[\mathscr{C}_{\mathcal{N}_n}(L_{n-1}\oplus V\oplus V^\ast)^{\tilde{H}_{n-1},\chi}=0. \]
	Pick any nilpotent orbit $\mathcal{O}$ in $\mathcal{N}_n$. 
	Thanks to Lemma \ref{lem:intertwin} and Proposition \ref{fourier}, we have
	\[\mathscr{C}_{\mathcal{O}}(L_{n-1}\oplus V\oplus V^\ast)^{H_{n-1}}=0. \]
	This finishes the proof.
\end{proof}
Finally, we give the proof of Theorem \ref{vanishing:I}.
\begin{proof}[Proof of Theorem \ref{vanishing:I}]
	Recall that $L_n=L_{n-1}\oplus V\oplus V^\ast\oplus E$ and $\tilde{H}_{n-1}$ acts on $E$ trivially.
			 We will show that 
	\begin{equation}\label{cl}
		\mathscr{C}(L_{n-1}\oplus V\oplus V^\ast)^{\tilde{H}_{n-1},\chi}=0. 
	\end{equation}
	Then \eqref{cl} implies $\mathscr{C}(L_n)^{\tilde{H}_{n-1},\chi}=0$
	by Localization Principle (see \cite[Appendix D]{dima2009duke}).

	Applying Theorem \ref{thm2.1} and Lemma \ref{descendant}, we need to consider the contribution to
	\[\mathscr{C}(L_{n-1}\oplus V\oplus V^\ast)^{\tilde{H}_{n-1},\chi} \]
	 from all descendants.
	 It is well-known that
	 \[\mathscr{C}(Mat_{r+1,r+1}(F))^{\widetilde{\GL_r(F)},\chi}=0 \]
	 where $\widetilde{\GL_r(F)}=\GL_r(F)\rtimes\langle\sigma\rangle$ and $\sigma$ acts on $\GL_r(F)$ by
	 \[\sigma\cdot g=(g^t)^{-1} \]
	 for $g\in\GL_r(F)$. (See \cite{AG2010annals} for the non-archimedean case and \cite{AG2009selecta,sunzhu2012annals} for the archimedean case.) Thus the descendants  $(R_{L_1/F}\GL_r,R_{L_2/F}\GL_r)$ and $(\GL_r\times\GL_r,\triangle\GL_r)$ do not contribute to $\mathscr{C}(L_n)^{\tilde{H}_{n-1},\chi}$.
	Therefore it suffices to show that
	\[\mathscr{C}(R(L_{r-1}\oplus V\oplus V^\ast))^{\tilde{H}_{r-1},\chi}=0\Longrightarrow\mathscr{C}(Q(L_{r-1}\oplus V\oplus V^\ast))^{\tilde{H}_{r-1},\chi}=0 \]
	for all $r$.
	 Thus 
	$$\mathscr{C}_{\mathcal{N}_r}(L_{r-1}\oplus V\oplus V^\ast)^{\tilde{H}_{r-1},\chi}=0$$
	(see Theorem \ref{key:vanishing})  implies $\mathscr{C}(L_{n-1}\oplus V\oplus V^\ast)^{\tilde{H}_{n-1},\chi}=0$.
	This finishes the proof.	
\end{proof}

\section{Proof of Theorem \ref{thmA}}\label{sect:proofA}
Following \cite{offen2011,kemarsky}, we will give the proof of Theorem \ref{thmA} in this section.
\subsection{Proof of Theorem \ref{thm:group}}
This subsection focuses on the proof of Theorem \ref{thm:group}.
Define
\[\mathcal{H}_n:=\GL_{n}(E)\times\GL_{n}(E) \]
and $\tilde{\mathcal{H}}_n=\mathcal{H}_n\rtimes\langle\sigma\rangle$, where the action is given by
\[\sigma(g_1,g_2)=((\bar{g}_2^{-1})^t,(\bar{g}_1^{-1})^t) \]
for $g_i\in\GL_n(E)$.
Let $\tilde{\mathcal{H}}_n$ act on $\GL_{2n}(F)$ by
\[(g_1,g_2)\cdot x=g_1xg_2^{-1} \]
and  $\sigma\cdot x=\bar{x}^t$
for $g_i\in\GL_{n}(E)$ and $x\in \GL_{2n}(F)$, which induces an action of $\tilde{\mathcal{H}}_n$ on $\mathscr{C}(\GL_{2n}(F))$. Let $\mathcal{H}_{n,x}$ (resp. $\tilde{\mathcal{H}}_{n,x}$) denote the stabilizer of $x$ in $\mathcal{H}_n$ (resp. $\tilde{\mathcal{H}}_n$).

	\begin{lem}\label{Jac:lem} \cite[Proposition 1.2]{guo1996cjm}
		The double cosets $H_nxH_n$, where
		$$x\theta(x^{-1})=\begin{pmatrix}
		A&0&0&B_A&0&0\\0&-\mathbf{1}_{p}&0&0&0&0\\ 0&0&\mathbf{1}_q&0&0&0\\
		\bar{B}_A&0&0&A&0&0\\
		0&0&0&0&-\mathbf{1}_p&0\\
		0&0&0&0&0&\mathbf{1}_q
		\end{pmatrix},$$
		exhaust all closed orbits
		in $\GL_{2n}(F)$, where $A\in Mat_{\nu,\nu}(F)$ is semisimple without eigenvalues $\pm1$,
		$\nu+p+q=n$,  $\mathbf{1}_p,\mathbf{1}_q,\mathbf{1}_\nu$ are identity matrices and $B_A\in Mat_{\nu,\nu}(E)$ satisfies
		$A^2-\mathbf{1}_{\nu}=B_A\bar{B}_A$  and $AB_A=B_A A$. 
\end{lem}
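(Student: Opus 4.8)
The plan is to recover this from the standard theory of closed orbits on a symmetric variety, following Guo's original argument.

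First I would translate the statement into a statement about orbits on the symmetric variety. Since $\theta$ restricts to the identity on $H_n=\GL_n(E)$, the twisted symmetrization $\tau\colon x\mapsto x\theta(x^{-1})$ satisfies $\tau(h_1xh_2)=h_1\tau(x)h_1^{-1}$ for $h_1,h_2\in H_n$ and is constant on right $H_n$-cosets; one checks directly that $\tau$ induces an injection from $H_n\backslash\GL_{2n}(F)/H_n$ to the set of $H_n$-conjugacy classes inside $X_n=\{g\in\GL_{2n}(F):g\theta(g)=1\}$, with image all of $X_n/\mathrm{conj}$ (the only obstruction lives in $H^1(F,H_n)=H^1(E,\GL_n)=1$). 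Moreover $\tau$ carries closed double cosets to closed conjugacy classes and back. So it suffices to classify the closed $H_n$-orbits on the affine $H_n$-variety $X_n$. Because $H_n$ is reductive and the Jordan decomposition $y=su$ of $y\in X_n$ is respected by $\theta$, the relations $y\theta(y)=1$ force $s,u\in X_n$, and the orbit of $y$ degenerates to that of its semisimple part; hence a closed orbit is exactly the orbit of a semisimple $y\in\GL_{2n}(F)$ with $\theta(y)=y^{-1}$, and I am reduced to listing such $y$ up to $H_n$-conjugacy.

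Fix such a $y$. Using the embedding $\GL_{2n}(F)\hookrightarrow\GL_{2n}(E)$ from the introduction, $\theta$ is conjugation by $\mathrm{diag}(\delta\mathbf 1_n,-\delta\mathbf 1_n)$, i.e. the sign change on the two off-diagonal $n\times n$ blocks, and $H_n$ is its centralizer. Decompose $E^{2n}$ into $y$-eigenspaces; since $y$ is semisimple with $\theta(y)=y^{-1}$, the operator implementing $\theta$ interchanges the $\lambda$- and $\lambda^{-1}$-eigenspaces, compatibly with the $E$-structure and the Galois action. The eigenvalues fixed by $\lambda\mapsto\lambda^{-1}$ are $\pm1$: on the corresponding eigenspaces $y$ acts by $\mathbf 1$ and $-\mathbf 1$, which are $E$-stable of $E$-dimensions $q$ and $p$ (so of $F$-dimension $2q$, $2p$), and there $y$ is rigid, contributing only the discrete data $p,q$ and the scalar blocks $\mathbf 1_q$, $-\mathbf 1_p$. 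On the remaining regular part (eigenvalues $\neq\pm1$) I pair each $V_\lambda$ with $V_{\lambda^{-1}}$, and, using the Jordan-normal-form normalization Lemma~\ref{lem:guo} together with a Galois-descent argument, bring $y$ on this part into the shape $\left(\begin{smallmatrix}A&B_A\\ \bar B_A&A\end{smallmatrix}\right)$ with $A\in Mat_{\nu,\nu}(F)$ semisimple and $\nu+p+q=n$. A one-line block computation of $y\theta(y)$ (the off-diagonal blocks of $\theta(y)$ flip sign) turns the equation $y\theta(y)=1$ into exactly $A^2-\mathbf 1_\nu=B_A\bar B_A$ and, since $\bar A=A$, into $AB_A=B_AA$, while "no eigenvalue $\pm1$" just records that the $\pm1$-isotypic part has been split off.

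It then remains to verify that the elements produced this way are pairwise non-$H_n$-conjugate and that each orbit is closed. Closedness is immediate from semisimplicity of the representative. For distinctness, the $H_n$-conjugacy class of $A$ (equivalently the multiset of eigenvalues $\lambda,\lambda^{-1}\neq\pm1$) together with $p$ and $q$ is patently an invariant of the orbit; for fixed $A$ one shows that $B_A$ is determined up to the residual action of the centralizer of $A$, which reduces to a finiteness/vanishing statement for the Galois cohomology of a torus inside $\GL$. I expect this Galois-cohomological bookkeeping on the regular part — arranging $A$ to be $F$-rational, checking that the datum $B_A$ yields no extra orbits, and keeping the $E$-linear structure aligned with the eigenspace decomposition — to be the only real obstacle; everything else is formal, and the reductions in the first two paragraphs follow the blueprint of \cite{guo1997unique,carmeli2015stability}. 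Since the statement is verbatim \cite[Proposition 1.2]{guo1996cjm}, one may alternatively just quote that reference.
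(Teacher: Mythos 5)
The paper offers no proof of this lemma: it is imported verbatim from Guo (\cite[Proposition 1.2]{guo1996cjm}), so your closing remark --- just quote the reference --- is exactly what the paper does, and your sketch is a reconstruction of Guo's own route (symmetrization $x\mapsto x\theta(x^{-1})$, reduction to closed $H_n$-orbits on $X_n$, semisimple representatives, splitting off the $\pm1$-eigenspaces, normal form $\left(\begin{smallmatrix}A&B_A\\ \bar B_A&A\end{smallmatrix}\right)$). Two caveats if you wanted the sketch to stand alone. First, ``closedness is immediate from semisimplicity'' and the converse degeneration to the semisimple part are Richardson-type facts about $\theta$-semisimple elements of a symmetric variety, not one-liners; they need to be quoted or proved. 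Second, and more seriously, the step you defer as ``Galois-cohomological bookkeeping'' --- showing that on the regular part $y$ can be put in the stated form with $A$ an $F$-rational semisimple matrix and that $B_A$, subject to the norm-type equation $A^2-\mathbf{1}_\nu=B_A\bar B_A$ with $AB_A=B_AA$, contributes no further invariants beyond the class of $A$ and the integers $p,q$ --- is precisely the substantive content of Guo's Proposition 1.2 (a Hilbert~90 argument for the torus $F[A]^\times$ inside the centralizer), so the sketch leaves the core of the proof unproved. Also note that the surjectivity of the symmetrization map onto $X_n(F)$ via $H^1(F,H_n)=H^1(E,\GL_n)=1$ requires first knowing that the relevant twisted-conjugation action is transitive geometrically (equivalently, that every $x\in X_n$ gives $x\epsilon$ with the correct eigenvalue multiplicities, which over $F$ follows from irreducibility of $t^2-\delta^2$); with that said, the torsor argument you invoke is fine.
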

\begin{lem}\label{lem4.2}
	One has $\mathscr{C}(\GL_{2n}(F))^{P_E\times\GL_n(E)}=\mathscr{C}(\GL_{2n}(F))^{\GL_n(E)\times\GL_n(E) }$.
\end{lem}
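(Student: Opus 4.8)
The plan is to prove Lemma \ref{lem4.2} by a Gelfand--Kazhdan style argument on $\GL_{2n}(F)$ combined with the Harish-Chandra descent machinery (Theorem \ref{thm:duke}) and the Lie algebra result Theorem \ref{vanishing:I}. Concretely, it suffices to show that every $\tilde{\mathcal{H}}_n$-invariant tempered generalized function on $\GL_{2n}(F)$ that is $\mathcal{H}_n$-invariant and $P_E\times\GL_n(E)$-invariant is automatically $\chi$-semiinvariant becomes the statement: any $P_E\times\GL_n(E)$-invariant distribution is $\GL_n(E)\times\GL_n(E)$-invariant. Since $P_E$ and its transpose $P_E^t$ generate $\GL_n(E)$, it is enough to prove that any $\mathcal{H}_n$-invariant element of $\mathscr{C}(\GL_{2n}(F))$ which is $(P_E\times\GL_n(E))$-invariant is also invariant under the extra transpose-type symmetry coming from $\sigma$; in other words, reduce to showing $\mathscr{C}(\GL_{2n}(F))^{\tilde{\mathcal{H}}_{n-1},\chi}=0$ where $\tilde{\mathcal{H}}_{n-1}=(\GL_{n-1}(E)\times\GL_n(E))\rtimes\langle\sigma\rangle$ or the appropriate smaller group, in the spirit of \cite{offen2011,kemarsky}.

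First I would set up the two-sided action of $\tilde{\mathcal{H}}_n$ on $G=\GL_{2n}(F)$ as in the excerpt and observe that, by the argument that $P_E,P_E^t$ generate $\GL_n(E)$ together with Bernstein's localization principle, the claim reduces to a vanishing statement $\mathscr{C}(G)^{K,\chi}=0$ for a suitable open subgroup $K\subset\tilde{\mathcal{H}}_n$ and the sign character $\chi$. Then I would apply Theorem \ref{thm:duke}: for each $g$ with $\mathcal{H}_n g\mathcal{H}_n$ closed in $G$, with $x=g\theta(g^{-1})$, one must check the corresponding vanishing on the normal space $N$ to the orbit at $x$. By Lemma \ref{Jac:lem} these closed orbits are parametrized by semisimple $A$ without eigenvalues $\pm1$ together with the block data $(p,q,B_A)$, and by Lemma \ref{descendant}/the descent formalism the pair $(G_x,\mathcal{H}_{n,x},\theta|_{G_x})$ decomposes as a product of pairs of the three listed types: $(R_{L_1/F}\GL_r,R_{L_2/F}\GL_r)$, $(\GL_r\times\GL_r,\triangle\GL_r)$, and $(\GL_{2r},R_{E/F}\GL_r)$. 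For the first two types the relevant space is $\mathrm{Mat}_{r,r}$ (or its $E$-version) with the $\widetilde{\GL_r}$-action, and these contribute nothing by the classical $\GL$-pair results (\cite{AG2010annals,AG2009selecta,sunzhu2012annals}). The genuinely new factor is the third type $(\GL_{2r},R_{E/F}\GL_r)$ with $r<n$; by linearization its normal space is exactly (a twist of) $L_r$, and the required vanishing on $L_r$ is precisely Theorem \ref{thmB}/Theorem \ref{vanishing:I}, which we may invoke inductively. This closes the induction and yields Lemma \ref{lem4.2}.

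The main obstacle I anticipate is the bookkeeping in the descent step: one must verify carefully that the slice representation at each closed orbit $\mathcal{H}_n x$ really is the product of tangent spaces predicted by Lemma \ref{descendant}, that the character $\chi$ restricts correctly to each factor (so that on the $\GL_r\times\GL_r$ and $R_{L_1/F}\GL_r$ factors one lands in the hypotheses of the known $\GL$-distinction theorems, and on the $(\GL_{2r},R_{E/F}\GL_r)$ factor one gets exactly the hypothesis of Theorem \ref{vanishing:I}), and that the linearization identifies the normal bundle with $L_r$ compatibly with the $\sigma$-action. A secondary point is to handle the ``regular'' closed orbit (the one with $\nu=0$, $x\theta(x^{-1})$ central-ish) carefully, since there the stabilizer is all of $\GL_n(E)$ and one must use the Lie-algebra statement in full strength rather than a proper descendant; here the $\SL_2$-triple analysis and the homogeneity argument (Lemma \ref{lem:key}, Theorem \ref{duke}) already done for $L_n$ give what is needed, so no circularity arises provided the induction is organized with the rank as the induction parameter.

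With Lemma \ref{lem4.2} in hand, Theorem \ref{thm:group} follows by translating the two-sided statement on $\GL_{2n}(F)$ into the one-sided statement on $X_n=\GL_{2n}(F)/\GL_n(E)$ via the identification $X_n=\{g:g\theta(g)=1\}$ and the map $g\mapsto$ its class, under which the residual $P_E\times\GL_n(E)$-invariance becomes $(\GL_n(E)\cap P)$-invariance on $X_n$ and $\mathcal{H}_n$-invariance becomes $\GL_n(E)$-invariance; the $\sigma$-symmetry becomes the transpose action on $X_n$ described in the introduction, so $\mathscr{D}(X_n)^{\GL_n(E)\cap P}=\mathscr{D}(X_n)^{\GL_n(E)}$ as claimed.
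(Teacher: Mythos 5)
Your proposal is correct and follows essentially the same route as the paper: reduce, via the generation of $\GL_n(E)$ by $P_E$ and $P_E^t$ and the identification $\mathscr{C}(\GL_{2n}(F))^{K\times\GL_n(E)}\cong\mathscr{C}(\GL_{2n}(F)/\GL_n(E))^{K}$, to the vanishing $\mathscr{C}(\GL_{2n}(F)/\GL_n(E))^{\tilde{H}_{n-1},\chi}=0$, then apply Theorem \ref{thm:duke} together with the classification of closed double cosets (Lemma \ref{Jac:lem}), identify each slice representation, and dispose of the factors by the classical $\GL$-pair results and Theorem \ref{vanishing:I}. The only (inessential) deviation is your inductive framing: in the paper no induction is needed at the group level, since Theorem \ref{vanishing:I} has already been established for all ranks in Section 3, so the slice vanishing at every closed orbit, including the one with stabilizer all of $\GL_n(E)$, is quoted directly.
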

\begin{proof} Note that $\mathscr{C}(\GL_{2n}(F))^{K\times \GL_{n}(E)}\cong \mathscr{C}(\GL_{2n}(F)/\GL_{n}(E))^K$ for any subgroup $K$ of $\GL_{2n}(F)$ (see \cite[Lemma 3.7]{kemarsky}). Thus it  suffices to show that any element in $\mathscr{C}(\GL_{2n}(F)/\GL_{n}(E))^{P_E}$
	is invariant under transposition. Indeed, we shall prove that
	\[\mathscr{C}(\GL_{2n}(F)/\GL_{n}(E))^{\tilde{H}_{n-1},\chi}=0. \]
	Applying Theorem \ref{thm:duke}, it is enough to show that
	\begin{equation}\label{normal} \mathscr{C}(N_{H_nxH_n,x}^{\GL_{2n}(F)})^{\tilde{\mathcal{H}}_{n,x}\cap\tilde{H}_{n-1},\chi}=0
\end{equation}
	for any closed orbit $H_nxH_n$ in $\GL_{2n}(F)$. Note that if $x\in H_n$, then 
	$N_{H_nxH_n,x}^{\GL_{2n}(F)}\cong L_n$ and $\mathcal{H}_{n,x}\cong \GL_{n}(E),\mathcal{H}_{n,x}\cap (H_{n-1}\times\GL_{n}(E))\cong H_{n-1}$. Then \eqref{normal} follows from Theorem \ref{vanishing:I}. According to Lemma \ref{Jac:lem}, we separate the proof into two cases.
	\begin{enumerate}
		\item 
	If $x=\begin{pmatrix}
&&	\mathbf{1}_p\\&\mathbf{1}_q\\\mathbf{1}_p\\ &&&\mathbf{1}_q
	\end{pmatrix}$ with $p+q=n$, then 
	$$N_{H_nxH_n,x}^{\GL_{2n}(F)}\cong \frac{\mathfrak{gl}_{2n}(F)}{\mathfrak{gl}_n(E)+ Ad_x\mathfrak{gl}_n(E)}\cong Mat_{p,p}(E)\oplus L_q,$$ $\mathcal{H}_{n,x}\cong \GL_p(E)\times\GL_q(E)$ and
	\[\mathcal{H}_{n,x}\cap (H_{n-1}\times\GL_{n}(E))\cong \GL_p(E)\times H_{q-1}. \]
	The action of $H_{n,x}$ on $Mat_{p,p}(E)\oplus Mat_{q,q}(E)$ is given by
	\[(g_1,g_2)\cdot (x,y)=(\bar{g}_1xg_1^{-1},g_2y\bar{g}_2^{-1}) \]
	for $g_1\in\GL_p(E),g_2\in\GL_q(E)$, $x\in Mat_{p,p}(E)$ and $y\in Mat_{q,q}(E)$. Thus \eqref{normal} follows from Theorem \ref{vanishing:I}. 
	\item If $x$ satisfies 
	\[x\theta(x^{-1})=\begin{pmatrix}
	A&&B_A\\ &\mathbf{1}_{n-\nu}\\ \bar{B}_A&&A\\&&&\mathbf{1}_{n-\nu}
	\end{pmatrix}, \]
	then we may assume that $A$ is a scalar and $A^2\neq\mathbf{1}_\nu$. It is easy to see that
	$N_{H_nxH_n,x}^{\GL_{2n}(F)}\cong Mat_{\nu,\nu}(F)\oplus Mat_{n-\nu,n-\nu}(E),\mathcal{H}_{n,x}\cong \GL_\nu(F)\times \GL_{n-\nu}(E)$ and
	\[\mathcal{H}_{n,x}\cap(H_{n-1}\times\GL_{n}(E))\cong \GL_\nu(F)\times H_{n-1-\nu} \]
	The action of $H_{n,x}$ on $Mat_{\nu,\nu}(F)\oplus Mat_{n-\nu,n-\nu}(E)$ is given by
	\[(g_1,g_2)\cdot (x,y)=(g_1xg_1^{-1},g_2y\bar{g}_2^{-1}) \]
	for $g_1\in\GL_\nu(F),g_2\in\GL_{n-\nu}(E),x\in Mat_{\nu,\nu}(F)$ and $y\in Mat_{n-\nu,n-\nu}(E)$.
	In a similar way, \eqref{normal} holds.
\end{enumerate}
This finishes the proof.
\end{proof}
\begin{proof}[Proof of Theorem \ref{thm:group}] Recall that $X_n=\GL_{2n}(F)/\GL_n(E)$.
	From the proof of Lemma \ref{lem4.2}, we obtain that
	\begin{equation} \label{equal:C}
	\mathscr{C}(X_n)^{\tilde{H}_{n-1},\chi}=0 \end{equation}
		From a general principle of \say{distribution versus Schwartz distribution} (see \cite[Theorem 4.0.2]{dima2009duke}), the equality \eqref{equal:C} implies
		\begin{equation}
		\label{equal:D}
		\mathscr{D}(X_n)^{\tilde{H}_{n-1},\chi}=0.
		\end{equation}
			Note that $H_{n-1}\subset P_E$ and that the mirabolic subgroup $P_E$ and its transpose $P_E^t$ generate $\GL_n(E)$.
		Thus one has $\mathscr{D}(X_n)^{P_E}=\mathscr{D}(X_n)^{\GL_{n}(E)}$. This finishes the proof.
\end{proof}
\subsection{Proof of Theorem \ref{thmA}}\label{subsect:A}
This subsection focuses on the proof of Theorem \ref{thmA}. Let us recall the following lemma appearing in \cite{offen2011,kemarsky}.
\begin{lem}\cite[Corollary 3.5]{kemarsky}\label{ker:cor}
	Let $\pi$ be an irreducible smooth admissible representation of $\GL_{2n}(F)$. Let
	$\pi^\vee$ (resp. $\pi^\ast$) be the contragredient (reps. linear dual) of $\pi$. Then there exists an injective morphism from
	$\pi^\ast\otimes(\pi^\vee)^\ast$  to the space  $\mathscr{D}(\GL_{2n}(F))$ consisting of all distributions on $\GL_{2n}(F)$ as $\GL_{2n}(F)\times\GL_{2n}(F)$-modules, denoted
	one of those injective morphisms by $A_\pi$. 
\end{lem}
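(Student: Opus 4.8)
Write $G:=\GL_{2n}(F)$. The plan is to realise $A_\pi$ as the transpose of the ``$\pi$-matrix block'' map $\Phi\colon C_c^\infty(G)\to\pi\otimes\pi^\vee$, $f\mapsto\pi(f)=\int_{G}f(g)\,\pi(g)\,dg$, and to deduce injectivity of $A_\pi$ from surjectivity (resp.\ density of the image) of $\Phi$. Here $G\times G$ acts on $C_c^\infty(G)$ and on $\mathscr{D}(G)$ by two-sided translations, and on $\pi\otimes\pi^\vee$ (resp.\ on $\pi^\ast\otimes(\pi^\vee)^\ast$) through $\pi(g_1)\otimes\pi^\vee(g_2)$ (resp.\ the contragredient action); since $G$ is unimodular, the identity $\pi((g_1,g_2)f)=\pi(g_1)\,\pi(f)\,\pi(g_2^{-1})$ shows immediately that $\Phi$, hence its transpose, is $G\times G$-equivariant, so all the content lies in the algebra and analysis underlying $\Phi$.

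When $F$ is non-archimedean I would argue as follows. If $f\in C_c^\infty(G)$ is bi-invariant under a compact open subgroup $K$, then $\pi(f)=\pi(e_K)\,\pi(f)\,\pi(e_K)$, so $\pi(f)$ has image in the finite-dimensional space $\pi^K$ and kills the complement of $\pi^K$; using the admissibility identification $(\pi^\vee)^K\cong(\pi^K)^\ast$ it is thus an element of $\pi^K\otimes(\pi^\vee)^K\subset\pi\otimes\pi^\vee$ under $v\otimes\tilde v\mapsto\big(w\mapsto\langle w,\tilde v\rangle v\big)$. Because $\pi$ is irreducible, $\pi^K$ is a simple finite-dimensional module over $e_K\ast C_c^\infty(G)\ast e_K$, so the Jacobson density theorem gives $\Phi\big(e_K\ast C_c^\infty(G)\ast e_K\big)=\mathrm{End}_{\mathbb C}(\pi^K)=\pi^K\otimes(\pi^\vee)^K$; letting $K$ shrink over a neighbourhood basis of $1$ yields $\Phi\big(C_c^\infty(G)\big)=\pi\otimes\pi^\vee$. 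I then set $\langle A_\pi(\lambda\otimes\mu),f\rangle:=(\lambda\otimes\mu)\big(\pi(f)\big)$ via the canonical pairing of $\pi^\ast\otimes(\pi^\vee)^\ast$ with $\pi\otimes\pi^\vee$ and extend linearly; this is plainly a linear functional on $C_c^\infty(G)$, i.e.\ a distribution. If $A_\pi(\ell)=0$ then $\ell$ annihilates all of $\pi\otimes\pi^\vee=\Phi(C_c^\infty(G))$, and since the canonical map $\pi^\ast\otimes(\pi^\vee)^\ast\to(\pi\otimes\pi^\vee)^\ast$ is injective (test against the rank-one tensors $v\otimes\tilde v$), we get $\ell=0$. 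Thus $A_\pi$ is an injective morphism of $G\times G$-modules; this is the non-archimedean analogue of the construction in \cite{offen2011}.

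For $F=\mathbb{R}$ the same strategy applies, but now $\pi(f)$ is only nuclear, not of finite rank, so $\pi\otimes\pi^\vee$ must be replaced by the projective completed tensor product $\pi\,\widehat{\otimes}\,\pi^\vee$ of the Casselman--Wallach representations $\pi$ and $\pi^\vee$; one checks that $f\mapsto\pi(f)$ is a continuous map $C_c^\infty(G)\to\pi\,\widehat{\otimes}\,\pi^\vee$ with dense image (its image contains the dense subspace of finite-rank matrix blocks on the $K$-finite vectors), so transposing gives an injection of the continuous dual of $\pi\,\widehat{\otimes}\,\pi^\vee$ into $\mathscr{D}(G)$, and one pairs $\pi^\ast\otimes(\pi^\vee)^\ast$ with $\pi\,\widehat{\otimes}\,\pi^\vee$ after noting --- e.g.\ by automatic continuity, so that $\pi^\ast$ may be read as the continuous dual --- that the relevant functionals are continuous. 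I expect this archimedean bookkeeping (nuclearity of $\pi(f)$, the correct topological tensor product, density of the image of $\Phi$, automatic continuity) to be the only genuine obstacle, the non-archimedean case being essentially elementary; it is precisely the content of \cite[\S3]{kemarsky}, to which the statement ultimately reduces.
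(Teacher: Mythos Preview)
The paper does not actually prove this lemma: it is merely quoted from \cite{offen2011,kemarsky} (the sentence preceding it reads ``Let us recall the following lemma appearing in \cite{offen2011,kemarsky}''), and no argument is given beyond the citation. Your proposal is therefore not competing with a proof in the paper; it is supplying one, and what you wrote is essentially the standard construction underlying \cite[Corollary~3.5]{kemarsky} and \cite[\S3]{offen2011}: transpose the matrix--coefficient map $f\mapsto\pi(f)$ and use density/surjectivity of its image to get injectivity of $A_\pi$. The non-archimedean part (Jacobson density on each $\pi^K$, then pass to the limit over $K$) is correct and complete.

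One caveat worth flagging in the archimedean paragraph: as stated in the paper, $\pi^\ast$ is the \emph{linear} dual, not the continuous dual, so your transpose argument a priori only produces an injection from the continuous dual tensor product, and the phrase ``by automatic continuity'' does not apply to arbitrary elements of $\pi^\ast\otimes(\pi^\vee)^\ast$. For the intended application (the proof of Theorem~\ref{thmA}) this is harmless, because the functionals $\mu$ and $\lambda$ actually used are invariant under $P_E$ or $\GL_n(E)$ and hence continuous on a Casselman--Wallach representation; but if you want the lemma exactly as written you should either interpret $\pi^\ast$ as the continuous dual from the outset (which is what \cite{kemarsky} does) or note explicitly that the injectivity claim, tested on rank-one tensors $v\otimes\tilde v$, only uses the algebraic pairing and therefore goes through for the full linear dual as well.
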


Now we are ready to give a proof of Theorem \ref{thmA}.
\begin{proof}[Proof of Theorem \ref{thmA}]
	Note that $$\mathscr{D}(X_n)^K\cong\mathscr{D}(\GL_{2n}(F)/\GL_{n}(E))^K\cong\mathscr{D}(\GL_{2n}(F))^{K\times\GL_n(E) }$$ for any subgroup $K$ of $\GL_{n}(E)$. Thus
Theorem \ref{thm:group} implies that
	\begin{equation}\label{equ:dist}
	\mathscr{D}(\GL_{2n}(F))^{P_E\times\GL_{n}(E) }=\mathscr{D}(\GL_{2n}(F))^{\GL_{n}(E)\times\GL_n(E) }. \end{equation}
	Let $\pi$ be a $\GL_n(E)$-distinguished representation of $\GL_{2n}(F)$. Then its contragredient representation $\pi^\vee$ is also $\GL_{n}(E)$-distinguished. Denote by $\pi^\ast$ the linear dual of $\pi$. Take two non-zero linear forms  $\mu\in(\pi^\ast)^{P_E}$ and
	 $\lambda\in ((\pi^\vee)^\ast)^{\GL_{n}(E)}$ . Then Lemma \ref{ker:cor} implies
	\[0\neq A_\pi(\mu\otimes\lambda)\in\mathscr{D}(\GL_{2n}(F))^{P_E\times\GL_{n}(E)} \]
which	is $\GL_n(E)\times\GL_n(E)$-invariant as well by the identity \eqref{equ:dist}. Since $A_\pi$ is injective due to Lemma \ref{ker:cor}, $\mu\otimes\lambda\in(\pi^\ast\otimes(\pi^\vee)^\ast )^{\GL_{n}(E)\times\GL_{n}(E) }$. Therefore $\mu\in(\pi^\ast)^{\GL_{n}(E)}$. This finishes the proof of Theorem \ref{thmA}.
\end{proof}

\subsection*{Acknowlegement} The author would like to thank Dmitry Gourevitch for his careful reading and useful comments for the first version. 
This work was partially supported by
the ERC, StG grant number 637912 and ISF grant 249/17.

\bibliographystyle{amsalpha}
\bibliography{mira}

\end{document}